   \def\sH{{\mathfrak H}}   
   \def\sK{{\mathfrak K}}   \def\sL{{\mathfrak L}}
\def\sM{{\mathfrak M}}   \def\sN{{\mathfrak N}}
      \def\dC{{\mathbb C}}
   \def\dN{{\mathbb N}}   
      \def\dR{{\mathbb R}}
\def\cA{{\mathcal A}}   \def\cB{{\mathcal B}}   
\def\cD{{\mathcal D}}      \def\cF{{\mathcal F}}
   \def\cH{{\mathcal H}}   
   \def\cK{{\mathcal K}}   
\def\cM{{\mathcal M}}   \def\cN{{\mathcal N}}   
\def\cS{{\mathcal S}}
\def\bA{{\mathbf A}}   \def\bB{{\mathbf B}}
\def\dim{{\rm dim\,}}
\def\ran{{\rm ran\,}}
\def\dom{{\rm dom\,}}
\def\codim{{\rm codim\,}}
\def\lspan{{\rm span\,}}
\def\uphar{{\upharpoonright\,}}
\def\f{\varphi}
\newtheorem{theorem}{Theorem}[section]
\newtheorem{proposition}[theorem]{Proposition}
\newtheorem{corollary}[theorem]{Corollary}
\newtheorem{open}[theorem]{Open problem}
\newtheorem{example}[theorem]{Example}
\numberwithin{equation}{section}
\theoremstyle{remark}
\newtheorem{remark}{Remark}[section]
\def\RE{{\rm Re\,}}
\def\IM{{\rm Im\,}}
\def\wt{\widetilde}
\def\wh{\widehat}
\newcommand\I{{\rm i}}
\newcommand\e{{\rm e}}
\def\sa{\mathfrak a}
\def\sb{\mathfrak b}
\newcommand\bz{\overline{z}}
\newcommand\D{{\rm D}_x}
\begin{document}
\title
[Everything is possible for {\small $\dom T\cap\dom T^*$}]
{Everything is possible for \\ the domain intersection {\large \boldmath $\dom T\cap\dom T^*$}}
\author[Yury Arlinski\u{\i}]{Yury Arlinski\u{\i}}
\address{Volodymyr Dahl East Ukrainian National University, 
Deparment of Mathematics, 
pr.\ Central 59-A,
Severo\-donetsk, 93400, Ukraine}
\email{yury.arlinskii@gmail.com}

\author[Christiane Tretter]{Christiane Tretter}
\address{Mathematisches Institut, Universit\"{a}t Bern,
Sidlerstr.\ 5, 3012, Bern, Switzerland} \email{tretter@math.unibe.ch} \dedicatory{}
\subjclass[2010]{Primary 47A05, 47B44; Secondary 47A20, 47A12}
\keywords{Accretive operator, sectorial operator, numerical range, domain intersection}

\thanks{C.\  Tretter gratefully acknowledges funding of the Swiss National Science Foundation (SNF), grant no.\  $169104$.
\hspace*{3.5mm}
Y.\ Arlinski\u{\i} thanks the Mathematical Institute at the University of Bern for funding and kind hospitality.}

\vskip 1truecm
\thispagestyle{empty}

\date{\today}

\dedicatory{To the memory of our colleague and academic teacher Prof.\ Dr.\ Reinhard Mennicken}

\begin{abstract}
This paper shows that for the domain intersection $\dom T\cap\dom T^*$ of a closed linear operator and its Hilbert space adjoint
everything is possible for very common classes of operators with non-empty resolvent set. Apart from the most striking case of a maximal
sectorial operator with $\dom T\cap\dom T^*\!=\!\{0\}$, we construct classes of operators for which $\dim(\dom T\cap\dom T^*)\!=\! n \!\in\! \dN_0$;
$\dim(\dom T\cap\dom T^*)\!= \infty$ and at the same time $\codim(\dom T\cap\dom T^*)\!=\!\infty$; and $\codim(\dom T\cap\dom T^*)\!= n \!\in\! \dN_0$;
the latter includes~the case that $\dom T\cap\dom T^*$ is dense but no core of $T$ and $T^*$ and the case $\dom T\!=\!\dom T^*$ for non-normal $T$. We also show that all these possibilities may occur for operators $T$ with non-empty resolvent set such that either $W(T)\!=\!\dC$, $T$ is maximal accretive but not sectorial, or $T$ is even maximal sectorial. Moreover, in  all but one subcase $T$ can be chosen with compact 
resolvent.
\end{abstract}

\maketitle



\section{Introduction}

Questions on the intersection of the domains of two unbounded operators have seen several surprising answers, including
von Neumann's theorem \cite{MR1581206} and Kato's square root problem \cite{MR0138005}, \cite{MR0151868}.
While the former establishes the existence of unbounded selfadjoint operators $A$ and $B$
in a complex infinite dimensional separable Hilbert space such that the extreme case $\dom A \cap \dom B \!=\! \{0\}$ holds,
the latter concerns the other extreme case whether $\dom A^{1/2} \!=\! \dom {A^{1/2}}^*$ for maximal accretive or even maximal sectorial operators~$A$.  Kato \cite{MR0138005,MR0151868} had shown that if $A$ is maximal accretive, then $\dom A^{\alpha} \!=\! \dom {A^{\alpha}}^*$ for $\alpha \!\in\! [0,\frac 12)$, whereas inequality may hold for $\alpha \!\in\! (\frac 12,1]$, and $\dom A^{1/2} \cap \dom {A^{1/2}}^*$ is a core of both $A^{1/2}$ and ${A^{1/2}}^*\!$, without any assump\-tions on the relation between $\dom A$ and $\dom A^*$.
Shortly after, Lions \cite{MR0152878} established two sufficient conditions for equality and showed that Kato's maximal accretive example for inequality for $\alpha \in (\frac 12,1]$ also satisfies $\dom A^{1/2} \ne \dom {A^{1/2}}^*$. Later independent counter-examples of McIntosh \cite{MR0290169} and of Gomilko \cite{MR939523} showed that even for maximal sectorial (also called regularly accretive) operators the domain inequality $\dom A^{1/2}
\ne\dom {A^{1/2}}^*$ may hold.

In this paper we consider the more general problem of the ``size'' of the domain intersection $\dom T\cap\dom T^*$ for arbitrary closed linear operators $T$ with non-empty resolvent set. In fact, the domain intersection $\dom T \cap\dom T^*$ is an interesting indicator
to assess the deviation of an unbounded non-selfadjoint operator $T$ from symmetric or normal operators $S$ since for the latter, by definition, $\dom S \subset \dom S^*$  and hence $\dom S \cap\dom S^* = \dom S$ is always dense. So a natural question to ask is: \emph{Does
the ``size'' of $\dom T\cap\dom T^*$ depend on properties of $T$ such as being maximal sectorial, maximal accretive or having numerical range $W(T)=\dC$ equal to the entire complex plane}?

Domain intersections $\dom T \cap\dom T^*$ play an important role in the theory of unbounded non-selfadjoint operators. Examples include the density problems for domains of self commu\-tators and anticommutators $T^*T \pm TT^*$ \cite{MR257789}, \cite{MR1974034}, \cite{MR2138701}, \cite{MR3405900}, defining a real and imaginary part of a closed operator in terms of the operator sums $\frac 12 (T\!+\!T^*)$, $\frac 1{2\I} (T\!-\!T^*)$ \cite{MR0367708},
the relation between relative boundedness resp.\
compactness and relative form-boundedness resp.\ form-compactness \cite{MR2501173},
an unbounded version \cite{MR3162253} of Hildebrandt's theorem \cite{MR0200725} on extremal boundary points of the numerical ran\-ge,
and equivalent descriptions of the essential numerical range \cite{MR4083777}. However, so far there~are no results on $\dom T \cap\dom T^*\!$ beyond obvious cases where $\dom T \cap\dom T^*\!=\dom T$ is automatically dense by~definition such as unbounded symmetric $T$ or ``almost'' normal~$T$. Here the latter refers to Putnam \cite{MR338818} (rather than to ``almost normal'' in the sense of \cite{MR3310942})
who used this term in the bounded case to summarize quasinormal, subnormal, hyponormal, seminormal etc.\ operators,
which have been studied in the unbounded case as well
\cite{MR3192032}, 
\cite{MR1018684}, 
\cite{MR1028066}, \cite{MR1307601}, 
\cite{MR257789}. 

In this paper we give a whole series of unexpected answers to the question highlighted above which may be briefly put as `\emph{everything is possible for $\dom T \cap \dom T^*$}'\!,
even for nice classes of operators such as maximal sectorial operators~$T$. Our main results show that the extreme case $\dom T\cap\dom T^*=\{0\}$, the case of arbitrary
finite dimension $\dim(\dom T\cap\dom T^*)\!=\! n \in \dN_0$,
the case of both infinite dimension and infinite codimension $\dim(\dom T\cap\dom T^*)\!= \infty$ and $\codim(\dom T\cap\dom T^*)\!=\!\infty$,
the case of arbitrary finite codimension $\codim(\dom T\cap\dom T^*)\!= n \in \dN$, and the case that $\dom T\cap\dom T^*$ is dense but neither a core of $T$ nor of $T^*$ may prevail.
At the same time we prove that all these possibilities may occur for very different classes of operators with non-empty resolvent set,
such as maximal sectorial operators, maximal accretive operators that are not sectorial, and operators with maximal numerical range $W(T)=\dC$.
Moreover, we show that it is possible to choose $T$ with compact resolvent in all but one subcase.
Our constructions do not only yield particular examples or counter-examples, they rather provide classes of operators for which $\dom T\cap\dom T^*$ exhibits these unexpected phenomena.

There are only a few existing results on the relation between the domains $\dom T$ and $\dom T^*$, and almost all are restricted to the cases that $\dom T=\dom T^*$ and $\dom T\cap\dom T^*$ is a core of $T$. In \cite{MR2828331} and \cite{MR1383821},
for some classes of quasi-selfadjoint extensions $T$ of a non-densely defined symmetric operator,
the domain equality $\dom T=\dom T^*$ was established.  In \cite[Prop.\ 3.5, 3.2]{MR760619}
it was shown that for a closed densely defined operator $T$ with $\dom T^2=\dom T$,
which necessitates that $W(T)=\dC$, the intersection $\dom T\cap\dom T^*$ is a core of $T$.
In \cite{MR2897729} and \cite{MR4083777} examples of operators with codim $(\dom T\cap\dom T^*) =1$ were given which arise as singular perturbations of selfadjoint operators.

In this paper we  completely unfold the much richer picture that may arise.
We provide abstract constructions of classes of densely defined operators $T$
in an infinite dimensional, for most cases separable complex Hilbert space
for every possible combination of the three operator types
\\[1mm]
\begin{tabular}{rl}
(I) &$T$ maximal sectorial, \\
(II) &$T$ maximal accretive but not sectorial, \\
(III) &$W(T)=\dC$, $\rho(T)\ne\emptyset$,
\end{tabular}
\\[1mm]
with the following seven possible phenomena for $\dom T\cap\dom T^*$:
\\[1mm]
\begin{tabular}{rl}
(1) &$\dom T\cap\dom T^*\!=\!\{0\}$,\\
(2) &$\dim(\dom T\cap\dom T^*) =\!n$ for arbitrary $n\!\in\!\dN$,\\
(3) &
    $
    \dim(\dom T\cap\dom T^*)\!=\!\infty, \, 
		\codim(\dom T\cap\dom T^*)\!=\!\infty$,
		\\
(4) &$\codim(\dom T\cap\dom T^*) =\!n$ for arbitrary $n\!\in\!\dN$,\\
(5) &$\dom T\!\nsubseteq\!\dom T^*,\ \dom T\cap\dom T^*\;\mbox{dense, but not a core of } T$,\\
(6) &$
\dom T\!\nsubseteq\!\dom T^*,\ \dom T\cap\dom T^*\;\mbox{core of } T$,\\
\ \ (7) &$\dom T=\dom T^*$,  \, \mbox{but $T$ non-normal},
\end{tabular}
\pagebreak

Our main tools include
von Neumann's theorem \cite[Satz 18]{MR1581206}, stated in Theorem \ref{vnthm} below,  and its equivalent formulations \cite{MR3612999},  the representation of closed sectorial sesquilinear forms and operators associated with them \cite{MR1335452}, special classes of maximal accretive extensions of closed sectorial operators \cite{MR1772627},
as well as non-symmetric singular perturbations of selfadjoint operators \cite{MR3496031}.
Note that Kato's result \cite[Thm.\ 5.1]{MR0151868}
yields a class of maximal sectorial $T$ such that $\dom T \cap \dom T^*$ is a core of $T$ and $T^*$ (case (I.6) above), namely
$T\!=\!A^{1/2}$ with $A$ maximal~accretive.

The paper is organized as follows. Section \ref{sec:prelim} contains some preliminaries, including von~Neu\-mann's theorem.
In Section~\ref{sec:max-sect-dim=n} we present maximal sectorial operators for which $\dom T\cap\dom T^*\!=\!\{0\}$ (case (I.1) above) and
$\dim(\dom T\cap\dom T^*) \!=\!n \!\in\! \dN$ (case (I.2)~above). In Section \ref{sect.infinite} we study maximal sectorial operators $T$
such that $\codim(\dom T\cap\dom T^*)\!=\!\infty$  and $\dom T\cap\dom T^*$ is a proper closed subspace of finite or infinite codimension (case (I.3) above). In Section~\ref{sec:extensions-S_z} we consider families $S_z$, $\RE z\le 0,$ of maximal accretive (and possibly maximal sectorial) extensions of a non-negative densely defined symmetric operator $S$ and prove  that
\begin{itemize}
\item[$+$] if $\RE z\!<\!0$ and $\IM z\!\ne\!0,$ then $\dom S_z\cap\dom S^*_z$ is dense and no core of $S_z$ and~$S^*_z$,
\item[$+$] if $z=a<0$, then $\dom S_a\cap\dom S^*_a$  is dense in $(S-aI)\dom S =\sN_a^\perp$,
\item[$+$] if $z=\I x,$ $x\in\dR\setminus\{0\},$ then 
$\dom S_{\I x}\cap\dom S^*_{\I x}=\sN_{\I x}\oplus (S+\I x I)\dom S^2$,
\end{itemize}
where $\sN_z =\ker (S^*-zI)$ is the defect subspace of $S$ corresponding to  $z \!\in\! \dC$.
Varying the properties of $\sN_a$, $\sN_{\I x}$ and $\dom S^2$ by the choice of $S$, we obtain a whole series of answers
for maximal sectorial and maximal accretive $T$ (cases (I.3), (I.4), (I.5), (II.3), (II.4), (II.5) above).
In Section~\ref{section:singperturb}, by means of singular perturbations of an unbounded selfadjoint operator,
we construct classes of operators for which $\codim(\dom T\cap\dom T^*) =\!n \!\in\! \dN$ and $W(T)\!=\!\dC$ (case (III.4) above).
In Section~\ref{sec:equality}, following an approach in \cite{MR627724}, we derive maximal
accretive and maximal sectorial operators whose domain coincides with that of its adjoint, $\dom T=\dom T^*$ (cases (I.7) and (II.7) above).
In Section~\ref{sec:additive-perturb} we establish stability results which allow us to construct 1) maximal accretive non-sectorial operators
from maximal sectorial operators and  2) operators with full numerical range $W(T)\!=\!\dC$ from sectorial or accretive operators,
but preserving all seven possible cases of of the domain intersection (cases (II.2), (II.3), (II.6) and (III.1)--(III.7), respectively, above).
In Section \ref{sec:more-ex}, by means of a family of fundamental symmetries $J(z)$, $z\!\in\!\dC$, which is continuous in the operator-norm topology, we construct families of closed densely defined operators $\wh T (z)$, $z\in\dC$, possessing the properties $\dom \wh T(z)\cap\dom \wh T^*(z)\!=\!\{0\}$, $W(\wh T(z))\!=\!W(\wh T(z)^*)=\dC$ for all $z\!\in\!\dC$.
Moreover,  we show that all possible phenomena for $\dom T\cap\dom T^*$ occur even for operators of all the above three classes (I), (II), (III) with compact resolvent
except for one case;  if $\dom T\cap \dom T^*$ is an infinite dimensional closed subspace, then $T$ can never have compact resolvent by the closed graph theorem.  Finally, in Section \ref{last} we construct various holomorphic operator families of type (A) and (B), e.g.\ of the form $\Psi(z) =
A^* (I+T(z))A$ with $A$ maximal sectorial and associated semigroup $T(z)$, for which $\dom \Psi(z) \cap \dom \Psi(z)^*$ may either be dense or~$\{0\}$.

{\bf Notations.}
We use the symbols $\dom T$, $\ran T$, $\ker T$ for
the domain, the range, and the kernel of a linear operator $T$;
the closures of $\dom T$, $\ran T$ are denoted by $\overline{\dom T}$,
$\overline{\ran T}$, respectively. The identity operator in a Hilbert space
$\sH$ is denoted by  $I$ and sometimes by $I_\sH$. If $\sL \subset \sH$ is a
closed subspace, the orthogonal projection in $\sH$ onto $\sL$ is denoted by~$P_\sL.$ The notation
$T\uphar \cN$ means the restriction of a linear operator $T$ to a subspace
$\cN\subset\dom T$. The resolvent set of $T$ is denoted by $\rho(T)$.
The space of bounded linear operators acting between Hilbert spaces $\sH$ and $\sK$ is denoted
by $\bB(\sH,\sK)$ and the Banach algebra $\bB(\sH,\sH)$ by
$\bB(\sH)$.  Finally, $\dC$ and $\dR$ denote the set of complex and real numbers, respectively, $\dR_+:=[0,+\infty)$,
and $\dN$ is the set of natural numbers, $\dN_0:=\dN\cup\{0\}$.
In order to avoid confusion with closures, for $\Omega \subset \dC$, we denote by $\Omega^*:=\{\overline z \in \dC: z \in \Omega\}$
the complex conjugate of $\Omega$.

\section{Preliminaries and von Neumann's theorem}
\label{sec:prelim}

In this section we present some
operator theoretic ingredients which we use the in following such as von Neumann's theorem on domain intersections and several
equivalent formulations.

\smallskip

\emph{Preliminaries.} \
The \emph{numerical range} is the tool by means of which we classify the different classes of operators we consider.
For a linear operator $T$ with domain $\dom T$ in a Hilbert space $\sH$ with scalar product $(\cdot,\cdot)$ it is given by
\[
  W(T)=\left\{(Tu,u):u\in\dom T, \,\|u\|=1\right\}.
\]
As is well-known \cite[Thm.~V.3.2]{MR1335452}, the numerical range is a convex set and has the spectral inclusion property
$\sigma_{\rm p}(T)\subset W(T)$ and $\sigma_{\rm app}(T) \subset \overline{W(T)}$ for the point spectrum and approximate point spectrum of $T$.
Moreover, the range $\ran (T-\lambda I)$ is closed for every $\lambda \in \dC\setminus \overline{W(T)}$ and its dimension is locally constant;
this implies that $\sigma(T) \subset \overline{W(T)}$ if each of the (at most two) components of $\dC\setminus \overline{W(T)}$ contains a point
of the resolvent set $\rho(T)$. In this case, the numerical range provides the resolvent \vspace{-1mm} estimate
\begin{equation}
\label{nr-res-est}
  \|(T-\lambda I)^{-1}\| \le \frac 1 {{\rm dist}\,(\lambda, W(T))}, \quad \lambda \in \dC \setminus \overline{W(T)}.
\vspace{-1mm}
\end{equation}

A linear operator $\cA$ in a Hilbert space $\sH$ is called \textit{accretive}
if its numerical range lies in the closed right half-plane
\[
  W(\cA) \subset \overline{\dC_+} := \{ z\in\dC: \RE z \ge 0\},
\]
i.e.\ $\RE(\cA u,u)\ge 0$ for all $u\in\dom \cA$.
An accretive operator $\cA$ is called \textit{maximal accretive}, or
\textit{$m$-accretive} for short, if one of the following equivalent conditions is satisfied:
\begin{enumerate}
\def\labelenumi{\rm (\roman{enumi})}
\item $\cA$ has no proper accretive extensions in $\sH$;
\item $\cA$ is densely defined and $\ran (\cA-\lambda I)=\sH$ for some $\lambda\in \dC$ with $\RE\lambda<0$;
\item $\cA$ is densely defined and closed, and $\cA^*$ is accretive;
\item $-\cA$ generates contractive one-parameter semigroup $T(t)=\exp(-t\cA)$, $t\ge 0$.
\end{enumerate}
Besides, if $\cA$ is maximal accretive, then $\ker \cA=\ker\cA^*$ and hence
\begin{equation}
\label{kernel-accr}
   \ker \cA\subseteq\dom\cA\cap\dom\cA^*.
\end{equation}
A linear operator $\cA$ in a Hilbert space $\sH$ is called \textit{dissipative}, or \textit{$m$-dissipative} for short, if
$-\I \cA$ is accretive or $m$-accretive, respectively; in this case the numerical range of $\cA$ is contained in
the closed upper half-plane, $W(\cA)\subset \{ z\in\dC: \IM z \ge 0\}$.

The resolvent set $\rho(A)$ of an $m$-accretive operator contains the open left half-plane
$\dC_- \!:=\! \left\{ z\!\in\! \dC: \right.$ $\left. \RE z < 0\right\}$ and, by \eqref{nr-res-est},
\[
  \|(\cA-\lambda I)^{-1}\|\le\cfrac{1}{|\RE \lambda|},  \quad \RE \lambda<0.
\]
An accretive operator $\cA$ is called \emph{coercive} if there exists $m>0$ with $\RE (\cA f,f)\ge m\|f\|^2$ for all $f\in\dom \cA$.

A linear operator $\cA$ in a Hilbert space $\sH$ is called \emph{sectorial} with vertex $z=0$ and
semi-angle $\alpha \in [0,\pi/2)$, or \emph{$\alpha$-sectorial} for short, if its numerical range is contained in a
closed sector with semi-angle~$\alpha$,
\begin{equation}
\label{ctrnjhf}
  W(\cA) \subset \overline{\mathcal{S}(\alpha)}:=\left\{z\in\dC:|\arg z|\le \alpha\right\}
\end{equation}
or, equivalently, $|\IM (\cA u,u)|\!\le\! \tan\alpha \,\RE(\cA u,u)$ for all $u\!\in\!\dom \cA$.
Clearly, a sectorial operator is accretive; it is called \textit{maximal sectorial}, or \textit{$m$-$\alpha$-sectorial} for short, if
it is $m$-accretive.

The resolvent set of an $m$-$\alpha$-sectorial operator $\cA$ contains the
set $\dC\setminus \overline{\mathcal{S}(\alpha)}$ and, by \eqref{nr-res-est},
\[
 \|(\cA-\lambda I)^{-1}\| \le \cfrac{1}{{\rm dist}\left(\lambda,\mathcal{S}(\alpha)\right)},
 \quad \lambda\in \dC\setminus \overline{\mathcal{S}(\alpha)}.
\]

We mention that if
$\cA$ is $m$-accretive, then for each $\gamma\!\in\!(0,1)$ the fractional powers $\cA^\gamma$ are defined \cite{MR1335452}, \cite{MR0151868}.
The operators $\cA^{\gamma}$ are $m$-sectorial with semi-angle $\gamma\pi/2$ and, if $\gamma\!\in\! (0,1/2)$, then
$\dom \cA^{\gamma}\!=\! \dom \cA^{*\gamma}$\!.  It was proved in \cite[Thm.\ 5.1]{MR0151868} that, if $\cA $ is $m$-sectorial, then
$\dom \cA^{1/2}\!\cap\!\dom \cA^{*1/2}$ is a core of both $\cA^{1/2}$  and $ \cA^{*1/2}$
and the real part $\RE \cA^{1/2}\!:=\!(\cA^{1/2}\!+\!\cA^{*1/2})/2 $ defined on $\dom \cA^{1/2}\!\cap\!\dom \cA^{*1/2}$ is a selfadjoint operator.
Further,~by~\cite[Cor.~2]{MR0151868},
\begin{equation}
\label{ravhalf}
\dom \cA=\dom \cA^* \implies \dom \cA^{1/2}=\dom \cA^{*1/2}=\dom \cA^{1/2}_R = \cD[\sa],
\end{equation}
where $\sa$ is the closed form associated with the sectorial operator $\cA$ via the first representation theorem \cite[Sect.~VI.2.1]{MR1335452}
and $\cA_R$ is the non-negative selfadjoint operator associated with the real part of ${\mathfrak a}$ given by
${\rm Re}\, {\mathfrak a} := ({\mathfrak a} + {\mathfrak a}^*)/2$.

\smallskip

\emph{Von Neumann's theorem.} \
One of the main ingredients for our constructions is the following well-known theorem of J.~von Neumann \cite[Satz 18]{MR1581206}:

\begin{theorem}[of von Neumann]
\label{vnthm}
\textit{If $A$ is an unbounded selfadjoint operator in a separable infinite dimensional complex Hilbert space, then there exists a unitary operator $U$ such that $U\dom A\cap\dom A=\{0\}$, i.e.\ the domains of the two unitarily equivalent selfadjoint operators $A$ and $A'=U^{-1}AU$ have trivial intersection}.
\end{theorem}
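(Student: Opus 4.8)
\emph{The plan} is to exploit the freedom in the choice of $U$ in order to first put $A$ into a convenient form. Observe that if $V$ is unitary and $A=VA_0V^{-1}$ for some selfadjoint $A_0$, then $\dom A=V\dom A_0$, so a unitary $U_0$ with $U_0\dom A_0\cap\dom A_0=\{0\}$ yields $U:=VU_0V^{-1}$ with $U\dom A\cap\dom A=V(U_0\dom A_0\cap\dom A_0)=\{0\}$; hence we may replace $A$ by any unitarily equivalent operator. Using the spectral theorem I would split $\sH$ into mutually orthogonal $A$-reducing subspaces: one subspace $\sH_{\rm b}=E(\{|\lambda|<C\})\sH$ carrying the bounded part of the spectrum (possibly infinite dimensional), together with countably many nonzero pieces $\sH_k=E(\{c_k\le|\lambda|<c_{k+1}\})\sH$ on which $c_k\|\xi\|\le\|A\xi\|\le c_{k+1}\|\xi\|$, where $c_k\uparrow\infty$ is a rapidly increasing sequence; infinitely many such pieces are nonzero \emph{precisely because} $A$ is unbounded, and one passes to that subsequence. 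Collecting an orthonormal basis of each piece into a single orthonormal basis $(e_i)_{i\in\dN}$ of all of $\sH$ and assigning to $e_i$ the weight $s_i:=c_{k(i)}$ on the unbounded pieces and $s_i:=1$ on $\sH_{\rm b}$, one obtains the equivalence $x\in\dom A\iff\sum_i s_i^2|x_i|^2<\infty$, with $s_i\ge 1$ and $\sup_i s_i=\infty$. Thus it suffices to find a unitary $U$ on this weighted sequence model with $UD\cap D=\{0\}$, where $D:=\{x:\sum_i s_i^2|x_i|^2<\infty\}$.

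\emph{The construction.} The decisive point is that $U$ must be \emph{globally} delocalizing. Indeed, the dense subspace $\lspan\{e_i\}$ of finitely supported vectors is always contained in $D$, so any $U$ mapping finitely supported vectors to finitely supported vectors (block-diagonal unitaries, permutations, shifts) gives $U\lspan\{e_i\}\subseteq UD\cap D$ and hence fails. I would therefore seek $U$ whose image basis $f_i:=Ue_i$ has a matrix $\langle Ue_i,e_j\rangle$ with tails spread over arbitrarily high-weight indices $j$, tuned (e.g.\ with geometrically decaying coefficients reaching into ever higher blocks $\sH_k$) so that the two membership conditions $x\in D$ and $Ux\in D$ become incompatible: the first forces the coefficients $x_i$ to decay like $s_i^{-1}$, while the second, since each $f_i$ already lives partly at scales far above $s_i$, forces a strictly faster decay; carried through all scales this over-determination should leave only $x=0$. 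The verification reduces to a summability estimate for $\sum_j s_j^2\,\big|\sum_i x_i\,\langle Ue_i,e_j\rangle\big|^2$.

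\emph{The main obstacle} is exactly this last step: designing one unitary that is simultaneously incompatible with the scale structure of $|A|$ at \emph{every} scale, and then proving that no nonzero vector survives in both weighted domains. The reduction guarantees that continuous spectrum causes no extra trouble, since no eigenvectors are used, only the orthogonal scale decomposition, while the possibly infinite-dimensional bounded part $\sH_{\rm b}$ is harmlessly absorbed as weight-$1$ coordinates. A conceptually cleaner, though non-constructive, alternative that I would keep in reserve is a Baire-category argument: the unitary group equipped with the strong operator topology is a Polish group, hence a Baire space, and one shows that $\{U:UD\cap D\ne\{0\}\}$ is meager, so that a generic $U$ already does the job; there the difficulty migrates to proving that this exceptional set is a countable union of nowhere-dense sets.
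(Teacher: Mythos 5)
There is a genuine gap, and it sits exactly where the entire content of von Neumann's theorem lies: the unitary $U$ is never constructed. What you do carry out --- the spectral reduction to a weighted sequence model $D=\{x:\sum_i s_i^2|x_i|^2<\infty\}$, and the observation that any unitary preserving the span of finitely supported vectors (permutations, block-diagonal unitaries, shifts) cannot work --- is correct but is the routine part of the argument. (One small inaccuracy there: with ``rapidly increasing'' $c_k$ you only get the inclusion $\dom A\subseteq D$, not the claimed equivalence, since the comparability constant on the $k$-th block is $c_{k+1}/c_k$; this is harmless, because $UD\cap D=\{0\}$ still yields $U\dom A\cap\dom A=\{0\}$, or one can take $c_k=2^k$ to make the norms comparable.) The decisive step --- exhibiting one unitary whose matrix tails defeat the weight structure at every scale simultaneously, and then proving the summability estimate showing that $x\in D$ and $Ux\in D$ force $x=0$ --- is precisely what you label ``the main obstacle''; that is, you state the problem rather than solve it. The Baire-category fallback is in the same state: the whole difficulty there is the meagerness of $\{U:UD\cap D\ne\{0\}\}$, which you also leave unproven, so neither route is completed.

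For comparison: the paper itself gives no proof of this theorem --- it is quoted from von Neumann's original paper (Satz 18 of the cited reference), with equivalent reformulations quoted from later literature --- so there is no internal argument to measure your attempt against. But those proofs consist exactly of the ingredient you postpone: a concrete construction (equivalently, of a fundamental symmetry $J$ with $\dom(JAJ)\cap\dom A=\{0\}$, or of a subspace $\sM$ with $\sM\cap\dom A=\sM^\perp\cap\dom A=\{0\}$, as in the equivalences (i)--(v) listed after the theorem in the paper) together with the estimate that kills every nonzero vector in the intersection. Until such a construction and its verification are supplied, your argument does not prove the statement.
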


Note that von Neumann's theorem does not hold if the Hilbert space is non-separable, as shown recently in \cite{MR3509134}.

In another recent paper \cite[Thm.~5.1]{MR3612999} 
the following equivalent formulations of von Neumann's theorem were established:

\smallskip

\textit{For an unbounded selfadjoint operator $A$ in an infinite dimensional complex and not necessarily separable Hilbert $\sH$
the following are equivalent:
\begin{enumerate}
\def\labelenumi{\rm (\roman{enumi})}
\item there exists a unitary operator $U$ in $\sH$ such that
$$\dom(U^*AU)\cap\dom A=\{0\};$$
\item there exists an unbounded selfadjoint operator $B$ in $\sH$ such that
$$\dom B\cap\dom A=\{0\};$$
\item there exists a fundamental symmetry $J$ in $\sH$ $(J=J^*=J^{-1})$ such that
$$\dom(JAJ)\cap\dom A=\{0\};$$
\item there exists a subspace $\sM$ in $\sH$ such that
$$\sM\cap\dom A=\sM^\perp\cap\dom A=\{0\};$$
\item there exists a closed densely defined restriction $A_0$ of $A$ such that $\dom (AA_0)=\{0\}$
$($and thus, in particular, $\dom A^2_0=\{0\})$.
 \end{enumerate}
 }

\smallskip

Special examples of selfadjoint operators $A$ and $B$ with $\dom A\cap\dom B=\{0\}$ may be found in 
\cite{MR683379}, \cite{MR2216946}; note that the example given in \cite{MR1885442} contains a mistake, see Remark~\ref{mistake}.

\smallskip

If, in von Neumann's theorem, we set
\begin{alignat*}{2}
   &\,T:=UA, \quad &&{\,\dom T=\dom A,}\\
\intertext{then, since $U$ is bounded,}
   &T^*\!=A U^*\!,\quad  &&  \dom T^*\!= U\dom A.
\end{alignat*}
It follows that there exists a densely defined operator $T$ such that $\dom T\cap\dom T^*=\{0\}$,
but we do not obtain any information on the properties of $T$. We will return to this example in Section~\ref{sec:more-ex}.


\section{Maximal sectorial operators $\cA$ such that $\dim(\dom \cA\cap\dom \cA^*)\in \dN_0$}
\label{sec:max-sect-dim=n}

In this section we present the most striking and extreme phenomenon of $m$-sectorial operators such that $\dom \cA\cap\dom \cA^*=\{0\}$ and, more generally, $\dom \cA\cap\dom \cA^*$ is a sub\-space of arbitrary finite dimension $n\in\dN_0$.

Throughout this section we assume that $\sH$ is a separable infinite dimensional complex Hilbert space.

\begin{theorem}
\label{subsec:max-sect-dim=0}
There exists an $m$-sectorial operator $A$ in $\sH$ with $\dom A\cap\dom A^*\!=\{0\}$; \linebreak
the latter necessitates $\ker A \!=\!\{0\}$, and $A$ may be chosen with or without compact resolvent.
\end{theorem}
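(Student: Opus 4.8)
The plan is to construct $A$ as $A = U^{-1} B U$ where $B$ is a carefully chosen $m$-sectorial operator and $U$ is a unitary operator intertwining $B$ and $B^*$ in such a way that the domain intersection collapses to $\{0\}$. However, the naive approach of taking $B$ selfadjoint fails immediately: if $B$ is selfadjoint then $A = U^{-1}BU$ is similar to a selfadjoint operator via a \emph{unitary}, hence is itself selfadjoint, and then $\dom A = \dom A^*$ is dense. So von Neumann's theorem alone, applied as in the excerpt's remark $T=UA$, produces $\dom T \cap \dom T^* = \{0\}$ but destroys accretivity. The essential difficulty, and what I expect to be the main obstacle, is to build an operator that is genuinely $m$-sectorial (numerical range in a proper sector, resolvent set nonempty) while \emph{simultaneously} forcing $\dom A$ and $\dom A^*$ to be in von Neumann--type ``general position'' so that they meet only in $0$.

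My strategy is to exploit the form representation together with von Neumann's theorem in the guise of part (iv) of the equivalent formulations: there exists a subspace $\sM$ with $\sM \cap \dom A_0 = \sM^\perp \cap \dom A_0 = \{0\}$ for a suitable selfadjoint $A_0$. Concretely, I would start from an unbounded positive selfadjoint operator $L$ (e.g.\ with discrete spectrum, to control the compact-resolvent option) and a fundamental symmetry $J = J^* = J^{-1}$ arising from von Neumann's theorem so that $\dom(JLJ) \cap \dom L = \{0\}$. The idea is to manufacture a sectorial form whose real part is governed by $L$ but whose imaginary part twists the domain of the associated operator away from the domain of its adjoint by an amount measured precisely by $J$. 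Writing $A = \RE A + \I\, \IM A$ heuristically, the operator should have $\dom A \approx \dom L$ and $\dom A^* \approx \dom(JLJ)$, whose intersection is trivial by construction. Making this rigorous requires the first representation theorem from \cite{MR1335452} so that $A$ is the $m$-sectorial operator associated with a genuinely closed sectorial form.

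The key steps, in order, are: (1) fix the selfadjoint operator $L$ and the fundamental symmetry $J$ from von Neumann's theorem guaranteeing $\dom L \cap \dom(JLJ) = \{0\}$; (2) define a closed sectorial sesquilinear form $\sa$ on an appropriate form domain $\cD[\sa]$, with real part equivalent to the form of $L$ and imaginary part built from $J$, and verify sectoriality, i.e.\ $|\IM \sa[u]| \le \tan\alpha\, \RE \sa[u]$ for some $\alpha \in [0,\pi/2)$; (3) invoke the first representation theorem to obtain the $m$-sectorial operator $A$ with $A u = $ the Riesz representative of $\sa[u,\cdot]$, and identify $\dom A$ and $\dom A^*$ explicitly; (4) show that on $\dom A \cap \dom A^*$ the twisting forces membership in both $\dom L$ and $\dom(JLJ)$, so the intersection is $\{0\}$. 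Finally I would note that $\dom A \cap \dom A^* = \{0\}$ forces $\ker A = \{0\}$, since by \eqref{kernel-accr} any $m$-accretive operator satisfies $\ker A \subseteq \dom A \cap \dom A^*$; and the compact-versus-noncompact resolvent dichotomy is arranged by choosing $L$ with discrete spectrum (compact resolvent) or with continuous spectrum (no compact resolvent), the sectorial perturbation by a bounded symmetry not altering the essential spectrum.

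The step I expect to be hardest is (4): the explicit identification of $\dom A$ and $\dom A^*$ and the verification that their intersection is exactly $\{0\}$. The difficulty is that the operator $A$ coming from a form is characterized only by $\dom A = \{u \in \cD[\sa] : \sa[u,\cdot] \text{ bounded in the } \sH\text{-topology}\}$, and translating this regularity condition into a clean statement like ``$u \in \dom L$'' is delicate when the imaginary part of the form is unbounded relative to the real part. I anticipate needing to keep the perturbation \emph{form-bounded but operator-large}, so that the sectoriality estimate in (2) survives while the operator domains $\dom A$, $\dom A^*$ genuinely separate. Balancing these two competing requirements — enough twist to separate the domains, little enough to stay inside a fixed sector — is the crux of the construction.
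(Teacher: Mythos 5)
Your skeleton --- von Neumann's theorem fed into a closed sectorial form via the first representation theorem, with \eqref{kernel-accr} for the kernel claim and the choice of $L$ for the resolvent dichotomy --- is exactly the paper's, but there is a genuine gap at the center of the proposal: the form is never written down, and the heuristic you set up to guide its construction points in the wrong direction. For an $m$-sectorial operator $A$ associated with a closed sectorial form $\sa$, both $\dom A$ and $\dom A^*$ are contained in (and are even dense, with respect to the form norm, in) the \emph{common} form domain $\dom\sa=\dom\sa^*$. In the construction that actually works this form domain is $\dom L$, so $\dom A^*\subset\dom L$ and therefore $\dom A^*\cap\dom(JLJ)=\{0\}$ --- the exact opposite of your heuristic $\dom A^*\approx\dom(JLJ)$. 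Correspondingly, your step (4) cannot be run on $u$ itself: the domain conditions produced by the representation theorem constrain $Lu$ and $JLu$, never $Ju$, so the von Neumann general position has to be brought to bear on $Lu$, not on $u$. Moreover, the difficulty you single out as the crux --- keeping the twist ``form-bounded but operator-large'' and balancing it against the sector --- is a phantom: the twist is a \emph{bounded} operator, sectoriality holds whatever its norm (even $\varepsilon J$ works), and the separation of domains comes solely from the general position of $J\dom L$ (or of the range of the twist) relative to $\dom L$, not from its size.

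The missing piece is short. Take
\begin{equation*}
\sa[u,v]:=\big((I+\I J)Lu,Lv\big),\qquad \dom\sa:=\dom L,
\end{equation*}
with $L$ unbounded selfadjoint, $\ker L=\{0\}$, and $J$ your fundamental symmetry from formulation (iii), i.e.\ $J\dom L\cap\dom L=\{0\}$; the paper uses formulation (ii) instead (a bounded selfadjoint $G$ with $\ker G=\{0\}$ and $\ran G\cap\dom L=\{0\}$), and either input works verbatim. This form is closed because its real part is $\|Lu\|^2$, so the form norm is the graph norm of $L$, and it is sectorial because $|\IM\sa[u]|=|(JLu,Lu)|\le\|Lu\|^2=\RE\sa[u]$. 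The first representation theorem gives $\dom A=\{u\in\dom L:(I+\I J)Lu\in\dom L\}$ and $\dom A^*=\{u\in\dom L:(I-\I J)Lu\in\dom L\}$; for $u$ in the intersection, adding and subtracting these conditions yields $Lu\in\dom L$ and $JLu\in\dom L$, i.e.\ $Lu\in\dom L\cap J\dom L=\{0\}$, whence $u=0$ since $\ker L=\{0\}$. Finally, the compact-resolvent dichotomy should be justified by Kato's form criterion (the resolvent of $A$ is compact iff the form domain embeds compactly into $\sH$, i.e.\ iff $L$ has compact resolvent), not by invariance of the essential spectrum under bounded perturbations: $A$ is \emph{not} a bounded operator perturbation of any selfadjoint operator --- that is precisely why such an example can exist.
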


\begin{proof}
Let $L$ be an unbounded selfadjoint operator in $\sH$ such that
\begin{equation}\label{zero}
   \ker L=\{0\}.
\end{equation}
According to von Neumann's theorem, Theorem \ref{vnthm}, more precisely, claim (ii) equivalent to it,
there exists a bounded selfadjoint operator $G$ in $\sH$~with
\begin{equation}
\label{random0}
    \ker G=\{0\},\quad \ran G\cap\dom L=\{0\};
\end{equation}
e.g.\  we can choose $G=(B^*B+I)^{-1/2}$ in (ii).
 We consider the sesquilinear form
 \[
   \hspace{2mm} {\mathfrak a}[u,v]=((I+\I G)Lu,Lv),\quad \ u,v\in\dom  {\mathfrak a}:=\dom L.
 \]
The form $ {\mathfrak a}$ is densely defined, closed and sectorial with adjoint form $ {\mathfrak a}^*$ given by
\[
   {\mathfrak a}^*[\phi,\psi]=((I-\I G)L\phi,L\psi),\quad \phi,\psi\in\dom  {\mathfrak a}^*\!=\dom L.
\]
By the first representation theorem \cite[Sect.~VI.2.1]{MR1335452}, 
the associated $m$-sectorial operators $A$ and $A^*$ are given by
\begin{alignat*}{2}
\dom A \, &=\left\{u\in\dom L:(I+\I G)Lu\in\dom L\right\}, \quad & Au \ &=L(I+\I G)Lu,\ \ u\in\dom A, \\
\dom A^*\!&=\left\{\phi\in\dom L:(I-\I G)L\phi\in\dom L\right\}, \quad & A^*\!\phi&=L(I-\I G)L\phi,\ \ \phi\in\dom A^*.
\end{alignat*}
This yields the characterization
\begin{equation}
\label{intersection}
 \dom A\cap\dom A^*=\big\{ u \in \dom L : u \in \dom L^2, \, G L u \in \dom L \big\}.
\end{equation}
Due to properties \eqref{random0}, \eqref{zero}
of the operators $L$ and $G$, this implies $\dom A\cap\dom A^*=\{0\}$.

The other claims follow because $\ker A = \ker L = \{0\}$, the operator $L$ can be chosen with or without compact resolvent and
the $m$-sectorial operator $A$ constructed above has compact resolvent if and only if
$L$ has by \cite[Thm.\ V.3.40 and VI.3.3]{MR1335452}.
\vspace{-2mm}
\end{proof}

\vskip 0.3cm

\begin{remark}
\label{mistake}
The abstract example of an $m$-accretive operator $T$ with $\dom T\cap\dom T^*$ $=\!\{0\}$  in \cite{MR1885442}
contains a mistake. In fact, the construction $T\!=\!(I\!+\!R)^{-1}(I\!+\!S)^{-1}$ therein relies on the property $\dom R \cap \dom S \!=\! \{0\}$.
However, in the last line of \cite[p.\ 297]{MR1885442} only $(I\!+\!R)^{-1}(I\!+\!R)\!=\!I\uphar \dom R$ holds,
not equality to the identity $I$  on all of $\cH$; as a con\-se\-quence, only the $($trivial$)$ inclusion $\{0\} \!\subset\! \dom R \cap \dom S$
follows and not equality as claimed in \cite{MR1885442}.
\end{remark}

\vskip 0.3cm

\begin{theorem}
\label{subsec:max-sect-dim=n}
For arbitrary $n\in\dN$ there exists an $m$-sectorial operator $A$ in $\sH$ such that $\dim (\dom A\cap\dom A^*)=n$;
moreover, $A$ may be chosen such that $\ker A \!=\! \{0\}$ and with or without compact resolvent.
\end{theorem}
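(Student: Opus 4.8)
The plan is to reduce everything to the extreme case $n=0$, already settled in Theorem \ref{subsec:max-sect-dim=0}, by forming an orthogonal direct sum with a finite-dimensional block. Concretely, I would split the separable infinite-dimensional space as $\sH=\sH_1\oplus\sH_0$ with $\dim\sH_0=n$ and $\sH_1$ again separable and infinite-dimensional. On $\sH_1$ let $A_0$ be an $m$-sectorial operator with $\dom A_0\cap\dom A_0^*=\{0\}$ and $\ker A_0=\{0\}$, furnished by Theorem \ref{subsec:max-sect-dim=0}; say $A_0$ is $m$-$\alpha$-sectorial, $W(A_0)\subset\overline{\mathcal S(\alpha)}$. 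On the finite-dimensional space $\sH_0$ let $A_1$ be any bounded invertible sectorial operator whose numerical range lies in the same sector, the simplest choice being $A_1=I_{\sH_0}$, so that $W(A_1)=\{1\}\subset\overline{\mathcal S(\alpha)}$. I then set $A:=A_0\oplus A_1$ on $\dom A:=\dom A_0\oplus\sH_0$.

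First I would verify that $A$ is $m$-sectorial. It is densely defined and closed, being an orthogonal sum of a densely defined closed operator and a bounded operator on a finite-dimensional space, and $A^*=A_0^*\oplus A_1^*$ with $\dom A^*=\dom A_0^*\oplus\sH_0$. For a unit vector $u=(u_1,u_0)$ the quantity $(Au,u)=(A_0u_1,u_1)+(A_1u_0,u_0)$ is a convex combination, with weights $\|u_1\|^2$ and $\|u_0\|^2$, of a point of $W(A_0)$ and a point of $W(A_1)$; since the sector $\overline{\mathcal S(\alpha)}$ is convex and contains both numerical ranges, $W(A)\subset\overline{\mathcal S(\alpha)}$. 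Accretivity together with surjectivity of $A-\lambda I$ for $\RE\lambda<0$ (both summands being $m$-accretive, so that $(A-\lambda I)^{-1}=(A_0-\lambda I)^{-1}\oplus(A_1-\lambda I)^{-1}$ is everywhere defined and bounded there) shows that $A$ is $m$-accretive, hence $m$-sectorial.

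Next I would read off the domain intersection. Because domains, adjoints and intersections all respect the orthogonal decomposition, $\dom A\cap\dom A^*=(\dom A_0\cap\dom A_0^*)\oplus\sH_0=\{0\}\oplus\sH_0$, whence $\dim(\dom A\cap\dom A^*)=\dim\sH_0=n$, as required. The trivial kernel follows from $\ker A=\ker A_0\oplus\ker A_1=\{0\}$, using $\ker A_0=\{0\}$ and the invertibility of $A_1$. Finally, the resolvent $(A-\lambda I)^{-1}=(A_0-\lambda I)^{-1}\oplus(A_1-\lambda I)^{-1}$ is compact if and only if $(A_0-\lambda I)^{-1}$ is, since the second summand acts on a finite-dimensional space; as $A_0$ may be chosen with or without compact resolvent by Theorem \ref{subsec:max-sect-dim=0}, the same holds for $A$.

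The only point that requires genuine care is the $m$-sectoriality of the orthogonal sum, i.e.\ that the two blocks share a common sector. This is precisely why I pin the finite-dimensional block down to have numerical range at the single point $1$, which lies in every sector with vertex $0$ and makes the convexity argument immediate; had I allowed an arbitrary sectorial $A_1$ I would merely have to enlarge $\alpha$ to accommodate $W(A_1)$. Everything else is routine bookkeeping for orthogonal sums, and the construction leans entirely on the already established case $n=0$.
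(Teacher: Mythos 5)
Your proof is correct, and it takes a genuinely different route from the paper's. You reduce to the extreme case $n=0$ by an orthogonal direct sum $A=A_0\oplus I_{\sH_0}$ with $\dim\sH_0=n$; all claims then follow by routine bookkeeping for orthogonal sums (sectoriality via convexity of the sector, $\dom A\cap\dom A^*=\{0\}\oplus\sH_0$, trivial kernel, and compactness of the resolvent being unaffected by a finite-dimensional block), and the one delicate point — that the two blocks share a common sector — is neutralized by your choice $A_1=I_{\sH_0}$. The paper instead re-runs the construction of Theorem \ref{subsec:max-sect-dim=0} with a modified perturbation: it fixes linearly independent $e_1,\dots,e_n\in\dom L^2$, forms $H_n=\lspan\{Le_1,\dots,Le_n\}$ and $\sH_n=\sH\ominus H_n$, invokes Stenger's lemma to see that the compression $\wh L_n=P_{\sH_n}L\uphar\sH_n$ is selfadjoint in $\sH_n$, applies von Neumann's theorem inside $\sH_n$ to obtain $\wh G_n$ with $\ker\wh G_n=\{0\}$ and $\ran\wh G_n\cap\dom\wh L_n=\{0\}$, and then uses $G_n=\wh G_nP_{\sH_n}$ (so that $\ker G_n=H_n$) in the sectorial form $((I+\I G_n)Lu,Lv)$; the associated $m$-sectorial operator $A_n$ satisfies $\dom A_n\cap\dom A_n^*=\lspan\{e_1,\dots,e_n\}$. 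The comparison is instructive. Your construction is shorter, fully modular, and uses the case $n=0$ as a black box; its price is that the $n$-dimensional intersection is a reducing subspace on which $A$ acts as the identity, so the example is degenerate in that direction. The paper's construction produces an intersection spanned by essentially arbitrary prescribed vectors of $\dom L^2$, and this span is in general not reducing (indeed $A_ne_k=L^2e_k$, since $G_nLe_k=0$), so the finite-dimensional intersection is genuinely entangled with the unbounded part; it also keeps a single form domain $\dom L$ throughout, at the cost of needing Stenger's lemma and a second application of von Neumann's theorem.
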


\begin{proof}
Let $L$ be an unbounded selfadjoint operator satisfying \eqref{zero}.
Since the domain of the square of every selfadjoint operator is dense by a theorem also due to von Neumann \cite[Thm.\ V.3.24]{MR1335452},
we can choose linearly independent vectors
\begin{equation}
\label{choice}
  e_1,e_2,\dots,e_n \subset \dom L^2
\end{equation}
and set
\[
H_n:=\lspan\{Le_1, Le_2,\dots, Le_n\}\subset \dom L,\quad \sH_n:=\sH\ominus H_n.
\]
Since $H_n$ is finite dimensional, 
Stenger's lemma \cite{MR0220075}, see also Remark \ref{rem:stenger} below, implies that the operator
\[
\wh L_n:=P_{\sH_n}L\uphar\sH_n,\quad \dom \wh L_n :=\dom L\cap \sH_n,
\]
is a selfadjoint operator in $\sH_n$ with $\ker L_n = \{0\}$.
According to von Neumann's theorem, Theorem~\ref{vnthm}, there exists a bounded selfadjoint operator $\wh G_n$ in $\sH_n$ such that
\begin{equation}
\label{randomn}
  \ker \wh G_n=\{0\},\quad \ran \wh G_n\cap\dom \wh L_n=\{0\}.
\end{equation}
Then
\[
G_n:=\wh G_nP_{\sH_n}
\]
is a bounded selfadjoint operator in $\sH$ with $\ker G_n = H_n$.
If we use the operators $L$ and $G_n$ to define $m$-sectorial operators $A_n$ and $A^*_n$ in $\sH$ in the same way as in the proof of
Theorem~\ref{subsec:max-sect-dim=0}, then \eqref{intersection} implies that
\begin{equation}
\label{intersectionn}
   \dom A_n\cap\dom A_n^*=\big\{ u \in \dom L : u \in \dom L^2, \, G_n L u \in \dom L \big\}
\end{equation}
and $A_n$ has compact resolvent if and only if $L$ has.
The proof is complete if we show that
\begin{equation}
\label{span=}
  \dom A_n\cap\dom A^*_n = \lspan\{e_1,e_2,\dots, e_n\}.
\end{equation}
If $u\!\in\!\dom A_n\cap\dom A^*_n$, then \eqref{intersectionn} and
$\ran G_n\subseteq \ran \wh G_n \subseteq \sH_n$ imply $G_n Lu=\wh G_n P_{\sH_n}Lu \in \dom L\cap\sH_n\!=\!\dom \wh L_n \cap \ran \wh G_n$.
Now \eqref{randomn} yields that $P_{\sH_n}Lu\!=\!0$ or, equivalently, $Lu\!\in\! H_n$,~i.e.\
\[
   Lu=\sum_{k=1}^n c_k Le_k
\]
with $c_k \in \dC$, $k=1,2,\dots,n$. Since $\ker L= \{0\}$, we obtain that $u= \sum_{k=1}^n c_k e_k \in\dom L^2$ and hence `$\subseteq$' in \eqref{span=}.
Conversely, if  $u= \sum_{k=1}^n c_k e_k$ with $c_k \in \dC$, $k=1,2,\dots,n$, then $u\in\dom L^2$ by \eqref{choice} and
$$
  G_nLu=\wh G_nP_{\sH_n}Lu=\wh G_nP_{\sH_n}\left(\sum_{k=1}^n c_k Le_k\right)=0 \in \dom L,
$$
which proves $ u \in\dom A_n\cap\dom A_n^*$ by \eqref{intersectionn} and hence `$\supseteq$' in \eqref{span=}.

The last two claims follow in the same way as in the proof of Theorem~\ref{subsec:max-sect-dim=0}.
\end{proof}


\section{Maximal sectorial operators $\cA$ such that $\codim(\dom \cA\cap\dom \cA^*)=\infty$}
\label{sect.infinite}

In this section we investigate the structure of unbounded $m$-accretive operators with the property that
$\dom T\cap\dom T^*$ is either a finite dimensional subspace or an  infinite dimensional proper closed subspace.

In this case $\codim(\dom T\cap\dom T^*)\!=\!\infty$ and we show that $T$ admits a matrix representation such that the compression of $T$ to
the complement of $\dom T\cap\dom T^*$ is $m$-accretive.

Note that a closed operator for which $\dom T$ contains a closed infinite dimensional subspace, here $\dom T\cap\dom T^*$,
 cannot have compact resolvent by the closed graph theorem and since the resolvents of bounded operators in an infinite dimensional space
cannot be~compact.

\begin{proposition}
\label{ltymhj}
Let $T$ be an unbounded $m$-accretive operator in a Hilbert space~$\sH$ such~that
\[
  \dom T\cap\dom T^* =: \sH_1
\]
is a proper closed subspace of $\sH$. Then $\sH_2\!:=\!\sH\ominus \sH_1$ has infinite dimension, $\dim \sH_2=\infty$,
and with respect to the decomposition $\sH\!=\!\sH_1\oplus\sH_2$ the operator $T$ admits a matrix representation
\begin{equation}
\label{blmatrTT}
   T:=\begin{bmatrix}T_1&K_{12}\cr K_{21}& T_2 \end{bmatrix},
   \quad \dom T=\dom T_1\oplus\dom T_2 =\sH_1\oplus\dom T_2,
\end{equation}
where
\begin{equation}\label{blmatrT2}
\left\{\begin{array}{l}
T_1\;\mbox{is bounded and accretive in }\sH_1,\\
T_2\;\mbox{is $m$-accretive in } \sH_2, \  \dom T_2\cap\dom T^*_2=\{0\},\\
K_{12}\!\in\!\bB(\sH_2,\sH_1), K_{21}\!\in\!\bB(\sH_1,\sH_2).\\
\end{array}
\right.
\end{equation}
\end{proposition}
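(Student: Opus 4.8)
The plan is to isolate all of the unboundedness of $T$ in a single $(2,2)$-block, using that the ``diagonal'' of $T$ on the closed subspace $\sH_1=\dom T\cap\dom T^*$ is automatically bounded. First I would record the two boundedness facts that drive everything: since $T$ is $m$-accretive it is closed, and $\sH_1$ is a closed subspace contained in $\dom T$, so the restriction $T\uphar\sH_1$ is a closed operator defined on the whole Hilbert space $\sH_1$ and hence bounded by the closed graph theorem; the same argument applied to $T^*$ shows $T^*\uphar\sH_1$ is bounded. Writing $P_j:=P_{\sH_j}$, I would then set $T_1:=P_1T\uphar\sH_1$ and $K_{21}:=P_2T\uphar\sH_1$ (both bounded by the above), $\dom T_2:=\dom T\cap\sH_2$, $T_2:=P_2T\uphar\dom T_2$, and decompose each $u\in\dom T=\sH_1\oplus\dom T_2$ to read off \eqref{blmatrTT}. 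Accretivity of $T_1$ is immediate since $(T_1u,u)=(Tu,u)$ for $u\in\sH_1$, and likewise $\RE(T_2u,u)=\RE(Tu,u)\ge 0$ for $u\in\dom T_2$.

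For the off-diagonal block $K_{12}=P_1T\uphar\dom T_2$ the point is boundedness, and here I would use duality rather than a direct estimate. For $u\in\dom T_2\subset\sH_2$ and $w\in\sH_1\subset\dom T^*$ one has $(P_1Tu,w)=(Tu,w)=(u,T^*w)=(u,P_2T^*w)$, so $P_1Tu=M^*u$ where $M:=P_2T^*\uphar\sH_1\in\bB(\sH_1,\sH_2)$ is bounded by the closed graph argument for $T^*$. Thus $K_{12}$ is the restriction to $\dom T_2$ of the bounded operator $M^*\in\bB(\sH_2,\sH_1)$, which I take as its definition. Consequently $B:=\begin{bmatrix}T_1&K_{12}\cr K_{21}&0\end{bmatrix}\in\bB(\sH)$ and $T=B+T_0$, where $T_0:=0_{\sH_1}\oplus T_2$ is the orthogonal direct sum, with $\dom T_0=\dom T$.

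The heart of the proof is the maximal accretivity of $T_2$, which I would establish for $T_0$ and transfer to the $\sH_2$-block. The operator $T_0$ is densely defined (since $\dom T_0=\dom T$ is dense, forcing $\overline{\dom T_2}=\sH_2$) and accretive, as $\RE(T_0u,u)=\RE(T_2u_2,u_2)\ge 0$. For maximality I use that $m$-accretivity of $T$ gives boundedly invertible $T+\lambda$ with $\|(T+\lambda)^{-1}\|\le 1/\RE\lambda$ for $\RE\lambda>0$. The factorization $T_0+\lambda=(I-B(T+\lambda)^{-1})(T+\lambda)$, valid on $\dom T$, then shows for $\RE\lambda>\|B\|$ that $I-B(T+\lambda)^{-1}$ is boundedly invertible by a Neumann series, whence $T_0+\lambda$ is a bijection of $\dom T$ onto $\sH$ and in particular $\ran(T_0+\lambda)=\sH$. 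By criterion (ii) for $m$-accretivity, $T_0$ is $m$-accretive; reading $\ran(T_0+\lambda)=\sH$ componentwise yields $\ran(T_2+\lambda)=\sH_2$, so $T_2$ is $m$-accretive in $\sH_2$.

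Two items remain. For $\dom T_2\cap\dom T_2^*=\{0\}$ I would compute the adjoint: since $B$ is bounded and $T_0$ is an orthogonal direct sum, $T^*=B^*+T_0^*$ with $\dom T^*=\dom T_0^*=\sH_1\oplus\dom T_2^*$, so intersecting with $\dom T=\sH_1\oplus\dom T_2$ gives $\sH_1=\dom T\cap\dom T^*=\sH_1\oplus(\dom T_2\cap\dom T_2^*)$, forcing $\dom T_2\cap\dom T_2^*=\{0\}$. Finally, for $\dim\sH_2=\infty$: were $\sH_2$ finite dimensional, then $\dom T=\sH_1\oplus(\dom T\cap\sH_2)$ would be the orthogonal sum of a closed subspace and a finite dimensional (hence closed) one, so $\dom T$ would be closed, and a closed operator with closed domain is bounded, contradicting that $T$ is unbounded. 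The \emph{main obstacle} is the maximal accretivity of $T_2$, where the bounded-perturbation factorization and Neumann-series argument are essential; the boundedness of $K_{12}$ via the adjoint identity is the other step requiring care, the remaining verifications being routine.
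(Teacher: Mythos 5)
Your proof is correct, and it shares the paper's skeleton: the same closed-graph argument making $T\uphar\sH_1$ and $T^*\uphar\sH_1$ bounded, the same block operators $T_1$, $K_{21}$, $T_2$, the same duality identity realizing $K_{12}$ as the adjoint of $P_{\sH_2}T^*\uphar\sH_1$, and the same identity $\dom T\cap\dom T^*=\sH_1\oplus(\dom T_2\cap\dom T_2^*)$ for the trivial intersection. Where you genuinely diverge is the central step, the $m$-accretivity of $T_2$. The paper writes down the matrix form of $T^*$ (legitimate since $T_1$, $K_{12}$, $K_{21}$ are bounded), observes that $T^*$ is accretive because $T$ is $m$-accretive, deduces $W(T_2^*)\subset W(T^*)$, and concludes by criterion (iii) for maximal accretivity (densely defined, closed, adjoint accretive). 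You instead write $T=B+T_0$ with $B\in\bB(\sH)$ and $T_0=0_{\sH_1}\oplus T_2$, factorize $T_0+\lambda I=\bigl(I-B(T+\lambda I)^{-1}\bigr)(T+\lambda I)$, and run a Neumann series for $\RE\lambda>\|B\|$ to get $\ran(T_0+\lambda I)=\sH$, hence $\ran(T_2+\lambda I)=\sH_2$, concluding by criterion (ii). Both arguments are sound. The paper's is more economical, since the matrix form of $T^*$ is needed anyway for the final step; your route buys two small things: criterion (ii) does not require verifying that $T_2$ is closed, a point the paper's appeal to criterion (iii) leaves implicit (closedness of $T_2$ in fact follows from your own observation that $T_0=T-B$ is a bounded perturbation of a closed operator), and your explicit argument that $\dim\sH_2<\infty$ would force $\dom T$ to be closed and hence $T$ bounded fills in a step the paper merely asserts.
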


\begin{proof}
Because $T$ is unbounded and closed in $\sH$ and $\sH_1 \subset \sH$ is a closed subspace, we must have $\dim\sH_2=\infty$. Since $T^*$
is naturally closed in $\sH$, $\sH_1\subset \dom T$, $\sH_{   {1}}\subset\dom T^*$, and $\sH_1$ is a closed subspace, the restrictions
$T\uphar\sH_1$, $T^*\uphar\sH_1$ are closed and everywhere defined, whence bounded operators. Then also the operators
\[
   T_1:=P_{\sH_1}T\uphar\sH_1,\quad K_{21}:= P_{\sH_2}T\uphar\sH_{   {1}},\quad Y:=P_{\sH_2}T^*\uphar\sH_1,
\]
are bounded and $P_{\sH_1}T^*\uphar\sH_1=T^*_1$. Besides, it is not difficult to check that
\[
   \dom T\cap \sH_2=P_{\sH_2}\dom T,\quad \dom T^*\cap \sH_2=P_{\sH_2}\dom T^*.
\]	
Hence {$\dom T\cap \sH_2$ is dense and,} for $f_1\in\sH_1$ and $g_2\in\dom T\cap \sH_2 = P_{\sH_2}\dom T$,
\[
(f_1,Y^*g_2)=(Yf_1,g_2)=(P_{\sH_2}T^*f_1,g_2)=(T^*f_1,g_2)=(f_1, T g_2)=(f_1, P_{\sH_1}T g_2).
\]
This shows that the bounded operator $K_{12}:=Y^* = (T^*\uphar\sH_1)^* P_{\sH_2}$ coincides with
the closure $\overline{P_{\sH_1}T\uphar(\dom T\cap \sH_2)}$ and $T$ has the matrix form \eqref{blmatrTT}
{with $T_2:=P_{\sH_2} T \uphar (\dom T \cap \sH_2)$}.

Clearly, since $T$ is accretive and $W(T_i)\subset W(T)$, $i=1,2$, the operators $T_1$, $T_2$ are accretive as well.
By \eqref{blmatrTT} and because $T_1$, $K_{12}$ and $K_{21}$ are bounded, we have
\[
   T^*=\begin{bmatrix}T^*_1 & K^*_{21}\cr K^*_{12} & T^*_2 \end{bmatrix}, \quad \dom T^*=\dom T^*_1\oplus\dom T^*_2 =\sH_1\oplus\dom T^*_2,
\]
and $T^*_2 = (T \uphar (\dom T \cap \sH_2))^* P_{\sH_2} ({\supseteq} P_{\sH_2}T^*\uphar\sH_2)$.
Since $T$ is $m$-accretive, $T^*$ is accretive. Then $W(T_2^*) \subset W(T^*)$ shows that
$T^*_2$ is accretive and hence $T_2$ is $m$-accretive.
Further,
\[
   \sH_1=\dom T\cap\dom T^*=\left(\sH_1\oplus\dom T_2\right)\cap \left(\sH_1\oplus\dom T^*_2\right)=\sH_1\oplus(\dom T_2{\cap}\dom T^*_2),
\]
which shows that $\dom T_2\cap\dom T^*_2=\{0\}$. 
\end{proof}

\begin{remark}
Note that if the Hilbert space $\sH$ has a decomposition $\sH=\sH_1\oplus\sH_2$ with a closed subspace $\sH_1$ and $\dim\sH_2=\infty$ and if a linear operator $T$ has a matrix representation \eqref{blmatrTT} with a bounded operator $T_1$ in $\sH_1$,
$K_{12}\in\bB(\sH_2,\sH_1)$, $K_{21}\in\bB(\sH_1,\sH_2)$ and a closed densely defined linear operator $T_2$ in $\sH_2$ with
$\dom T_2\cap\dom T^*_2=\{0\}$, then $\dom T\cap\dom T^*=\sH_1$.
\end{remark}

The following additive perturbation result is useful to construct $m$-sectorial coercive operators for which $\dom T\cap\dom T^*$ is a proper closed subspace of $\sH$ with prescribed finite or infinite dimension and for which $\codim(\dom T\cap\dom T^*)\!=\!\infty$.

\begin{proposition}
\label{jdhfnyj}
Let $T_i$ be $m$-sectorial coercive operators in a Hilbert space $\sH_i$, i.e.\ \linebreak $\RE (T_i f_i,f_i)\ge m_i\|f_i\|^2_{\sH_i}$,
$f_i\in\dom T_i$, with some $m_i>0$ for $i=1,2$, and let
$\cK_{12}\in\bB(\sH_2,\sH_1)$, $K_{21}\in\bB(\sH_1,\sH_2)$.
Suppose that $\dom T_2\cap\dom T^*_2=\{0\}$ and
\begin{equation}
\label{eckjdyfr}
  {\frac 12 \big(\|K_{12}]|+\|K_{21}\| \big)} < \min\{m_1,m_2\}.
\end{equation}
Then $\dim\sH_2=\infty$ and the operator $T$ in $\sH=\sH_1 \oplus \sH_2$ given by
\begin{equation}\label{blmatrT}
   T:=\begin{bmatrix}T_1&K_{12}\cr K_{21}& T_2 \end{bmatrix},
   \quad \dom T:=\dom T_1\oplus\dom T_2,
\end{equation}
is $m$-sectorial and coercive with
\[
   {\dim (\dom T\cap\dom T^*)= \dim (\dom T_1\cap\dom T^*_1)}, \quad
   \codim(\dom T\cap\dom T^*)
   =\infty.
\]
If, in addition, the operators $T^{-1}_1$ and $T^{-1}_2$ are compact, then $T^{-1}$ is compact.
\end{proposition}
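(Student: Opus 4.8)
The plan is to view $T$ as a bounded off\nobreakdash-diagonal perturbation $T=T_0+K$ of the block\nobreakdash-diagonal operator $T_0:=\mathrm{diag}(T_1,T_2)$ on $\dom T_0=\dom T_1\oplus\dom T_2=\sH_1\oplus\dom T_2$, where $K:=\begin{bmatrix}0&K_{12}\cr K_{21}&0\end{bmatrix}\in\bB(\sH)$, and to read off every assertion from a single coercivity estimate together with the block description of the adjoint already used in Proposition~\ref{ltymhj}. The fact that $\dim\sH_2=\infty$ I would dispose of first: since $T_2$ is $m$-sectorial it is densely defined, and if $\sH_2$ were finite dimensional then $T_2$ would be bounded and everywhere defined, forcing $\dom T_2\cap\dom T_2^*=\sH_2$, contradicting $\dom T_2\cap\dom T_2^*=\{0\}$ unless $\sH_2=\{0\}$; hence $T_2$ is unbounded and $\dim\sH_2=\infty$.

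For coercivity I would expand $\RE(Tu,u)$ for $u=(u_1,u_2)\in\dom T$, bound the two diagonal terms below by $m_i\|u_i\|^2$, bound the two cross terms above in modulus by $\|K_{12}\|\,\|u_1\|\|u_2\|$ and $\|K_{21}\|\,\|u_1\|\|u_2\|$, and apply $\|u_1\|\|u_2\|\le\tfrac12(\|u_1\|^2+\|u_2\|^2)$ to obtain
\[
\RE(Tu,u)\ge\big(\min\{m_1,m_2\}-\kappa\big)\|u\|^2,\qquad \kappa:=\tfrac12\big(\|K_{12}\|+\|K_{21}\|\big),
\]
so that \eqref{eckjdyfr} gives $m:=\min\{m_1,m_2\}-\kappa>0$ and $T$ is coercive. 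Sectoriality follows from the analogous estimate for the imaginary part: with $\alpha_i$ the semi-angles of $T_i$ one has $|\IM(T_iu_i,u_i)|\le\tan\alpha_i\,\RE(T_iu_i,u_i)$, and combining these with the cross-term bound and using $\|u\|^2\le m^{-1}\RE(Tu,u)$ to absorb the perturbation produces $|\IM(Tu,u)|\le\tan\beta\,\RE(Tu,u)$ with a finite $\beta<\pi/2$; hence $W(T)\subset\overline{\mathcal{S}(\beta)}$.

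The step I expect to be the main obstacle is upgrading accretivity to $m$-accretivity, since a bounded perturbation need not preserve $m$-accretivity. Here I would use the criterion that a densely defined closed operator whose adjoint is accretive is $m$-accretive (Section~\ref{sec:prelim}). The operator $T$ is densely defined and, being a bounded perturbation of the closed operator $T_0$, closed; and exactly as in Proposition~\ref{ltymhj} the boundedness of $T_1,K_{12},K_{21}$ yields $T^*=\begin{bmatrix}T_1^*&K_{21}^*\cr K_{12}^*&T_2^*\end{bmatrix}$ on $\sH_1\oplus\dom T_2^*$. The point is that the adjoint of an $m$-sectorial operator is $m$-sectorial with the same coercivity bound: if $\mathfrak{t}_i$ is the closed sectorial form associated with $T_i$, then coercivity of $T_i$ extends to $\RE\mathfrak{t}_i[f]\ge m_i\|f\|^2$ on $\cD[\mathfrak{t}_i]$, and for $f\in\dom T_i^*\subset\cD[\mathfrak{t}_i^*]=\cD[\mathfrak{t}_i]$ one has $\RE(T_i^*f,f)=\RE\overline{\mathfrak{t}_i[f]}\ge m_i\|f\|^2$. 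Applying the coercivity estimate of the previous paragraph verbatim to $T^*$ (with $T_i^*,K_{ij}^*$) shows $T^*$ is coercive, in particular accretive; hence $T$ is $m$-accretive, and with the sector inclusion it is $m$-sectorial and coercive.

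It remains to compute the domain intersection and the resolvent. Since $T_1$ is bounded, $\dom T_1=\sH_1=\dom T_1^*$, so $\dom T=\sH_1\oplus\dom T_2$ and $\dom T^*=\sH_1\oplus\dom T_2^*$, whence
\[
\dom T\cap\dom T^*=\sH_1\oplus(\dom T_2\cap\dom T_2^*)=\sH_1,
\]
giving $\dim(\dom T\cap\dom T^*)=\dim\sH_1=\dim(\dom T_1\cap\dom T_1^*)$ and $\codim(\dom T\cap\dom T^*)=\dim\sH_2=\infty$ by the first step. For the compact-resolvent claim I would note that coercivity gives $0\in\rho(T)\cap\rho(T_0)$, that $T_0^{-1}=\mathrm{diag}(T_1^{-1},T_2^{-1})$ is compact, and factor $T=T_0(I+T_0^{-1}K)$ on $\dom T_0$. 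The bounded operator $I+T_0^{-1}K$ is a compact perturbation of the identity and is injective, since $(I+T_0^{-1}K)v=0$ forces $v\in\dom T_0$ with $Tv=0$ and hence $v=0$; by the Fredholm alternative it is boundedly invertible, so $T^{-1}=(I+T_0^{-1}K)^{-1}T_0^{-1}$ is a bounded operator composed with a compact one, hence compact.
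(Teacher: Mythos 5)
Your overall strategy --- treating $T$ as a bounded off-diagonal perturbation of $T_0=\mathrm{diag}(T_1,T_2)$, the coercivity estimate, absorbing the cross terms for sectoriality, and the Fredholm factorization for compactness of $T^{-1}$ --- is exactly the paper's (its proof is a compressed version of this). However, there is a genuine error: you assume throughout that $T_1$ is bounded, which is \emph{not} a hypothesis of Proposition \ref{jdhfnyj}. This assumption appears when you write $\dom T_0=\sH_1\oplus\dom T_2$, when you invoke ``the boundedness of $T_1$'' to compute $T^*$, and decisively in the last step, where ``Since $T_1$ is bounded, $\dom T_1=\sH_1=\dom T_1^*$'' leads you to $\dom T\cap\dom T^*=\sH_1$ and to the chain $\dim(\dom T\cap\dom T^*)=\dim\sH_1=\dim(\dom T_1\cap\dom T_1^*)$. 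Boundedness of $T_1$ is a hypothesis of Proposition \ref{ltymhj} (the structure result), not of the present proposition, in which $T_1$ is an arbitrary $m$-sectorial coercive operator. Indeed, the point of the proposition is precisely that $T_1$ may be unbounded: in the intended application $T_1$ is an operator as in Theorem \ref{subsec:max-sect-dim=n} (constructed from a boundedly invertible $L$, hence coercive) with $\dim(\dom T_1\cap\dom T_1^*)=n<\infty$ although $\dim\sH_1=\infty$; for such $T_1$ your equalities $\dom T\cap\dom T^*=\sH_1$ and $\dim\sH_1=\dim(\dom T_1\cap\dom T_1^*)$ are false.

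The step can be repaired without any boundedness of $T_1$. Since the off-diagonal part $K:=T-T_0$ is bounded and everywhere defined and $T_0$ is densely defined and closed, one has $T^*=(T_0+K)^*=T_0^*+K^*$, so $\dom T^*=\dom T_1^*\oplus\dom T_2^*$, and therefore
\[
\dom T\cap\dom T^*=(\dom T_1\cap\dom T_1^*)\oplus(\dom T_2\cap\dom T_2^*)=(\dom T_1\cap\dom T_1^*)\oplus\{0\}.
\]
This yields $\dim(\dom T\cap\dom T^*)=\dim(\dom T_1\cap\dom T_1^*)$, and since the intersection is contained in $\sH_1\oplus\{0\}$ while $\dim\sH_2=\infty$, also $\codim(\dom T\cap\dom T^*)=\infty$. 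With this correction the rest of your argument stands: the coercivity estimate is the paper's, the sectoriality bound is fine, your route to $m$-accretivity via accretivity of $T^*$ is legitimate (and your form argument for coercivity of $T_i^*$ indeed does not use boundedness of $T_1$; alternatively, $T_i-m_iI$ is $m$-accretive because $\{z\in\dC:\RE z<m_i\}\subset\rho(T_i)$, so $T_i^*-m_iI$ is accretive), and the Fredholm argument for compactness of $T^{-1}$ is correct.
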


\begin{proof}
All claims follow from the fact that $T$ is a bounded perturbation of the $m$-sectorial coercive diagonal operator matrix
${\rm diag} \big( T_1, T_2 \big)$ in $\sH$ and that, for $x=(x_1,x_2) \in \dom T = \dom T_1 \oplus \dom T_2$,
\begin{align*}
   \RE (Tx,x) &\ge \min \{ m_1,m_2\} \|x\|^2 - \big(\|K_{12}]|+\|K_{21}\| \big) \|x_1\|\|x_2\|\\
              &\ge \Big( \min \{m_1,m_2\} - \frac 12 \big(\|K_{12}]|+\|K_{21}\| \big) \Big) \|x\|^2.
\qedhere
\end{align*}
\end{proof}

\smallskip

\begin{remark}
i) A sufficient condition for \eqref{eckjdyfr} is $\max\big\{\|K_{12}]|,\|K_{21}\|\big\}< \min\{m_1,m_2\}$.

ii)
If $K_{21}\!=\!-K_{12}^*$, then all claims in Proposition \ref{jdhfnyj} hold without assumption \eqref{eckjdyfr}, i.e.\ without any restriction on the norm of $K_{12}$; in this case, for $x\!=\!(x_1,x_2) \!\in\! \dom T \!=\! \dom T_1 \oplus \dom T_2$,
\[
\RE (Tx,x) = \RE (T_1x_1,x_1) + \RE (T_2x_2,x_2) \ge \min \{m_1,m_2\} \|x\|^2.
\]
\end{remark}


\section{Maximal sectorial and maximal accretive operators with $\codim (\dom T \cap \dom T^*) \in \dN_{0} \cup \{\infty\}$}
\label{sec:extensions-S_z}

In this section we construct unbounded $m$-accretive and even $m$-sectorial operators $T$ such that
$\dom T\cap\dom T^* $ is an infinite dimensional closed subspace and all three possibilities for its complement
are exhausted:
\begin{enumerate}
\item {$\codim\left(\dom T\cap\dom T^*\right)=\infty$,}
\item $\codim\left(\dom T\cap\dom T^*\right)=n \in \dN$,
\item $\codim\left(\dom T\cap\dom T^*\right)=0$;
\end{enumerate}
in the latter case, $\dom T\cap\dom T^*$ is dense, but it will not be a core of $T$. In cases (2) and (3),
the operator $T$ can be arranged to have compact resolvent, while in (1) this is not possible due to the closed graph theorem,
see the beginning of Section \ref{sect.infinite}.

Our main tool is a special type of $m$-accretive extensions of closed densely defined  sectorial operators
which were defined in \cite{MR1772627} and further studied in \cite{MR1670389, MR2240273, MR3696196}.

For a closed densely defined sectorial operator $S$ in a separable infinite dimensional complex  Hilbert space $\sH$ we set
\begin{equation}
\label{orth}
  \sM_{\bz }:=\ran (S-\overline{z}I), \quad \sN_z:=\sH\ominus \sM_{\bz }=\ker (S^*-z I ),\quad z\in \dC,
\end{equation}
and we define a family of linear operators $S_z$ in $\sH$ for $z\in\dC,$ $\RE z\le 0$, by
\begin{equation}
\label{eq:Szdef1}
  \dom S_z:=\dom S\dotplus\sN_z,\quad
  S_zf\!:=\!S{f_S}\!-\!z\varphi_z,\ \ f\!=f_S+\!\varphi_z, \;f_S\!\in\!\dom S, \quad \f_z\!\in\!\sN_z.
\end{equation}
Since $\dom S\cap\sN_z=\{0\}$, the operator $S_z$ is well-defined.
We mention that the Friedrichs and Krein-von Neumann extensions of $S$ are the limits in strong resolvent sense \cite[Sect.~9.3]{MR1887367}
\[
  S_K=\mbox{sr-\!}\lim\limits_{a\nearrow-0}S_a,\quad
  S_F=\mbox{sr-\!\!}\lim\limits_{a\searrow-\infty}S_a
\]%
of the operators {$S_a$, $a\in (-\infty,0)$}, see \cite{MR1670389} and \cite{MR2240273}.
The following properties of the family $S_z$, $z\in\dC,$ $\RE z\le 0$, play a crucial role in the sequel.

\begin{proposition}
\label{prop:ext}
Let $z\in\dC{\setminus\{0\}}$, $\RE z\le 0$. \vspace{1mm} Then
\begin{enumerate}
\newcommand\wtS{{\wt S}}
\item[{\rm i)}]  $S_z$ is an $m$-accretive extension \vspace{1mm} of~$\,S$;
\item[{\rm ii)}] $S^*_z$,
given by $S^*_z\!=\!S^*\uphar\dom S^*_z$ on $\dom S^*_z\!=\!\big\{h\!\in\!\dom S^*\!: (S^*+\bz  I)h\!\in\! \sM_{\bar z} \big\}$, satisfies
\begin{alignat}{2}
\label{sopryazh}
   \dom S^*_z&=\big(2\,\RE z\, I-P_{\sN_z} ({\wt S}^*\!+ \bz  I)\big)\dom {\wt S}^* \quad
	 && \mbox{if }\ \RE z\!<\!0;\\
\label{sopryazh2}
   \dom S^*_{\I x}&=\big(\wt S^*\!-\I x I\big)^{-1}\big(S+\I x I\big)\dom S\dotplus \sN_{\I x} \quad
	 && \mbox{if }\ \RE z\!=\!0, \, z\!=\!\I x,\, x\!\in\!\dR\!\setminus\!\{0\}, \hspace{-8mm}
\vspace{-3mm}
\end{alignat}
where $\wt S$ is any $m$-sectorial extension of \vspace{1mm} $\,S$, e.g.\  its Friedrichs extension~$S_F$;
\item[{\rm iii)}] if $S$ is coercive, i.e.\ $\RE(Sf,f))\ge m||f||^2$, $f\in\dom S$, for some $m>0$, then the operators
$S_z$ for $\RE z < 0$ and  $T_{\I x}:=S_{\I x}+\I x I$ for $x\in\dR\setminus\{0\}$ are $m$-sectorial.
\end{enumerate}
\end{proposition}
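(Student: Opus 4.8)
The plan is to treat the three parts together, since the explicit description of $S_z^*$ in (ii) is exactly what powers the $m$-accretivity in (i). First I would record the two elementary facts that $S_z$ is accretive and closed. Writing $f=f_S+\varphi_z$ with $f_S\in\dom S$, $\varphi_z\in\sN_z$ and using $S^*\varphi_z=z\varphi_z$, one gets $(S_zf,f)=(Sf_S,f_S)+\bar z(f_S,\varphi_z)-z(\varphi_z,f_S)-z\|\varphi_z\|^2$; the two cross terms combine to a purely imaginary quantity, so $\RE(S_zf,f)=\RE(Sf_S,f_S)-\RE z\,\|\varphi_z\|^2\ge 0$ because $S$ is accretive and $\RE z\le 0$. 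For closedness the key observation is that $\bar z\notin\overline{W(S)}$, hence $S-\bar z I$ is injective with closed range $\sM_{\bar z}$ and so has bounded inverse there; projecting a convergent graph sequence onto $\sM_{\bar z}$ recovers convergence of the $\dom S$-component, and then, since $\sN_z$ is closed, of the $\sN_z$-component. The adjoint is found by a direct pairing: $(S_zf,h)=(f,h^*)$ for all $f=f_S+\varphi_z$ forces $h\in\dom S^*$ with $h^*=S^*h$ (take $\varphi_z=0$) and $(S^*+\bar z I)h\perp\sN_z$ (take $f_S=0$); as $\sN_z^\perp=\sM_{\bar z}$ this yields precisely the stated $\dom S_z^*$ and $S_z^*=S^*\uphar\dom S_z^*$.

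The heart of (i) is then the accretivity of $S_z^*$. For $h\in\dom S_z^*$ I would write $(S^*+\bar z I)h=(S-\bar z I)w$ with $w\in\dom S$, which is possible exactly because the left-hand side lies in $\sM_{\bar z}=\ran(S-\bar z I)$. Substituting $S^*h=Sw-\bar z(w+h)$ into $(S_z^*h,h)=(S^*h,h)$ and using $(Sw,h)=(w,S^*h)$ collapses everything to the clean identity $\RE(S_z^*h,h)=\RE(Sw,w)-\RE z\,\|w+h\|^2\ge 0$. This one computation covers both $\RE z<0$ and $z=\I x$ uniformly. With $S_z$ densely defined, closed, and $S_z^*$ accretive, $S_z$ is $m$-accretive by the criterion that a densely defined closed operator with accretive adjoint is $m$-accretive, which proves (i).

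For the explicit domain formulas in (ii) I would fix an $m$-sectorial extension $\wt S$ (e.g.\ $S_F$), so that $\wt S^*\subseteq S^*$; since $\sigma(\wt S^*)\subseteq\overline{\mathcal{S}(\alpha)}$ with $\alpha<\pi/2$, and $z$ lies outside this sector whenever $\RE z<0$ or $z=\I x$, we have $z\in\rho(\wt S^*)$. This gives the direct decomposition $\dom S^*=\dom\wt S^*\dotplus\sN_z$: for $h\in\dom S^*$, the vector $h-(\wt S^*-zI)^{-1}(S^*-zI)h$ lies in $\ker(S^*-zI)=\sN_z$. Writing $h=u+\varphi$ and using $S^*\varphi=z\varphi$, the defining condition $P_{\sN_z}(S^*+\bar z I)h=0$ becomes $P_{\sN_z}(\wt S^*+\bar z I)u+2\RE z\,\varphi=0$. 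For $\RE z<0$ this determines $\varphi$ in terms of $u$ and gives \eqref{sopryazh}; for $z=\I x$ the term $2\RE z\,\varphi$ drops out, leaving the constraint $(\wt S^*-\I x I)u\in\sM_{-\I x}$ with $\varphi$ free, which is \eqref{sopryazh2}.

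Finally, for (iii) the accretivity identity above gives $\RE(S_zf,f)\ge m\|f_S\|^2+|\RE z|\,\|\varphi_z\|^2$ when $S$ is coercive; for $\RE z<0$ a Young-inequality estimate on the cross term $2\IM(\bar z(f_S,\varphi_z))$ bounds $|\IM(S_zf,f)|$ by a fixed multiple of $\RE(S_zf,f)$, so $S_z$ is sectorial and, being $m$-accretive, $m$-sectorial. For $T_{\I x}=S_{\I x}+\I x I$ a short computation shows $T_{\I x}\varphi=0$ for $\varphi\in\sN_{\I x}$ and $(T_{\I x}f,f)=(Sf_S,f_S)+\I x\|f_S\|^2$, whence $|\IM(T_{\I x}f,f)|\le(\tan\alpha+|x|/m)\,\RE(T_{\I x}f,f)$, again sectorial. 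I expect the main obstacle to be the explicit formulas in (ii): getting the decomposition $\dom S^*=\dom\wt S^*\dotplus\sN_z$ right and correctly separating the cases $\RE z<0$ and $z=\I x$, together with the boundedness of $(S-\bar z I)^{-1}$ needed for closedness. By contrast, the accretivity identities become short once the substitution $(S^*+\bar z I)h=(S-\bar z I)w$ is spotted.
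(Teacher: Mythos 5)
Your proposal is correct, and it reaches the result along a route that overlaps with the paper's but differs at the decisive step. The shared skeleton: accretivity of $S_z$ by the same direct computation ($\RE(S_zf,f)=\RE(Sf_S,f_S)-\RE z\,\|\f_z\|^2\ge 0$); $m$-accretivity via the criterion that an accretive, densely defined, closed operator with accretive adjoint is $m$-accretive (note this criterion does require accretivity of $S_z$ itself, which you established first, so your abridged phrasing of it is harmless); part (ii) via the decomposition $\dom S^*=\dom\wt S^*\dotplus\sN_z$, which the paper derives exactly as you do from $z\in\rho(\wt S^*)$, followed by solving the projected constraint $P_{\sN_z}(\wt S^*+\bz I)u+2\,\RE z\,\f=0$ in the two cases; and your $T_{\I x}$ computation in (iii), which is word-for-word the paper's. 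The genuine difference is how accretivity of $S_z^*$ is proved. The paper writes $h=h_*+\psi_z$ with $h_*\in\dom\wt S^*$, $\psi_z\in\sN_z$ and leans on accretivity of $\wt S^*$, so the auxiliary $m$-sectorial extension enters already in part (i). You instead use the characterization $\dom S_z^*=\{h\in\dom S^*:(S^*+\bz I)h\in\sM_{\bz}\}$ (which you derive first, by the same pairing argument the paper only gives later, in its proof of (ii)) to substitute $(S^*+\bz I)h=(S-\bz I)w$ with $w\in\dom S$; your identity $\RE(S_z^*h,h)=\RE(Sw,w)-\RE z\,\|w+h\|^2$ checks out and needs only accretivity of $S$ itself. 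This buys a part (i) that is self-contained — no extension $\wt S$, no decomposition of $\dom S^*$ — and uniform in $z$, at the price of having to establish the adjoint characterization up front. You also supply two items the paper leaves implicit or outsources: an explicit proof that $S_z$ is closed (bounded invertibility of $S-\bz I$ on its closed range $\sM_{\bz}$; to make your projection step precise, project the combination $S_zf_n-\bz f_n$ rather than $S_zf_n$ onto $\sM_{\bz}$, which yields exactly $(S-\bz I)f_{S,n}$), a hypothesis the $m$-accretivity criterion genuinely requires but which the paper never verifies; and a direct Young-inequality proof of sectoriality of $S_z$ for $\RE z<0$ in (iii), where the paper simply cites \cite[Thm.~1.1]{MR1772627}.
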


\begin{proof}
Since $S$ is sectorial, it admits at least one $m$-sectorial extension $\wt S$, e.g.\ the Friedrichs extension~$S_F$  \cite[Sect.\ VI.2.3]{MR1335452}.
Then ${\wt S}^*$ is also $m$-sectorial \cite[Thm.~VI.2.5]{MR1335452} and therefore $\{\zeta \!\in\! \dC \!\setminus\! \{0\}: \RE \zeta \!<\! 0 \}$ $\subset \rho ({\wt S}^*)$. Therefore, for every $h\in\dom S^*$, there is a unique  $f_{\wt S^*}\in\dom \wt S^*$ such that $(S^*-z I)h=(\wt S^*-z I)f_{\wt S^*}$.
Since $\wt S^*=S^*\uphar\dom \wt S^*$, this implies $(S^*-zI)(h-f_{\wt S^*})=0$ and thus $\f_z:=h-f_{\wt S^*} {\in} \sN_z$.
The other inclusion being obvious, we have thus shown that
\begin{equation}
\label{domS*}
  \dom S^*=\dom \wt S^*\dotplus\sN_z \quad \mbox{if} \quad  \RE z\le 0, \, z\ne 0.
\end{equation}

 i) Let $z\in \dC\setminus \{0\}$, $\RE z \le 0$. Since for $g \in \dom S_z$, $g=f_s + \phi_z$ with $f\in \dom S$, $\phi_z \in \sN_z$,
\begin{align*}
 \RE (S_zg,g) &= \RE \big( (Sf_S,f_S) - z (\phi_z,Sf_S) - \bz (Sf_S,\phi_z) + |z|^2 (\phi_z,\phi_z) \big) \\
 &= \RE  (Sf_S,f_S) +  |z|^2 (\phi_z,\phi_z) \ge 0,
\end{align*}
$S_z$ is accretive. To show that $S_z$ is $m$-accretive, we prove that $S_z^*$ is accretive. Since $S_z^* \subset S^*$,
every $h\in \dom S_z^*$ can be written as $h=h_* + \psi_z$ with $h_* \in \dom \wt S$, $\psi_z \in \sN_z$.
Further, since $\sN_z \subset \dom S_z \cap \dom S^*$ and ${\wt S}^* \subset S^*$, we have
\[
 (S_z^*h,h)= (S^*(h_*+\psi_z),h_*) + (h_*+\psi_z, S_z \psi_z) = (\wt S^*h_*,h_*) + z(\psi_z,h_*) - \bz (h_*,\psi_z) - \bz (\psi_z,\psi_z).
\]
Because $\wt S$ is $m$-accretive, ${\wt S}^*$ is accretive and hence
\[
 \RE (S_z^*h,h) = \RE (\wt S^*h_*,h_*) - \RE z (\psi_z,\psi_z) \ge 0;
\]
for different proofs which require the distinction between $\RE z <0$ and $z\in \I\dR \setminus \{0\}$ comp.\ \cite[p.~4]{MR1772627}
and \cite[Prop.\ 2.7]{MR3696196}, respectively.

ii) Since $S_z^* \subset S^*$, we have $h\in\dom S_z^*$ if and only if $h\in \dom S^*$ and the mapping $\dom S_z=\dom S \dotplus \sN_z \to \dC$, $g=f\!+\!\f_z \mapsto (S_zg,h)$ is continuous with
\[
  (g, S^*h) = (S_z g, h) = (Sf\!-\! z \f_z,h) = (f,S^*h) - (\f_z, \bz h) = (g,S^*h) - (\f_z,(S^*+\bz I)h)
\]
or, equivalently, $(\f_z,(S^*+\bz I)h)=0$. Since $\sN_z^\perp = \ran(S-\bz  I) =\sM_{\bar z}$, this shows that
\begin{equation}
\label{sopryazh1}
  \dom S^*_z 
   = \big\{h\in\dom S^*:  P_{\sN_z} (S^*+\bz  I)h =0 \big\},
\end{equation}
comp.\ \cite{MR3696196} for a different formulation and proof.

To show \eqref{sopryazh}, \eqref{sopryazh2}, let $z\in\dC$, $\RE z \le 0$, $z\ne 0$. By  \eqref{sopryazh1} and \eqref{domS*}, it follows that
$h \in \dom S_z^*$ if and only if $h=h_{{\wt S}^*} + \psi_z$ with $h_{{\wt S}^*}\in\dom \wt S^*$, $\psi_z \in \sN_z$ and
\begin{align}
\label{equivSz*}
  0=P_{\sN_z} (S^*+\bz  I)(h_{{\wt S}^*}+\psi_z)= P_{\sN_z} ({\wt S}^*+\bz  I)h_{{\wt S}^*} + 2 \,\RE z \,\psi_z,
\end{align}
where we have used that ${\wt S}^* \!\subset S^*$, $\sN_z \subset \dom S^*$ and $(S^*\!+\bz I) \psi_z = (z+\bz) \psi_z = 2 \,\RE z \,\psi_z \in \sN_z$.
If $\RE z<0$, then \eqref{equivSz*} is equivalent to $\psi_z= - \frac 1{2\,\RE z} P_{\sN_z} ({\wt S}^*\!+\bz  I)h_{{\wt S}^*}$, which together with $h=h_{{\wt S}^*} + \psi_z$ proves \eqref{sopryazh};
if $\RE z=0$, $z=\I x$, $x\ne 0$, then \eqref{equivSz*} is equivalent to $(\wt S^*\!+\!\bz I) h_{{\wt S}^*}= P_{\sM_z} (\wt S^*\!+\!\bz I) h_{{\wt S}^*} = (S\!-\!\bz I)f$ for some $f\in \dom S$,
which together with $h=h_{{\wt S}^*} + \psi_z$ proves~\eqref{sopryazh2}.

iii) Suppose that the sectorial operator $S$ is coercive. By i), it remains to be shown that
$S_z$ for $\RE z<0$ and $T_{\I x}$ for $x\in \dR\setminus\{0\}$ are sectorial. For the case $\RE z <0$ this was proved
in \cite[Thm.\ 1.1]{MR1772627}.
In the other case, for $g=f_S+\f_{\I x}\in\dom S_{\I x}$ with $f_S\in\dom S$, $\f_{\I x}\in\sN_{\I x}= (\ran (S+ \I x I))^\perp$, we have
\[
  \left(T_{\I x}g,g\right)
	=\big( (S+\I xI)f_S-\I x \f_{\I x}+\I x\f_{\I x},f_S+\f_{\I x} \big)
  =((S+\I xI)f_S,f_S)=(Sf_S,f_S)+\I x||f_S||^2
\]
and hence
\[
  \IM \left(T_{\I x}g,g\right)=\IM (Sf_S, f_S)+x||f_s||^2,\quad
  \RE \left(T_{\I x}g,g\right)=\RE (Sf_S, f_S).
\]
{By the assumptions on $S$, there exist $k$, $m>0$ such that}
\begin{align*}
  |\IM \left(T_{\I x}g,g\right)|
  \!\le\!|\IM (Sf_S, f_S)|+|x|||f_s||^2 \!\le\! \left(k \!+\!\cfrac{|x|}{m}\right)\RE (Sf_S,f_S)
  \!=\!\left(k \!+\!\cfrac{|x|}{m}\right)\RE \left(T_{\I x}g,g\right),
\end{align*}
and thus $T_{\I x}$ is sectorial.
\end{proof}

\begin{remark}
It was shown in \cite[Thm.~1.1]{MR1772627} for the case of an operator and in \cite[Thm.~4]{MR1670389} for the case of a linear relation
that the following are equivalent:
\begin{enumerate}
\def\labelenumi{\rm (\roman{enumi})}
\item for every  $z\!\in\!\dC$, $\RE z\!<\!0$, the operator $S_z$ is $m$-sectorial;
\item for every $z\!\in\!\dC$, $\RE z\!<\!0$, there exists $k(z)\!>\!0$ with $(Sf,f)\!\ge\! k(z)||P_{\sN_z}f||^2\!\!$, $f\!\in\!\dom S$.
\item $\dom S^*\subset \dom {\mathfrak s}_K$,
\end{enumerate}
where ${\mathfrak s}_K$ is the closure of the form associated with the Krein-von Neumann ex\-tension $S_K$ of~$S$.
In particular, $\dom S^*\!\subset\! {\mathfrak s}_K$ if $S$ is coercive, i.e.\ $\RE (Sf,f)\!\ge\! m||f||^2$, $f\!\in\!\dom S$, for some $m\!>\!0$.
\end{remark}

\begin{remark}\label{dffcf}
If $S$ is a non-negative closed densely defined operator, then  \eqref{domS*} holds with
an arbitrary non-negative selfadjoint extension $\wt S$ of $S$ and hence
\begin{alignat}{2}
\label{sopryazh-symm}
   \dom S^*_z&=\big(2\,\RE z\, I-P_{\sN_z} ({\wt S}+\bz  I)\big)\dom {\wt S} \quad
	 && \mbox{if }\ \RE z\!<\!0;\\
\label{sopryazh2-symm}
   \dom S^*_{\I x}&=\big(\wt S-\I x I\big)^{-1}\big(S+\I x I\big)\dom S\dotplus\sN_{\I x} \quad
	 && \mbox{if }\ \RE z\!=\!0, \, z\!=\!\I x,\, x\!\in\!\dR\!\setminus\!\{0\}.
\vspace{-3mm}
\end{alignat}
\end{remark}

\smallskip

The next two theorems, are the main results of this section. For three different cases of $z\in \dC \setminus \{0\}$, $\RE z \le 0$,
we determine the domain intersections $\dom S_z\cap\dom S^*_z$ and their properties when $S$ is a non-negative closed densely defined symmetric operator.

In one of these cases, the property $\dom S^2=\{0\}$ plays a role; note that the existence of such operators $S$ was proved in \cite{MR0003468, MR0003469}, see also Remark \ref{dom0} below.

\begin{theorem}
\label{intrsc}
Let $S$ be a non-negative closed densely defined symmetric operator in $\sH$
and let $S_z$ for $z\in \dC \setminus \{0\}$, $\RE z \le 0$,
be the $m$-accretive extensions of $S$ defined in \eqref{eq:Szdef1}.
\begin{enumerate}
\item[{\rm i)}] If $\,\RE z<0,$  then
\begin{equation}
\label{int11}
   \dom S_z\cap\dom S^*_z= \big( \RE z \, P_{\sM_{\bar z}}+\I \,\IM z \, P_{\sN_z} \big) \,\dom S.
\end{equation}
\item[{\rm ii)}] If $a<0$, then
\begin{equation}\label{int12}
   \dom S_a\cap\dom S^*_a= P_{\sM_{a}}\dom S=\dom S_a\cap\sM_a=\dom S^*_a\cap\sM_a;
\end{equation}
further, $T_a\!:=\!P_{\sM_a}S_a\uphar(\dom S_a\cap\sM_a)$ is selfadjoint and non-negative in the Hilbert space $\sM_a$ and
\vspace{1mm} $T_a\!=\!P_{\sM_a}S^*_a\uphar(\dom S_a\cap\sM_a)$.
\item[{\rm iii)}] If $\,z=\I x$, $x\in\dR\setminus\{0\}$, then
\begin{equation}
\label{int22}
   \dom S_{\I x}\cap\dom S^*_{\I x}
   =\left((S+\I xI)\dom S^2\right)\oplus\sN_{\I x}
   =\big(\sM_{-\I x}\cap\dom S\big)\oplus\sN_{\I x}; \hspace{-8mm}
\end{equation}
{
in \vspace{-1mm} particular,
\begin{equation}\label{int23}
\dom S_{\I x}\cap\dom S^*_{\I x}=\sN_{\I x} \iff \dom S^2=\{0\}.
\end{equation}
}
\end{enumerate}
\vspace{-4mm}
\end{theorem}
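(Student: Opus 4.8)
The plan is to handle all three parts through the single description of $\dom S_z^*$ established in Proposition~\ref{prop:ext}, namely \eqref{sopryazh1}, which for symmetric $S$ reads $\dom S_z^*=\{h\in\dom S^*:P_{\sN_z}(S^*+\bz I)h=0\}$. Since $\dom S\subseteq\dom S^*$ and $\sN_z=\ker(S^*-zI)\subseteq\dom S^*$, every $u\in\dom S_z=\dom S\dotplus\sN_z$ automatically lies in $\dom S^*$, so I can write $u=f_S+\f_z$ with $f_S\in\dom S$, $\f_z\in\sN_z$, and reduce membership in $\dom S_z^*$ to the single equation $P_{\sN_z}(S^*+\bz I)u=0$. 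The one computation driving everything is $(S^*+\bz I)u=(S+\bz I)f_S+2\,\RE z\,\f_z$ (using $S^*f_S=Sf_S$ and $S^*\f_z=z\f_z$), combined with $P_{\sN_z}(S-\bz I)f_S=0$, which holds because $(S-\bz I)f_S\in\ran(S-\bz I)=\sM_{\bz}=\sN_z^\perp$; hence $P_{\sN_z}(S^*+\bz I)u=2\bz\,P_{\sN_z}f_S+2\,\RE z\,\f_z$.

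For part (i) with $\RE z<0$, the membership equation forces $\f_z=-\frac{\bz}{\RE z}P_{\sN_z}f_S$, and substituting into $u=P_{\sM_{\bz}}f_S+\big(1-\frac{\bz}{\RE z}\big)P_{\sN_z}f_S$ with $\RE z-\bz=\I\,\IM z$ gives $\RE z\cdot u=(\RE z\,P_{\sM_{\bz}}+\I\,\IM z\,P_{\sN_z})f_S$. As $f_S$ runs through $\dom S$ the assignment $f_S\mapsto u$ is a bijection onto $\dom S_z\cap\dom S_z^*$, and since multiplying a subspace by the nonzero scalar $1/\RE z$ leaves it unchanged, this yields \eqref{int11}.

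Part (ii) specialises (i) at $z=a<0$, where $\IM z=0$, to $\dom S_a\cap\dom S_a^*=P_{\sM_a}\dom S\subseteq\sM_a$. The identity $\dom S_a\cap\sM_a=P_{\sM_a}\dom S$ is immediate, since $u=f_S+\f_a\in\sM_a$ forces $\f_a=-P_{\sN_a}f_S$; for $\dom S_a^*\cap\sM_a$ I would prove the only nontrivial inclusion $\dom S_a^*\cap\sM_a\subseteq\dom S_a$ by writing $h=h_{\wt S}+\psi_a$ through \eqref{domS*} (in the symmetric form of Remark~\ref{dffcf}, with $\wt S$ a non-negative selfadjoint extension): the conditions $P_{\sN_a}h=0$ and $P_{\sN_a}(S^*+aI)h=0$ collapse to $P_{\sN_a}(\wt S-aI)h_{\wt S}=0$, i.e.\ $(\wt S-aI)h_{\wt S}\in\sM_a=\ran(S-aI)$, whence injectivity of $\wt S-aI$ (as $a<0\le\wt S$) gives $h_{\wt S}\in\dom S$. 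For the selfadjointness of $T_a=P_{\sM_a}S_a\uphar(\dom S_a\cap\sM_a)$, note $T_a(P_{\sM_a}f_S)=P_{\sM_a}Sf_S$; symmetry follows from $(Sf,P_{\sN_a}g)=a(P_{\sN_a}f,g)=(f,P_{\sN_a}Sg)$ (using $S^*\uphar\sN_a=aI$), non-negativity from $(T_au,u)=(Sf,f)-a\|P_{\sN_a}f\|^2\ge0$, and maximality from $(T_a-aI)P_{\sM_a}f_S=P_{\sM_a}(S-aI)f_S=(S-aI)f_S$, whose range is all of $\ran(S-aI)=\sM_a$; a non-negative symmetric operator with $\ran(T_a-aI)=\sM_a$ for $a<0$ is selfadjoint. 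The final identity $T_a=P_{\sM_a}S_a^*\uphar(\dom S_a\cap\sM_a)$ is a one-line check, since $P_{\sM_a}S_a^*u=P_{\sM_a}S^*u=P_{\sM_a}Sf_S=P_{\sM_a}S_au$ on this common domain.

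For part (iii) with $z=\I x$, $x\ne0$, the driving computation simplifies because $(S^*-\I x I)\f=0$ for $\f\in\sN_{\I x}$, so membership reduces to $P_{\sN_{\I x}}(S-\I x I)f_S=0$, i.e.\ $(S-\I x I)f_S\in\sN_{\I x}^\perp=\sM_{-\I x}=\ran(S+\I x I)$, with $\f$ unconstrained; hence $\dom S_{\I x}\cap\dom S_{\I x}^*=F\oplus\sN_{\I x}$ with $F:=\{f_S\in\dom S:(S-\I x I)f_S\in\ran(S+\I x I)\}$, the sum being orthogonal as $F\subseteq\sM_{-\I x}=\sN_{\I x}^\perp$. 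The heart of the matter is to identify $F=(S+\I x I)\dom S^2=\sM_{-\I x}\cap\dom S$: the second equality is elementary domain bookkeeping, while for the first I would, given $(S-\I x I)f_S=(S+\I x I)g$ with $g\in\dom S$, set $p:=f_S-g$, derive $Sp=\I x(f_S+g)\in\dom S$ so that $p\in\dom S^2$, and check $f_S=(S+\I x I)\big(\frac{p}{2\I x}\big)$ with $\frac{p}{2\I x}\in\dom S^2$; the reverse inclusion uses $(S-\I x I)(S+\I x I)=S^2+x^2I$ on $\dom S^2$. Finally \eqref{int23} is immediate from injectivity of $S+\I x I$. The main obstacles I expect are precisely this algebraic identification of $F$ in (iii), i.e.\ producing the preimage $p/(2\I x)$, and in (ii) the surjectivity step that upgrades the symmetric accretive $T_a$ to a selfadjoint operator.
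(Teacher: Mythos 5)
Your proposal is correct; all three parts go through as written. It does, however, take a genuinely different route from the paper's. The paper works with the \emph{parametrized} descriptions of $\dom S_z^*$ from Proposition~\ref{prop:ext} ii) in their symmetric form (Remark~\ref{dffcf}): for $\RE z<0$ it writes every element of $\dom S_z^*$ as $\big(2\RE z\,I-P_{\sN_z}(\wt S+\bz I)\big)f_{\wt S}$ and for $z=\I x$ as $(\wt S-\I xI)^{-1}(S+\I xI)f_S+\psi_{\I x}$, and then computes the intersection by matching these against the decomposition \eqref{eq:Szdef1} of $\dom S_z$, using $\dom\wt S\cap\sN_z=\{0\}$ from \eqref{domS*} as the separating tool; the auxiliary selfadjoint extension $\wt S$ (and, for $z=\I x$, its resolvent) is present throughout. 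You instead use the \emph{implicit} kernel characterization \eqref{sopryazh1} together with the observation — available only because $S$ is symmetric — that $\dom S_z=\dom S\dotplus\sN_z\subseteq\dom S^*$, so that the intersection is cut out of $\dom S_z$ by the single equation $P_{\sN_z}(S^*+\bz I)u=0$; your one computation $P_{\sN_z}(S^*+\bz I)(f_S+\f_z)=2\bz\,P_{\sN_z}f_S+2\,\RE z\,\f_z$ then drives parts (i) and (iii) uniformly, with no $\wt S$ at all, and makes transparent why the $\sN_z$-component is slaved to $f_S$ when $\RE z<0$ but unconstrained when $\RE z=0$ (which is exactly why $\sN_{\I x}$ appears as a free orthogonal summand in \eqref{int22}). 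Only for the third equality in \eqref{int12} do you fall back on $\wt S$ and \eqref{domS*}, and there, as well as in the selfadjointness of $T_a$ (symmetry, non-negativity, $\ran(T_a-aI)=\sM_a$), your argument coincides with the paper's. What each approach buys: yours is leaner and unified across the three cases; the paper's matching technique does not use $\dom S_z\subseteq\dom S^*$, so it stays closer to the general sectorial framework of Proposition~\ref{prop:ext}, and it produces the explicit parametrization of the intersection (e.g.\ $f=\RE z\,f_S-\bz P_{\sN_z}f_S$) that the paper reuses in the core/density arguments of the theorem that follows Theorem~\ref{intrsc}. Two small points worth keeping from your write-up: the justification that the sum in \eqref{int22} is orthogonal (via $F\subseteq\sM_{-\I x}$) is a detail the paper leaves implicit, and your explicit preimage $p/(2\I x)$ in the identification $F=(S+\I xI)\dom S^2$ matches the paper's computation exactly.
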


\begin{proof}
Throughout this proof let $\wt S$ be some non-negative selfadjoint extension of \vspace{1mm}$S$.

i) Let $\RE z<0$ and $h\in\dom S_z\cap\dom S^*_z$. Then, due to \eqref{eq:Szdef1} and \eqref{sopryazh-symm}, we have
\[
   h=g_S+\f_z=2\,\RE z \,f_{\wt S}-P_{\sN_z}(\wt S+\bar zI)f_{\wt S}
\]
with $g_S\in\dom S$, $f_z\in\sN_z$ and $f_{\wt S}\in\dom \wt S$ and hence
\[
   \dom \wt S \ \ni \ 2\,\RE z \,f_{\wt S}-g_S=P_{\sN_z}(\wt S+\bar zI)f_{\wt S}+\f_z\ \in \ \sN_z.
\]
By \eqref{domS*} we have $\dom \wt S\cap\sN_z=\{0\}$ and so we obtain
\[
   f_{\wt S}=\cfrac{1}{2\,\RE z} \,g_S \in\dom S ,\quad  (\wt S+\bar z I)f_{\wt S}=\cfrac{1}{2\,\RE z} (S+\bar z I)g_S,
\]
and, using that $P_{\sN_z} (S-\bz I)=0$ by \eqref{orth},
\begin{align*}
   h & = g_S-\cfrac{1}{2\,\RE z} P_{\sN_z}(S+\bar z I)g_S
       = g_S-\cfrac{1}{2\,\RE z} P_{\sN_z}(S-\bar z I) g_S-\cfrac{\bar z}{\RE z} P_{\sN_z}g_S\\
     & =\cfrac{1}{\RE z}\big(\RE z \, P_{\sM_{\bar z}}g_S\!+\!\RE z\, P_{\sN_z}g_S\!-\!(\RE z\!-\!\I \,\IM z)P_{\sN_z}g_S\big)
     =\big(\RE z\,P_{\sM_{\bar z}}\!+\!\I\,\IM z\,P_{\sN_z}\big)\cfrac{1}{\RE z}g_S.
\end{align*}
Thus \eqref{int11} holds.

ii) Let $a<0$. The first equality in \eqref{int12}
is immediate from the identity \eqref{int11}.

The in\-clusions $P_{\sM_{a}}\dom S \subset\dom S_a\cap\sM_a$ and $P_{\sM_{a}}\dom S \subset\dom S_a^*\cap\sM_a$ are obvious
{from the first identity in \eqref{int12}}.

Vice versa, let first $g=f_S+\f_a\in\dom S_a\cap \sM_a$ with $f_S\in\dom S$, $\f_a\in\sN_a$. Then
$0=P_{\sN_a}g=P_{\sN_a}f_S -\f_a$ and hence $g=P_{\sM_a}f_S\in P_{\sM_a}\dom S$. Thus the second equality in~\eqref{int12} is proved.

Secondly, let $g\in\dom S^*_a\cap \sM_a$. Then, by \eqref{sopryazh-symm},
\begin{align*}
   g=2 af_{\wt S}-P_{\sN_a}(\wt S+aI)f_{\wt S}
   =2aP_{\sM_a}f_{\wt S}-P_{\sN_a}(\wt S-a I)f_{\wt S}
\end{align*}
with $f_{\wt S} \in\dom \wt S$ and $0=P_{\sN_a}g=P_{\sN_a}(\wt S\!-\!a I)f_{\wt S}$.
The latter implies that $(\wt S\!-\!a I)f_{\wt S} \in \sM_a$, $(\wt S\!-\!a)f_{\wt S} = (S\!-\!a) f$ with $f\!\in\! \dom S$.
Since $S\!\subset\! \wt S$ and $\wt S \!-\!a$ is bijective, it follows that
$f_{\wt S} =f \in\dom S$. Hence $g = P_{\sM_a} (2a f ) \in P_{\sM_a}\dom S$.
This completes the proof of the third equality in \eqref{int12}.

To show that the operator $T_a:=P_{\sM_a}S_a\uphar(\dom S_a\cap\sM_a)$ is selfadjoint we note that,
by \eqref{int11},  for $h\in \dom S_a\cap\sM_a = P_{\sM_{a}}\dom S$,
$h=P_{\sM_a}f_S$ with $f_S\in \dom S$,
\[
   T_a h=P_{\sM_a}S_aP_{\sM_a}f_S=P_{\sM_a}S_a(f_S-P_{\sN_a}f_S)=P_{\sM_a}Sf_S
\]
and hence, by the definition of $\sN_a \subset \dom S^*$ in \eqref{orth},
\begin{align*}
   (T_ah,h)&=(P_{\sM_a}Sf_S, P_{\sM_a}f_S)=(Sf_S, P_{\sM_a}f_S)=(Sf_S,f_S- P_{\sN_a}f_S)\\
           &=(Sf_S,f_S)-(f_S, S^*P_{\sN_a}f_S)=(Sf_S,f_S)-(f_S, aP_{\sN_a}f_S)\\
					 &=(Sf_S,f_S)-a||P_{\sN_a}f_S||^2\ge 0.
\end{align*}
Therefore $T_a$ is symmetric and non-negative. Further,
\[
   (T_a-aI)h=P_{\sM_a}Sf_S-aP_{\sM_a}f_S=P_{\sM_a}(S-a I)f_S=(S-a I)f_S
\]
shows $\ran (T_a-aI)=\sM_a$ which yields that $T_a$ is selfadjoint in $\sM_ a$.
In order to prove that $T_a=P_{\sM_a}S^*_a\uphar(\dom S^*_a\cap\sM_a)=:Q_a$,
we note that $S$ is symmetric and, by \eqref{int11},  for $h\in \dom S_a^*\cap\sM_a = P_{\sM_{a}}\dom S$,
$h=P_{\sM_a}f_S$ with $f_S\in \dom S \subset \dom S^*$,
\[
   Q_a P_{\sM_a}f_S=P_{\sM_a}S^*_a(f_S-P_{\sN_a}f_S)=P_{\sM_a}S^*(f_S-P_{\sN_a}f_S)=P_{\sM_a}Sf_S=T_aP_{\sM_a}f_S.
\]

iii) The last equality $\sM_{-\I x}\cap\dom S=(S+\I x I)\dom S^2$ in \eqref{int22} is clear. To prove the first,
suppose that $h\in\dom S_{\I x}\cap\dom S^*_{\I x}$. Then, by \eqref{eq:Szdef1},
\eqref{sopryazh2-symm} and since $S\subset \wt S$, we~have
\begin{equation}
\label{g_S}
   h=g_S+\f_{\I x}= (\wt S-\I x I)^{-1}(S+\I x I)f_S+\psi_{\I x} = f_S+2\I x(\wt S-\I x I)^{-1}f_S+\psi_{\I x}
\end{equation}
with $g_S$, $f_S\in\dom S$ and $\f_{\I x}$, $\psi_{\I x}\in\sN_{\I x}$. Hence
\[
\dom \wt S \ \ni \ 2\I x(\wt S-\I x I)^{-1}f_S-g_S+f_S=\f_{\I x}-\psi_{\I x} \ \in \ \sN_{\I x}.
\]
Since $\dom \wt S\cap\sN_{\I x}=\{0\}$ by \eqref{domS*}, we obtain
$\f_{\I x}-\psi_{\I x}=0$ and
\begin{equation}
(S-\I xI)(g_S-f_S) ={(\wt S-\I xI)(g_S-f_S)}= 2\I x f_S \in \dom S.
\end{equation}
The latter implies that $g_s\!-\!f_s \in \dom S^2$.  By \eqref{g_S} we have
$(S \!-\! \I x I) g_S = (S\!+\! \I x I) f_S$ and~so
\[
   g_S = \frac 1{2 \I x} (S+\I xI) (f_S-g_S) \in (S+\I x I) \dom S^2,
\]
which proves the inclusion ``$\subset$'' in the first equality in \eqref{int22}.
Conversely, for $h_S\!\in\! \dom S^2$~set
\begin{align*}
  g_S := \frac 1{2\I x} (S+\I x I) h_S \in \dom S \subset \dom S_{\I x}, \quad
  f_S := \frac 1{2\I x} (S-\I x I) h_S \in \dom S.
\end{align*}
Then  $g_S = f_S+2\I x(\wt S-\I x I)^{-1}f_S \in \dom S_{\I x}^*$ which proves $(S+\I x I) \dom S^2 \!\subset  \dom S_{\I x} \cap  \dom S_{\I x}^*$.
Clearly, $\sN_{\I x}\subset\dom S_{\I x}\cap\dom S^*_{\I x}$ by definition \eqref{orth} and \eqref{sopryazh2}.
Hence the proof of the inclusion ``$\supset$'' in the first equality in \eqref{int22} is complete.
\end{proof}

\begin{remark}
\label{rem:stenger}
W.~Stenger \cite{MR0220075} proved that for an unbounded selfadjoint operator $A$ in a Hilbert space $\sH$ and a finite codimensional $($hence closed$)$ subspace $\cF \subset \sH$ the so-called \emph{compression} $P_{\cF}A\uphar(\dom A\cap\cF)$ is selfadjoint in $\cF$. This result {was}
extended by M.A.~Nudelman \cite{MR2806469} to $m$-dissipative operators, i.e.\ if $A$ is $m$-dissipative in $\sH$, then
$P_{\cF}A\uphar(\dom A\cap\cF)$ is $m$-dissipative in~$\cF$.

Theorem {\rm \ref{intrsc}}, when applied with a non-negative symmetric $S$ having infinite deficiency indices, provides
examples of $m$-accretive operators $S_a$, $a<0$, for which even the compressions $P_{\sM_a}S_a\uphar(\dom S_a\cap\sM_a)$
to the infinite codimensional subspaces $\sM_a$ are $m$-accretive, and even non-negative selfadjoint, in $\sM_a$.
\end{remark}

\begin{theorem}
Let $S$ be a closed densely defined non-negative symmetric, but not selfadjoint operator in $\sH$
with associated defect subspace $\sM_{\bar z}$ and $\sN_z \subset \sH$  as in \eqref{orth},
and let $S_z$ for $z\in \dC \setminus \{0\}$, $\RE z \le 0$, be the $m$-accretive extension of $S$ in~\eqref{eq:Szdef1}.
\begin{enumerate}
\item[{\rm i)}]
If $\,\RE z\!<\!0$ and $\IM z\!\ne\! 0$, then
\[
   \dom S_z\cap\dom S^*_z \ \mbox{is dense in} \ \sH, \ \mbox{but neither a core of $\,S_z$ nor of $\,S^*_z$}.
\]
\item[{\rm ii)}] If $a<0$, then $\left(\dom S_a\cap\dom S^*_a\right)^\perp =\sN_a$ and hence
\[
  \codim\left(\dom S_a\cap\dom S^*_a\right)=\dim\sN_a\le\infty.
\]
\item[{\rm iii)}] If $x\in\dR\setminus\{0\}$, \vspace{1.75mm} then
\begin{enumerate}
\item[{\rm a)}]{if} $\,\dom S_{\I x}\cap\dom S^*_{\I x}$ is dense in $\sH$ and thus, in particular, if {$\,\dim \sN_{\I x}\!<\!\infty$},~then
\[\mbox{
$\dom S_{\I x}\cap\dom S^*_{\I x}$ is neither a core of $\,S_{\I x}$ nor of \vspace{1mm} $\,S^*_{\I x}$,
}\]
\item[{b)}] if $\dom S^2=\{0\}$, then $\dim \sN_{\I x}=\infty$ and
therefore
\[
   \dim \big(\dom S_{\I x}\cap\dom S^*_{\I x} \big)=\infty, \quad
   \codim \big(\dom S_{\I x}\cap\dom S^*_{\I x}\big)=\infty.
\hspace{-8mm}
\]
\end{enumerate}
\end{enumerate}
\end{theorem}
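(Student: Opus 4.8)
The plan is to reduce every assertion to the explicit descriptions of $\dom S_z\cap\dom S^*_z$ furnished by Theorem~\ref{intrsc} and the domain formula \eqref{sopryazh1}, using throughout the following standard characterisation of cores: if $T$ is closed and densely defined and $D\subseteq\dom T$ is dense in $\sH$, then $D$ is a core of $T$ if and only if $(T\uphar D)^*=T^*$. Since $T\uphar D\subseteq T$ forces $(T\uphar D)^*\supseteq T^*$, the subspace $D$ fails to be a core precisely when $\dom(T\uphar D)^*\supsetneq\dom T^*$, i.e.\ when there is a witness $w\in\dom(T\uphar D)^*$ with $w\notin\dom T^*$. So for each non-core claim I will exhibit such a witness, applying this once with $T=S_z$ (where $T^*=S^*_z$) and once with $T=S^*_z$ (where $T^*=S_z$ as $S_z$ is closed). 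Note that $\sN_z\ne\{0\}$ and $\sN_{\overline z}\ne\{0\}$ throughout, because $S$ is symmetric but not selfadjoint.

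For part~i) I would set $B:=\RE z\,P_{\sM_{\overline z}}+\I\,\IM z\,P_{\sN_z}$, so that $\dom S_z\cap\dom S^*_z=B\,\dom S$ by \eqref{int11}. As $\RE z\ne0$ and $\IM z\ne0$, the operator $B$ is boundedly invertible and hence maps the dense set $\dom S$ onto a dense set, giving density. To show $D:=\dom S_z\cap\dom S^*_z$ is not a core of $S_z$, I pick $g\in\dom S$ with $P_{\sN_z}g\ne0$; then \eqref{sopryazh1} gives $P_{\sN_z}(S^*+\overline z I)g=2\overline z\,P_{\sN_z}g\ne0$, so $g\notin\dom S^*_z$, while the identity $S_z(Bh)=\RE z\,Sh+|z|^2P_{\sN_z}h$ together with $g\in\dom S^*$ and $\|h\|\le\|B^{-1}\|\,\|Bh\|$ shows that $u\mapsto(S_zu,g)$ is $\|\cdot\|$-bounded on $D$, i.e.\ $g\in\dom(S_z\uphar D)^*$. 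Symmetrically, for $S^*_z$ I pick $\chi\in\sN_{\overline z}\setminus\{0\}$; writing a hypothetical decomposition of $\chi$ in $\dom S\dotplus\sN_z$ and using $\sM_{\overline z}\perp\sN_z$ and $\IM z\ne0$ one gets $\chi\notin\dom S_z$, whereas the companion identity $S^*_z(Bh)=\RE z\,Sh-|z|^2P_{\sN_z}h$ shows $\chi\in\dom(S^*_z\uphar D)^*$.

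Part~ii) is immediate: by \eqref{int12} we have $D=P_{\sM_a}\dom S\subseteq\sM_a$, and since $\dom S$ is dense and $P_{\sM_a}$ is an orthogonal projection, $P_{\sM_a}\dom S$ is dense in $\sM_a$; hence $D^\perp=\sM_a^\perp=\sN_a$ and $\codim D=\dim\sN_a$, positive because $S$ is not selfadjoint. For part~iii)a) I would first observe that the decomposition \eqref{int22}, $D=(\sM_{-\I x}\cap\dom S)\oplus\sN_{\I x}$, is orthogonal, so $D$ is dense iff $\sM_{-\I x}\cap\dom S$ is dense in $\sM_{-\I x}$; when $\dim\sN_{\I x}<\infty$ the subspace $\sM_{-\I x}$ has finite codimension, and a dense domain always meets a finite-codimensional closed subspace densely, which yields density in that case. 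Granting $D$ dense, the non-core claims run exactly as in~i), now simplified by the orthogonality of \eqref{int22} and by $S^*_{\I x}\uphar D=S^*\uphar D$: for $S_{\I x}$ take $g\in\dom S$ with $P_{\sN_{\I x}}(S-\I xI)g\ne0$ (possible since $\ran(S-\I xI)=\sN_{-\I x}^\perp$ and $\sN_{\I x}\not\subseteq\sN_{-\I x}$, as $\sN_{\I x}\cap\sN_{-\I x}=\{0\}$), so $g\notin\dom S^*_{\I x}$ but $g\in\dom(S_{\I x}\uphar D)^*$; for $S^*_{\I x}$ take $\chi\in\sN_{-\I x}\setminus\{0\}$, which lies outside $\dom S_{\I x}$ by the $\IM z\ne0$ argument of~i) yet inside $\dom(S^*_{\I x}\uphar D)^*$.

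Finally, for part~iii)b) I would prove the contrapositive of $\dom S^2=\{0\}\Rightarrow\dim\sN_{\I x}=\infty$: if $\dim\sN_{\I x}<\infty$, then $S$ has finite equal deficiency indices, hence a selfadjoint extension $\wt S$ with $\dim(\dom\wt S/\dom S)<\infty$; as $\dom\wt S^2$ is dense and so infinite dimensional, and the two linear conditions $f\in\dom S$, $\wt Sf\in\dom S$ cut out of $\dom\wt S^2$ a subspace of finite codimension, there is a nonzero $f$ with $f\in\dom S$ and $Sf=\wt Sf\in\dom S$, i.e.\ $\dom S^2\ne\{0\}$. Given $\dim\sN_{\I x}=\infty$, \eqref{int23} yields $D=\sN_{\I x}$, whence $\dim D=\infty$, while $\codim D=\dim\sM_{-\I x}=\infty$ since $S+\I xI$ maps the infinite dimensional $\dom S$ bijectively and boundedly below onto $\sM_{-\I x}$. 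I expect the genuine obstacle to be the non-core arguments in~i) and~iii)a): the subtle step is not finding a vector outside $\dom S^*_z$ (resp.\ $\dom S_z$), but verifying that it nevertheless lies in $\dom(S_z\uphar D)^*$ (resp.\ $\dom(S^*_z\uphar D)^*$), which forces one to use the exact action of $S_z$ and $S^*_z$ on $D$ from Theorem~\ref{intrsc} and the boundedness of $B^{-1}$, rather than any soft density argument.
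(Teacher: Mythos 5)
Your proposal is correct: every witness you name exists and does what you claim, and your treatment of ii), of the density statements, and of the reduction to the formulas \eqref{int11}, \eqref{int22}, \eqref{sopryazh1} agrees with the paper. Where you genuinely diverge is in the two delicate places, and the comparison is instructive. For the non-core claims in i) and iii)\,a) the paper never passes to adjoints: using the same parametrization of $D=\dom S_z\cap\dom S_z^*$, it shows by a Cauchy-sequence argument (in i)) and by the identity $S_{\I x}(S+\I xI)f_S=S(S+\I xI)f_S=S^*_{\I x}(S+\I xI)f_S$ (in iii)\,a)) that the restrictions $S_z\uphar D$ and $S_z^*\uphar D$ are themselves \emph{closed} operators, and then concludes non-coreness from the separately proved properness $D\subsetneq\dom S_z$, $D\subsetneq\dom S_z^*$. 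Your route is the dual one: you prove $(S_z\uphar D)^*\supsetneq S_z^*$ and $(S_z^*\uphar D)^*\supsetneq S_z$ by exhibiting concrete witnesses --- $g\in\dom S$ with nonvanishing defect projection, respectively $\chi$ in the defect space $\sN_{\bar z}$ or $\sN_{-\I x}$ --- which pair boundedly against the restricted operator but lie outside $\dom S_z^*$, respectively $\dom S_z$. The two are equivalent by taking adjoints, but they buy different things: the paper's version yields the extra structural fact that the restrictions to $D$ are closed (in the spirit of the compression results of Remark \ref{rem:stenger}), while yours is sequence-free, symmetric between $S_z$ and $S_z^*$, and makes explicit which vectors obstruct the core property. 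In iii)\,b) the paper argues directly from $\dim\sN_{\I x}<\infty$: the finite-codimension density lemma of \cite{MR0113146} produces a nonzero $g\in\dom S\cap\ran(S+\I xI)$, hence a nonzero element of $\dom S^2$; your contrapositive via a selfadjoint extension $\wt S$ with $\dim(\dom\wt S/\dom S)<\infty$ and a codimension count inside the dense subspace $\dom\wt S^2$ reaches the same contradiction, trading that lemma for von Neumann's density theorem for $\dom\wt S^2$. Both are sound; only note that your existence claims for the witnesses (e.g.\ $g\in\dom S$ with $P_{\sN_z}g\ne 0$) deserve the one-line justification that $\dom S$ is dense while $\sN_z\ne\{0\}$, which you indeed set up at the start.
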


\begin{proof}
i)
Assume $\RE z<0$ and $\IM z\ne 0$.
First we show that $\dom S_z \cap \dom S_z^*$ is 1) neither equal to $\dom S_z$, 2) nor to $\dom S_z^*$. For 1), let $h\in \sN_z \setminus \{0\}$.
Then $h \in \dom S_z\setminus \{0\} $, but $h \notin \dom S_z^*$. Otherwise, by \eqref{int11}, $h = (\RE z \,P_{\sM_{\bz}} + \I \,\IM z \, P_{\sN_z} ) f$
for some $f\in \dom S$. This implies $P_{\sM_{\bz}} f=0$, $\I \,\IM P_{\sN_{\bz}}f = h$ and hence $h= \I \,\IM z\, f \in \dom S \cap \sN_z =\{0\}$, a contra\-diction. For 2), since $S$ is not selfadjoint, we can choose $h \in (2 \,\RE z \,I - P_{\sN_{\bz}} (\wt S + \bz I))f$ with $f \in \dom \wt S \setminus \dom S$. Then, by \eqref{int12}, $ h \in \dom S_z^* \setminus \{0\}$, but $h \notin \dom S_z$. Otherwise, by~\eqref{eq:Szdef1}, $h = f_S +\f_z$ with $f_S\in \dom S$,
$\f_z\in \sN_z$. This implies
\[
 \dom \wt S \ \ni \ 2 \, \RE z \, f - f_S = \f_z + P_{\sN_{\bz}} (\wt S + \bz I) f \ \in \sN_z,
\]
and hence, since $\dom \wt S \cap \sN_z = \{0\}$ by \eqref{domS*},  $ f = \frac 1{2 \,\RE z} f_S \in \dom S$, a contradiction.

Secondly, we prove that $\dom S_z \cap \dom S_z^*$ is dense in $\sH$. Let $h\in \sH$ be arbitrary. Since $\dom S$ is dense in $\sH$, there exists
a sequence $\{f_n\}_{n\in\dN} \subset \dom S$ such that $f_n \to \frac 1{\RE z} P_{\sM_{\bz}}h + \frac 1 {\I \,\IM z} P_{\sN_z}h$, $n\to\infty$. Then
$g_n:=(\RE z \,P_{\sM_{\bz}} + \I \,\IM z \,P_{\sN_z} ) f_n \in \dom S_z \cap \dom S_z^*$, $n\in\dN$, by \eqref{int11} and $g_n \to P_{\sM_{\bz}} h +
P_{\sN_z}  h = h$, $n\to\infty$.

To prove that $\dom S_z \cap \dom S_z^*$ is neither a core of $S_z$ nor of $S^*_z$,
let $\|\cdot\|_{S_z}$ and $\|\cdot\|_{S^*_z}$ be the graph norms on $\dom S_z$ and $\dom S^*_z$, respectively. Assume $\{f_n\}_{n\in\dN}\subset\dom S_z\cap\dom S^*_z$ is a Cauchy sequence with respect to $\|\cdot\|_{S_z}$, i.e.\ $\{f_n\}_{n\in\dN}$ and $\{S_zf_n\}_{n\in\dN}$ are Cauchy sequences
in $\sH$. Then, by \eqref{int11} and \eqref{eq:Szdef1}, there exist $f^{(n)}_S \in \dom S$, $n\in\dN$, such that
\begin{align*}
f_n&=\big(\RE z\,P_{\sM_{\bar z}}\!+\!\I\,\IM z \,P_{\sN_z}\big)f^{(n)}_S=\RE z \,f^{(n)}_S\!-\!\bz P_{\sN_z}f^{(n)}_S\!,\quad
S_zf_n=\RE z \,Sf_S^{(n)}\!+\!|z|^2 P_{\sN_z}f^{(n)}_S\!.
\end{align*}
It follows that $\{P_{\sM_{\bar z}}f_S^{(n)}\}_{n\in\dN}$, $\{P_{\sN_z}f_S^{(n)}\}_{n\in\dN}$, and hence also $\{f_S^{(n)}\}_{n\in\dN}$ and $\{Sf_{S}^{(n)}\}_{n\in\dN}$ are Cauchy sequences in $\sH$. Since $S$ and $S_z$ are closed, we conclude that the limit of $\{f_S^{(n)}\}_{n\in\dN}$ lies in $\dom S$ and the limit of $\{f_n\}$ 
lies in $\dom S_z\cap\dom S^*_z$. This proves that $S_z \uphar (\dom S_z\cap\dom S_z^*)$ is closed.
Because $S^*_z$ is a closed restriction of $S^*$, similar arguments show that also $S_z^* \uphar (\dom S_z\cap\dom S_z^*)$ is closed.

Altogether, it follows that $\dom S_z\cap\dom S^*_z$ is neither a core of $S_z$ nor of $S^*_z$.

ii) The claim is immediate from the first identity in \eqref{int12} since $\dom S$ is dense.

iii)  By \eqref{int22}, we have
\[
   \left(\dom S_{\I x}\cap\dom S^*_{\I x}\right)^\perp=\sM_{-\I x}\ominus\left((S+\I x I)\dom S^2\right)=\sM_{-\I x}\ominus\left(\sM_{-\I x}\cap\dom S\right).
\]
a)
Suppose $\dom S_{\I x}\cap\dom S^*_{\I x}$ is dense in $\sH$. Due to \eqref{int22} this is equivalent to $(S+\I xI)\dom S^2$
being dense in $\sM_{-\I x}$. Using \eqref{eq:Szdef1}, the inclusion $(S+\I xI)\dom S^2\!\subset\!\dom S$ and the symmetry of~$S$, we obtain that
\begin{align*}
  S_{\I x}(S+\I x I)f_S&
	=   S(S+\I x I)f_S =
  S^*_{\I x}((S+\I xI)f_S),
	\quad f_S\in\dom S^2.
\end{align*}
Hence \eqref{int22} yields that the operators $S_{\I x}\uphar(\dom S_{\I x}\cap\dom S^*_{\I x})$ and $S^*_{\I x}\uphar(\dom S_{\I x}\cap\dom S^*_{\I x})$ are closed. Since $S$ is closed, non-negative and unbounded,  $\dom S^2 \subsetneq \dom S$ by \cite[Thm.\ 2.1]{MR760619} and hence $(S+\I xI)\dom S^2 \subsetneq (S+\I xI)\dom S = \sM_{-\I x}$. Together with the closedness of $S_{\I x}$ and $S_{\I x}^*$, it follows that $\dom S_{\I x}\cap\dom S^*_{\I x}$ is neither a core of $S_{\I x}$ nor of $S_{\I x}^*$.

b)
Suppose that $\dom S^2 = \{0\}$. We only have to prove that $\dim \sN_{\I x}=\infty$.
If  $\sN_{\I x}$ were finite dimensional, then $\dom S \cap \ran (S+\I x I) = \dom S \cap \sN_{\I x}^\perp$ is dense in
$\sH$ by \cite[Lemma 2.1]{MR0113146}. In particular, there exists $g\in (\dom S \cap \ran (S+\I x I)) \setminus \{0\}$
and hence $g=(S+\I x I)f$ with some $f \in \dom S \setminus \{0\}$. The latter implies that $Sf=g-\I x f \in \dom S$ and so $f\in \dom S^2 = \{0\}$,
a contradiction.
\end{proof}

\begin{remark}
\label{dom0}
The first example of a closed densely defined symmetric operator $S$ whose square has trivial domain, {$\dom S^2=\{0\}$}, was constructed in \cite{MR0003468, MR0003469}. In \cite{MR712639} an example of a closed densely defined semi-bounded symmetric operator {$S$} with $\dom S^2=\{0\}$ was given.

Further, in \cite[Thm.~5.2]{MR695940} it was shown that, for every unbounded selfadjoint operator $A$ in $\sH$, there exist
closed densely defined $($symmetric$)$ restrictions $A_1$ and $A_2$ of $A$ such that
\[
\dom A_1\cap\dom A_2=\{0\} \quad \mbox{and} \quad \dom A^2_1=\dom A^2_2=\{0\}.
\]
\end{remark}

\begin{remark}
\label{B_x}
The construction in this section yields another class of $m$-sectorial or $m$-accretive operators $T$ with $\dom T \cap \dom T^* = \{0\}$,
different from the one in Section \ref{sec:max-sect-dim=n}. It relies on the existence of a closed densely defined non-negative symmetric operator $S$ with $\dom S^2 = \{0\}$ rather~than on von Neumann's theorem, Theorem \ref{vnthm}, as in the construction in Section~{\rm\ref{sec:max-sect-dim=n}}.
In fact, by \eqref{eq:Szdef1}
\begin{align*}
    &\dom S_{\I x}\cap \sM_{-\I x}=P_{\sM_{-\I x}}\dom S,\\
    & S_{\I x}(P_{\sM_{-\I x}}f_S)\!=\!S_{\I x}(f_S\!-\!P_{\sN_{\I x}}f_S)\!=\!Sf_S\!+\!\I xP_{\sN_{\I x}}f_S\!
      =\!(S\!+\!\I xI)f_S\!-\!\I x P_{\sM_{-\I x}} f_S    {\in\!\sM_{-\I x}} 
\end{align*}
for $f_S\!\in\!\dom S$ and $\sN_{\I x}$ is an eigenspace of the operator $S_{\I x}$ corresponding to the eigenvalue $-\I x$.
 This shows that both $\sM_{-\I x}$ and $\sM_{-\I x}^\perp = \sN_{\I x}$ are invariant for $S_{\I x}$, i.e.\
$\sM_{-\I x}$ is a reducing subspace for $S_{\I x}$ and hence also for $S^*_{\I x}$ \cite[Sect.~2.5]{MR1887367}.
Further, $S_{\I x}\uphar(\dom S_{\I x}\cap\sM_{-\I x})$ and $S^*_{\I x}\uphar(\dom S^*_{\I x}\cap\sM_{-\I x})$ are $m$-accretive in the
Hilbert space $\sM_{-\I x}$.
Moreover, since $\sN_{\I x} = \ker(S^*_{\I x}\!-\!\I x I) \!\subset\!\dom S^*_{\I x}$, we have
\[
   \dom S^*_{\I x}\cap\sM_{-\I x}=P_{\sM_{-\I x}}\dom S^*_{\I x}.
\]
Therefore, the operators $S_{\I x}$ and $S_{\I x}^*$ decompose as
$S_{\I x} = {\rm diag}\,(B_{\I x}, -\I x)$, $S_{\I x}^* = {\rm diag}\,(B_{\I x}^*, \I x)$ in $\sH = \sM_{-\I x} \oplus \sN_{\I x}$
with the $m$-accretive operators
\begin{equation}\label{opbx}
 B_{   {\I}x}:=S_{\I x}\uphar(\dom S_{\I x}\cap\sM_{-\I x}), \quad
    {B_{\I x}^*=S_{\I x}^*\uphar(\dom S_{\I x}^*\cap\sM_{-\I x})}.
\end{equation}
Thus, by Proposition {\rm \ref{prop:ext} iii)} and \eqref{int23}, if $\dom S^2\!=\!\{0\}$, then $T\!=\!B_{\I x}$ is
$m$-accretive in $\sM_{-\I x}$ and $T\!=\!B_{\I x} \!+\! \I x$ is $m$-sectorial in $\sM_{-\I x}$, respectively, with $\dom T \cap \dom T^* \!=\! \{0\}$.
\end{remark}

In the following, for arbitrary $n\in \dN$, we construct a closed densely defined positive definite symmetric operator $S$
with deficiency index $(n,n)$ such that its inverse $S^{-1}$,  defined on $\dom S^{-1}=\ran S$, is compact.

\begin{remark}
\label{dgjkyt}
Let $G$ be a non-negative compact operator in a Hilbert space $\sH$ with $\ker G\!=\!\{0\}$
and define
\[
    A:=G^{-1}, \quad \sH_+:=\dom A=\ran G,\quad (f,g)_{\sH_+}:=(Af,g)=(G^{-1}f, g).
\]
Then $\sH_+$ is a Hilbert space with respect to the scalar product $(\cdot,\cdot)_{\sH_+}$ induced by the norm $\|A^{1/2}\cdot\|$, and
$A$ is a positive definite unbounded selfadjoint operator in $\sH_+$.
Let
\[
   \sH_+\subset \sH\subset \sH_-
\]
be the corresponding rigged Hilbert space, see e.g.~\cite{MR0222718}, \cite{MR3496031},  i.e.\ $\sH_-$ is the closure of $\sH$ with respect to
the norm $\|A^{-1/2}\cdot\|$.

Given $n\in \dN$, we now choose an arbitrary $n$-dimensional subspace $\sL$ in $\sH_-$ such that $\sL\cap \sH=\{0\}$;
note that this is possible since $A$ is unbounded and hence $\sH \subsetneq \sH_-$ is only dense and not closed in $\sH_-$.
If we define the operator $S$ in $\sH_+$ by
\[
    \dom S:=\left\{f\in\sH_+: (f,\varphi)=0,\ \varphi\in\sL\right\}  \subset \dom A,  \quad S:=A\uphar \dom S,
\]
where $(f,\varphi) \!=\! (A^{1/2}f, A^{-1/2}\varphi)$ denotes the canonical pairing,
then $S$ is densely defined posi\-tive definite and symmetric with deficiency indices $(n,n)$, see~\cite{MR3496031}, and $A$ is a
positive definite selfadjoint extension of $S$ in $\cH_+$ with compact resolvent. Hence,  for $\lambda\!\in\!\dC\setminus\dR_+$, the
inverse $(S\!-\!\lambda I)^{-1}\!=\!(A\!-\!\lambda I)^{-1} \uphar \ran (S\!-\!\lambda I)$ on $\dom (S\!-\!\lambda I)^{-1} \!=\! \ran (S\!-\!\lambda I)$ is compact
in~$\sH_+$.
\end{remark}

\begin{remark}
Given $n\in \dN \cup \{\infty\}$, we can choose a  closed densely defined non-negative symmetric operator $S$ in a Hilbert space $\sH$ with
deficiency index $(n,n)$, i.e.\
$\dim \sN_z\!=\!n\le\infty$, $z\in\dC\setminus\dR_+$. Then,
by Theorem \ref{intrsc} ii), the extensions $S_a$, $a<0$, are a family of
$m$-accretive operators such that
\[
   \codim \left(\dom S_a\cap\dom S^*_a\right)=n \in \dN \cup \{\infty\}.
\]

For finite $n\in \dN$, we can even choose a positive definite $S$ with
compact inverse $S^{-1}$ defined on $\dom S^{-1}=\ran S$, see Remark \ref{dgjkyt}.
Due to \eqref{orth}, \eqref{eq:Szdef1}, we have
\[
   (S_z-\bar z I)^{-1}=(S-\bar z I)^{-1}P_{\sM_{\bar z}}-\cfrac{1}{2\,\RE z}P_{\sN_z}, \quad z \in \dC, \ \RE z < 0.
\]
Together with the Hilbert identity for the resolvent, the inverse $(S_z-\lambda I)^{-1}$, which is defined on
$   {\dom (S-\lambda I)^{-1}}\!=\! \ran (S-\lambda I)$,    {$\lambda \!\in\! \dC$, $\RE \lambda \!<\!0$,} is compact.
Hence, in this case, by Proposition~\ref{prop:ext} and Theorem~\ref{intrsc}~ii), the extensions $S_a$, $a<0$, are a family
of $m$-sectorial operators with compact resolvent such that $\codim \left(\dom S_a\cap\dom S^*_a\right)=n \in \dN$.

Note that, if the deficiency numbers of $S$ are infinite, i.e.\ $n=\infty$, then the resolvent of the operator $S_z$ cannot be compact
since the defect subspace $\sN_z$ is an infinite dimensional eigenspace of $S_z$ for the eigenvalue $-z$, see \eqref{eq:Szdef1}.
\end{remark}

\section{
Singular perturbations $T$ of a selfadjoint operator with $\codim\!\big(\dom T\cap \dom T^*\big)\!\in\!\dN$}
\label{section:singperturb}

In this section, we construct unbounded non-selfadjoint densely defined operators $T$ such that
$\codim(\dom T\cap\dom T^*)=n$ for arbitrary given $n\in \dN$ with non-emtpy resolvent set and full numerical range,
$\rho(T)\ne \emptyset$ and $W(T)=\dC$.

The  operators $T$ will be obtained as singular perturbations of an unbounded selfadjoint operator $A$  in a separable
infinite dimensional complex Hilbert space $\sH$ with inner product $(\cdot,\cdot)$. To this end,
we consider the Hilbert space $\sH_+$ induced by the graph norm $\|\cdot\|_+$ of $A$, i.e.\ $\sH_+:=\dom A$ with inner product $(\cdot,\cdot)_+$ given by
\[
   (f,g)_+:=(f,g)+(Af,Ag), \quad f,g\in \sH_+.
\]
Let
\[
   \sH_+\subset \sH\subset \sH_-
\]
be the corresponding rigged Hilbert space, see e.g.~\cite{MR0222718}, \cite{MR3496031},
where $\sH_-$ is the closure of $H$ with respect to the norm
\[
   \|h\|_-:=\sup\limits_{\varphi\in \sH_+,\|\varphi\|_{   {+}}=1}|(\varphi, h)|,
\]
i.e., $\sH_-$ is the Hilbert space of all linear functionals on $\sH_+$ bounded with respect to $\|\cdot\|_+$ 
each of which is determined by a vector $h\in \sH_-$ via
\begin{equation}
\label{eq:pairing}
   l_h(\varphi)=(\varphi, h),\quad \varphi\in \sH_+.
\end{equation}

Clearly,  $ \|f\| \!\le\! \|f\|_+$ and $\|Af\| \!\le\! \|f\|_+$,  $f\in \sH_+$, so the operator $A$ is bounded when act\-ing from $\sH_+$ to $\sH$,
$A\!\in\!\bB(\sH_+,\sH )$. Let $A^\times\!\in\!\bB(\sH, \sH_-)$ be the corresponding adjoint operator,~i.e.,
\[
   (Af,g)=(f, A^\times g)_{{+,-}}, \quad f\in \sH_+,\ g\in \sH,
\]
where $(\cdot,\cdot)_{+,-}$ on the right hand side denotes the canonical pairing between $\sH_+$ and $\sH_-$, see~\eqref{eq:pairing}.
Since $A$ is selfadjoint in $\sH$, the adjoint operator $A^\times\in\bB(\sH, \sH_-)$ is the continuation of $A$ onto $\sH$, {and we therefore} denote $\bA:=A^\times$.

The equalities
\[
||(A+\I I_\sH )f||^2=||(A-\I I_\sH )f||^2=||Af||^2+||f||^2=||f||^2_+, \quad f\in \sH_+,
\]
imply that the operators $A\pm \I I_{\sH}$  map $\sH_+$ onto $\sH$ unitarily and thus their adjoints $\bA\mp \I I_{\sH}$ map $\sH$ onto $\sH_-$ unitarily.

\begin{theorem}
Let $A$ be an unbounded selfadjoint operator in a separable Hilbert space $\sH$ with corresponding rigged Hilbert space $\sH_+\subset \sH\subset \sH_-$.
Suppose $Z\in \bB(\sH,\sH_-)$ satisfies
\begin{equation}
\label{final}
  \|Z\|_{\sH\to \sH_-}<1, \quad \dim\ran Z=n\in\dN,\quad   \ran Z\cap \sH=\{0\}.
\end{equation}
Then the operator {$T$ in $\sH$} given by
\begin{equation}
\label{eq:singpert}
   T:=A+Z^\times, \quad \dom T:=\dom A,
\end{equation}
is closed, $\rho(T)\cap\dC_{{\pm}}\ne\emptyset$, 
its adjoint has the form
\begin{equation}
\label{eq:singpert*}
   T^*=\left(\bA+Z\right)\uphar\left(I_{\sH}+(\bA+ \I I_\sH )^{-1}Z\right)^{-1}\sH_+,
\end{equation}
and
\[
   \codim\left(\dom T\cap \dom T^*\right)=n.
\]
Moreover, if $A$ is chosen to be non-semibounded, ,
then $W(T)=\dC$; if $A$ is chosen to have compact resolvent, then $T$ has compact resolvent.
\end{theorem}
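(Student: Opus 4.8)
The plan is to treat $T=A+Z^\times$ as a relatively bounded perturbation of the selfadjoint operator $A$ and to exploit throughout that $A\pm\I I_\sH$ map $\sH_+$ onto $\sH$ unitarily while their continuations $\bA\mp\I I_\sH$ map $\sH$ onto $\sH_-$ unitarily. Since $Z^\times\in\bB(\sH_+,\sH)$ with $\|Z^\times\|_{\sH_+\to\sH}=\|Z\|_{\sH\to\sH_-}<1$, the perturbation $Z^\times$ is $A$-bounded with relative bound $<1$, so $T$ is closed. For the resolvent I would factor, on $\sH_+$, $T-\I I=(A-\I I)\bigl(I+(A-\I I)^{-1}Z^\times\bigr)$; as $(A-\I I)^{-1}\colon\sH\to\sH_+$ is unitary one has $\|(A-\I I)^{-1}Z^\times\|_{\sH_+\to\sH_+}\le\|Z\|<1$, so the second factor is invertible by a Neumann series and $\I\in\rho(T)$. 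The point $-\I$ is treated symmetrically, giving $\rho(T)\cap\dC_\pm\ne\emptyset$.

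For the adjoint I would use the duality relations $(Af,g)=(f,\bA g)_{+,-}$ and $(Z^\times f,g)=(f,Zg)_{+,-}$ for $f\in\sH_+$, $g\in\sH$, which give $(Tf,g)=(f,(\bA+Z)g)_{+,-}$. Hence $g\in\dom T^*$ exactly when this functional of $f$ is $\|\cdot\|_\sH$-bounded, i.e.\ when $(\bA+Z)g\in\sH$, and then $T^*g=(\bA+Z)g$. Rewriting $(\bA+Z)g=(\bA+\I I_\sH)\bigl(g+(\bA+\I I_\sH)^{-1}Zg\bigr)-\I g$ and using $(\bA+\I I_\sH)\sH_+=\sH$, the membership $(\bA+Z)g\in\sH$ is equivalent to $g+(\bA+\I I_\sH)^{-1}Zg\in\sH_+$, which is the claimed formula for $T^*$; here $\|(\bA+\I I_\sH)^{-1}Z\|_{\sH\to\sH}\le\|Z\|<1$, so $I_\sH+(\bA+\I I_\sH)^{-1}Z$ is a bounded bijection of $\sH$ and the expression is meaningful. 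Intersecting the domains, $g\in\dom T\cap\dom T^*$ iff $g\in\sH_+$ and $(\bA+\I I_\sH)^{-1}Zg\in\sH_+$; since $(\bA+\I I_\sH)^{-1}$ carries $\sH$ onto $\sH_+$, this holds iff $Zg\in\sH$, and by $\ran Z\cap\sH=\{0\}$ iff $Zg=0$. Thus $\dom T\cap\dom T^*=\dom A\cap\ker Z$. Writing $\ker Z=\sN^\perp$ with $\dim\sN=\dim\ran Z=n$ and invoking the density of $\dom A\cap\sN^\perp$ in $\sN^\perp$ (the finite-defect lemma \cite{MR0113146} already used above), its closure is $\ker Z$, so $\codim(\dom T\cap\dom T^*)=\dim\sN=n$. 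For the compact-resolvent claim, the estimate $\|(A-\I I)f\|\le\|g\|+\|Z\|\,\|f\|_+$ shows $(T-\I I)^{-1}\colon\sH\to\sH_+$ is bounded; composing with the embedding $\sH_+\hookrightarrow\sH$, which is compact precisely when $A$ has compact resolvent, yields compactness of $(T-\I I)^{-1}$.

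The step I expect to be hardest is $W(T)=\dC$ for non-semibounded $A$, which I would obtain from convexity of $W(T)$ by producing the real axis together with points of arbitrarily large positive and negative imaginary part. First, for $f\in\dom A\cap\ker Z$ the perturbation is inert, since $(Z^\times f,f)=(f,Zf)_{+,-}=0$, so $(Tf,f)=(Af,f)\in\dR$. The compression $P_{\sN^\perp}A\uphar(\dom A\cap\sN^\perp)$ is selfadjoint by Stenger's lemma and still non-semibounded: if it were bounded above by $C$, a min-max count would force $\sigma(A)\cap(C,\infty)$ to consist of at most $n$ eigenvalues, contradicting $\sup\sigma(A)=+\infty$, and symmetrically below. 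Hence $\{(Af,f):f\in\dom A\cap\ker Z,\ \|f\|=1\}=\dR$, so $\dR\subset W(T)$.

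It then remains to make $\IM(Tf,f)=\IM(Z^\times f,f)$ tend to $\pm\infty$. Writing $Z^\times f=\sum_{k=1}^n(f,\omega_k)_{+,-}e_k$ with linearly independent $\omega_k\in\ran Z$ and $e_k\in\sH$, I would work on the finite-codimensional subspace $\cV=\{f\in\sH_+:(f,\omega_k)_{+,-}=0,\ k\ge2,\ (e_k,f)=0,\ k=1,\dots,n\}$ and construct unit vectors $g_m\in\cV$ with $(g_m,\omega_1)_{+,-}\to\infty$. This is exactly where $\ran Z\cap\sH=\{0\}$ is decisive: were $(\,\cdot\,,\omega_1)_{+,-}$ bounded on $\cV$ in $\|\cdot\|_\sH$, a Riesz/annihilator argument would place $\omega_1$ in $\sH+\lspan\{\omega_2,\dots,\omega_n\}$, forcing $\omega_1\in\lspan\{\omega_2,\dots,\omega_n\}$ and contradicting independence. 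Testing with $f_m=\cos\alpha\,u+\e^{\I\beta}\sin\alpha\,g_m$ for a fixed $u$ with $(e_1,u)\ne0$ renders $(Z^\times f_m,f_m)$ asymptotically $\e^{\I\beta}\sin\alpha\cos\alpha\,(e_1,u)(g_m,\omega_1)_{+,-}$, whose modulus tends to $\infty$; choosing the phase $\beta$ sends $\IM(Tf_m,f_m)$ to $+\infty$ or $-\infty$. Combined with $\dR\subset W(T)$ and the convexity of $W(T)$, this yields $W(T)=\dC$. I expect the construction of the $g_m$ and the bookkeeping of the remaining $n-1$ rank-one terms to be the genuine obstacle, all other steps being routine once the rigged-space unitarities are in place.
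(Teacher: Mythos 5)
Your proposal is correct, and its central computations --- the duality identity $(Tf,g)=(f,(\bA+Z)g)_{+,-}$, the equivalence $(\bA+Z)g\in\sH\iff\bigl(I_{\sH}+(\bA+\I I_\sH)^{-1}Z\bigr)g\in\sH_+$, the identification $\dom T\cap\dom T^*=\sH_+\cap\ker Z$ from $\ran Z\cap\sH=\{0\}$, and the codimension count via the finite-defect density lemma --- are exactly the paper's. You differ genuinely in two places, both in the direction of self-containedness. For $\rho(T)\cap\dC_{\pm}\ne\emptyset$ the paper cites an external spectral-enclosure theorem (the spectrum of $T$ lies in a hyperbolic region around $\dR$), while you factor $T\mp\I I_\sH=(A\mp\I I_\sH)\bigl(I+(A\mp\I I_\sH)^{-1}Z^\times\bigr)$ and invert by a Neumann series using the rigged-space unitarities; this also hands you the boundedness of $(T\mp\I I_\sH)^{-1}\colon\sH\to\sH_+$ used for the compact-resolvent claim, which the paper obtains from a separate factorization. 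For $W(T)=\dC$ the paper argues abstractly: $Z^\times$ viewed in $\sH$ has non-densely defined adjoint, hence is non-closable, hence $W(Z^\times)=\dC$ by Kato's theorem, followed by a convexity/contradiction argument; you instead produce explicit test vectors, getting $\dR\subset W(T)$ from Stenger's lemma plus min-max on the compression to $\ker Z$ (where $Z^\times$ is inert) and vertical blow-up of $\IM W(T)$ from the $\|\cdot\|_\sH$-unboundedness of $(\cdot,\omega_1)_{+,-}$ on $\cV$ --- which is precisely the concrete content of the non-closability the paper invokes. Your route is longer but buys transparency, and it actually supplies a step the paper glosses over: the paper writes $\dR=W(A)\subset W(T)$ although only the restriction $A\uphar\ker Z^\times\subset T$ is available, and your min-max observation (a finite-codimension compression of a non-semibounded selfadjoint operator is still non-semibounded) is exactly the missing justification. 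Two routine details remain to polish your argument: adjust phases so that $(g_m,\omega_1)_{+,-}>0$, and normalize the test vectors, since your $f_m$ are not unit vectors --- for fixed $\alpha\in(0,\pi/4)$ one has $0<1-\sin 2\alpha\le\|f_m\|^2\le 1+\sin 2\alpha$, so the imaginary parts of the normalized values $(Tf_m,f_m)/\|f_m\|^2$ still tend to $\pm\infty$.
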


\begin{proof}
Since $Z\in\bB(\sH, \sH_-)$, we have $Z^\times\in\bB(\sH_+,\sH )$.
The assumption $\|Z\|_{\sH\to \sH_-}\!\!<\!1$ in \eqref{final} implies that $b\!:=\!\|Z^\times\|_{\sH_+\to \sH}\!<\!1$ which means~that
\[
  \| Z^\times h \|^2 \le b^2 \| h\|_+^2 = b^2 \| h\|^2 + b^2 \| A h \|^2, \quad h \in \sH_+,
\]
i.e.\ $Z^\times$ viewed as an operator in $\sH$ with domain $\dom Z^\times = \sH_+$ is $A$-bounded with $A$-bound $\le b <1$. Hence,
by \cite[Thm.~VI.1.1]{MR1335452}, the operator $T$ is closed in $\sH$ and thus $\dom T^*$ is dense in $\sH$.
By \cite[Thm.~2.1 i)]{MR3488056}, the spectrum of $T$ lies in some hyperbolic region around the real axis, and so
 $\rho(T)\cap \{ z\in\dC: \IM z \gtrless 0 \} \ne\emptyset$.

To study the adjoint $T^*$, we define the operator
\[
   Q:=\bA+Z \in\bB(\sH, \sH_-).
\]
Then, for $h\in \sH$, we have the equivalences
\begin{align}
\nonumber
   Qh
   \in \sH & \iff \bA h {\pm} \I h+Zh\in \sH \\
\label{form1}
   &\iff (\bA {\pm} \I I_\sH )\left(I_H+(\bA {\pm} \I I_\sH )^{-1}Z\right) h\in \sH \\
   &\iff \left(I_{\sH} +(\bA {\pm} \I I_\sH )^{-1}Z\right) h\in \sH_+.
\nonumber
\end{align}

Observe that, since $||Z||_{\sH\to \sH_-}<1$ by assumption \eqref{final},
\[
   I_{\sH}+(\bA\pm \I I_\sH )^{-1}Z\in \bB(\sH ), \quad {\big(I_{\sH}+(\bA\pm \I I_\sH )^{-1}Z \big)^{-1} \in \bB(\sH)}.
\]

The operator $T$ in \eqref{eq:singpert} is the operator 
\[
   Q^\times=A+Z^\times \in \bB(\sH_+, \sH)
\]
viewed as an operator acting in $\sH$ since
\[
  \{ h \in \sH : Q^\times h \in \sH \} = \sH_+ = \dom A = \dom T.
\]
Since $(Tf,g)=(Q^\times f,g) = (f, Qg)_{+,-}$, $f \in \dom T = \sH_+$ $g \in \sH$, it follows that
\[
  T^* = Q \uphar {\dom T^*} = (\bA + Z ) \uphar {\dom T^*}, \quad
  \dom T^*=\left\{h\in \sH: Q h\in\sH \right\},
\]
and thus, due to \eqref{form1},
\begin{align}\label{dom1a}
   \dom T^*
   &=\{h\in \sH: \left(I_{\sH}+(\bA {\pm} \I I_\sH )^{-1}Z\right) h\in \sH_+\},
   \end{align}
which proves \eqref{eq:singpert*}. Altogether, by \eqref{eq:singpert}, \eqref{dom1a},
\begin{align}
\nonumber
    \dom T\cap\dom T^*
    & = \{ h \in \sH : h \in \sH_+, \ (I_\sH+(\bA- \I I_\sH )^{-1}Z) h\in \sH_+ \}\\
\nonumber
    & = \{ h \in \sH : h \in \sH_+, \ (\bA- \I I_\sH )^{-1}Z h\in \sH_+ \} \\
\label{inter}
    & = \{ h \in \sH : h\in \sH_+, \ Zh\in \sH \}.
\end{align}
By \eqref{inter},  the assumption $\ran Z\cap \sH=\{0\}$ in \eqref{final} yields that
if $h\in\dom T\cap\dom T^*$, then $h\in \sH_+$ and $Zh=0$. Thus
\begin{equation}
\label{H_+Z}
   \dom T\cap \dom T^* = \sH_+\cap \ker Z;
\end{equation}
in particular, $\ker Z=\{0\}$ implies that $\dom T\cap\dom T^*=\{0\}$.

The condition $\dim\ran Z=n<\infty$ in \eqref{final} implies that $\dim\ran Z^\times=n<\infty$ and hence,
if we decompose the Hilbert space $\sH$ as
\[
   \sH= \ker Z\oplus (\ker Z)^\perp =\ker Z\oplus {\ran Z^\times},
\]
then \cite[Lemma 2.1]{MR0113146} shows that $\sH_+\cap\ker Z$ is dense in $\ker Z$.
Together with \eqref{H_+Z} this yields
$\codim(\dom T^*\cap\dom T) = \dim \ran Z^\times = n$.
If we view  $Z^\times$ as an operator in $\sH$ with domain $\dom Z^\times = \sH_+$ and denote its adjoint by $(Z^\times)^*$, then the assumption
$\ran Z\cap \sH=\{0\}$ in \eqref{final} implies that
\begin{align*}
   \dom (Z^\times)^* &= \big\{ g \in \sH: h\mapsto (Z^\times h,g) = (h,Zg)_{+,-} \mbox{ is continuous on } \dom \sH_+ \big\} \\
                     &= \big\{ g \in \sH: Zg \in \sH \big\} = \{0\}
\end{align*}
and hence $(\ker Z^\times)^\perp \!=\! \overline{\ran (Z^\times)^*} \!=\! \{0\}$ in $\sH$, i.e.\ $\ker Z^\times$ is dense in $\sH$.

Therefore, if we set $A_0:= A\uphar \ker Z^\times$, $\dom A_0:=\ker Z^\times$, then $A_0$ is a closed densely defined and symmetric operator in $\sH$; note that $T$ is an improper extension of $A_0$, i.e.\ $T$ is not a restriction of $A_0^*$.
Since $ \dom (Z^\times)^* \!=\! \{0\}$ is not dense in $\sH$, the operator $Z^\times$ viewed in $\sH$ is not closable and thus
$W(Z^\times)=\dC$ by \cite[Thm.\ V.3.4]{MR1335452}.

Now suppose that $A$ is not semibounded, i.e.\ $W(A)=\dR$,
and assume that $W(T)\!\ne\! \dC$.
Since $A_0 = A\uphar \ker Z^\times \subset T$, it follows that $\dR = W(A) \subset W(T)$ 
and hence $W(T)$ is contained in a half-plane of the form $P^{-}_{b_0}\!=\!\{z\!\in\!\dC: \IM z\!\le\! b_0\}$ or $P^{+}_{b_0}\!=\!\{z\!\in\!\dC:\IM z\!\ge\! b_0\}$ for some $b_0 \!\in\! \dR$.
Let, e.g.\  $W(T) \!\subset\! P^{-}_{b_0}$ and choose $b>b_0$. Since $W(Z^\times)=\dC$, there exists $h\!\in\!\dom T$, $\|h\|\!=\!1$,  such that $(Z^\times h,h)=\I b$.
Then $(Th,h)=(Ah,h)+ (Z^\times h,h)\notin W(T)$, a contradiction.

The last claim follows from the fact that $I_\sH + Z^\times (A-\lambda I_\sH )^{-1}$ is boundedly invertible for $\lambda \in \dC$, $\IM \lambda >1$, and that hence $(T-\lambda I_\sH )^{-1} = (A-\lambda I_\sH )^{-1} ( I_\sH + Z^\times (A-\lambda I_\sH )^{-1} )^{-1}$.
\end{proof}

\begin{remark}
Note that the operator  $T = Q^\times \uphar \{h\in \sH:Q^\times h\in \sH\}$
is said to arise from $A$ by a singular perturbation, see, e.g.\ \cite{MR1765471}.
\end{remark}

\begin{example}
\emph{
The motivation for the construction in this section came from a counter-example given in \cite[Ex.\ 3.5]{MR4083777}
to disprove the equivalence of different characterizations of the so-called essential numerical range for
unbounded operators. There $T=A+ \Phi(\cdot)g$ where $\Phi : \sH\to\dC$ is an unbounded linear functional which is $A$-bounded and $g\in \sH$ is a fixed element, see also \cite{MR2897729}. In this case, $\overline{\dom T\cap \dom T^*}=\{g\}^\perp$ and hence $ \codim\left(\dom T\cap \dom T^*\right)=1$.
A~con\-crete example in $\sH=L_2(\dR)$ is $Tf=\I f' + f(0) g$, $\dom (T)=W_2^1(\dR)$, with some function $g\in L_2(\dR)$.
}
\end{example}


\section{Maximal accretive and  maximal sectorial operators $T$ with $\dom T=\dom T^*$}
\label{sec:equality}

In this section we study the case $\dom T=\dom T^*$. By means of perturbation results we characterize classes of $m$-accretive and $m$-sectorial operators $T$ for which the domain equality $\dom T=\dom T^*$ holds.
At the end, we mention a more special situation of quasi-selfadjoint extensions $T$ of a closed, not necessarily densely defined symmetric operator $A$
for which $\dom T=\dom T^*$ was proved in \cite{MR2828331}, \cite{MR1383821}.

Note that the weaker property that $\dom T \cap \dom T^*$ is a core of $T$ is covered by Kato's result \cite[Thm.\ 5.1]{MR0151868}.
It shows that if $A$ is $m$-accretive, then $T\!:=\!A^{1/2}$ has the property that $\dom T \cap \dom T^*$ is a core of $T$; moreover, in this case $\dom A \!=\! \dom A^*$ implies that $\dom T\!=\!\dom T^*$, see \eqref{ravhalf}.

In the sequel we consider operators of the form $T=A+ \I B$ where both $A$, $B$ are unbounded selfadjoint and symmetric, respectively, operators. Note that, in general, even if $B$ is $A$-bounded, i.e.\ $\dom A \subset \dom B$ and hence $\dom A \subset \dom B^*$, only the inclusion $T^* \supset A - \I B^* = A - \I B$ holds.

The following proposition was proved in the note \cite[Prop.\ 1]{MR627724} for the case of positive definite $A$
with $\dom A=\dom B$, not using the perturbation arguments below.

\begin{proposition}
\label{rav1}
Let $A$ be a non-negative selfadjoint operator in $\sH$ and let $B$ be a symmetric operator with
$\dom A^{1/2} \subset \dom B$. Then the \vspace{-0.5mm}operator
\[
   T:=A + \I B,\quad \dom T:=\dom A,
\vspace{-0.5mm}
\]
has the following properties:
\begin{enumerate}
\item[{\rm i)}] $T$ is $m$-accretive and $T-aI$ is $m$-sectorial for every {$a\in \big(\!-\!\infty,\min \sigma(A)\big)$};
\item[{\rm ii)}] if $A$ is positive definite, then $T$ is $m$-sectorial with $0\in\rho(T)$;
\item[{\rm iii)}] $\dom T^*=\dom A$, $T^*=A-\I B$ \vspace{-0.5mm}and
\[
  \RE T:= \frac 12 (T+T^*)=A,\quad \IM T:= \frac 1{2\I} (T-T^*) =B{\uphar \dom A},
  	\vspace{-0.5mm}
\]
\item[{\rm iv)}] $\dom T^{1/2}=\dom T^{*1/2}=\dom A^{1/2}$.
\end{enumerate}
\end{proposition}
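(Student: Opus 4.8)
The plan is to realize $T$ as the $m$-sectorial operator (in Kato's sense, i.e.\ with a vertex possibly on the negative real axis) associated with a closed sectorial sesquilinear form, and to read off all four assertions from the form machinery of \cite{MR1335452} together with Kato's square-root results \cite{MR0151868}. The first and decisive step is to show that $B$ is a form-bounded perturbation of $A$ with relative bound $0$. Since $\dom A^{1/2}\subset\dom B$ is dense, $B$ is densely defined and symmetric, hence closable. Put $R:=(A+I)^{1/2}$, a positive definite, boundedly invertible operator with $\dom R=\dom A^{1/2}$; then $BR^{-1}$ is everywhere defined, and a closed-graph argument (using the closability of $B$ and the boundedness of $R^{-1}$) shows $BR^{-1}\in\bB(\sH)$. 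Writing $(Bu,u)=(BR^{-1}Ru,R^{-1}Ru)$ and applying Cauchy--Schwarz and Young's inequality then yields, for every $\delta>0$, a constant $M_\delta$ with $|(Bu,u)|\le\delta\|A^{1/2}u\|^2+M_\delta\|u\|^2$, $u\in\dom A^{1/2}$.

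Next I would introduce the form $\mathfrak{t}[u,v]:=(A^{1/2}u,A^{1/2}v)+\I(Bu,v)$ on $\dom\mathfrak{t}:=\dom A^{1/2}$, whose real part is the closed non-negative form $\mathfrak{a}$ of $A$ and whose symmetric imaginary part is $\mathfrak{b}[u,v]=(Bu,v)$. By the bound above with some $\delta<1$ and the perturbation theory for sesquilinear forms \cite[Ch.~VI]{MR1335452}, $\mathfrak{t}$ is densely defined, closed and sectorial. By the first representation theorem \cite[Sect.~VI.2.1]{MR1335452}, using that the operator associated with $\mathfrak{a}$ is $A$, a short computation identifies the operator associated with $\mathfrak{t}$ as precisely $A+\I B$ on $\dom A$, i.e.\ $T$, which in particular is closed. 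Running the same argument for the adjoint form $\mathfrak{t}^*[u,v]=(A^{1/2}u,A^{1/2}v)-\I(Bu,v)$ (here the symmetry of $B$ is used) and invoking \cite[Thm.~VI.2.5]{MR1335452}, I obtain $T^*=A-\I B$ with $\dom T^*=\dom A=\dom T$; this is assertion~iii), and the stated formulas for $\RE T$ and $\IM T$ follow by substituting on $\dom A$.

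With iii) in hand, parts i) and ii) are short. For ii), if $A$ is positive definite then $\|A^{1/2}u\|^2\ge m\|u\|^2$ absorbs the $\|u\|^2$-term, so $|\IM(Tu,u)|=|(Bu,u)|$ is bounded by a constant multiple of $\RE(Tu,u)$, meaning $\mathfrak{t}$ is sectorial with vertex $0$; coercivity $\RE(Tu,u)\ge m\|u\|^2$ then gives that $T$ is $m$-sectorial with $0\in\rho(T)$. For i), given $a<\min\sigma(A)$ the operator $A-aI$ is positive definite with $\dom(A-aI)^{1/2}=\dom A^{1/2}\subset\dom B$, so ii) applied to $A-aI$ shows $T-aI$ is $m$-sectorial. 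That $T$ itself is $m$-accretive follows from the closedness and dense definition of $T$ together with the accretivity of $T^*=A-\I B$ (since $\RE(T^*u,u)=\|A^{1/2}u\|^2\ge0$), via the criterion that a densely defined closed operator with accretive adjoint is $m$-accretive.

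Finally, for iv) I would fix $a<\min\{0,\min\sigma(A)\}$, so that $a<0$ and $T-aI$ is $m$-sectorial with $\dom(T-aI)=\dom A=\dom(T-aI)^*$. Kato's square-root corollary \eqref{ravhalf}, applied to $T-aI$, gives $\dom(T-aI)^{1/2}=\dom(T-aI)^{*1/2}=\dom(A-aI)^{1/2}=\dom A^{1/2}$. It then remains to replace $T-aI$ by $T$: since $-a>0$ and $T$ is $m$-accretive, Kato's integral representation of fractional powers \cite{MR0151868} shows that $(T-aI)^{1/2}-T^{1/2}$ extends to a bounded operator, so $\dom T^{1/2}=\dom(T-aI)^{1/2}$, and similarly $\dom T^{*1/2}=\dom(T-aI)^{*1/2}$; combining yields iv). The two steps I expect to require the most care are the closed-graph proof of the relative form bound, on which the whole construction rests, and in iv) the shift-invariance of the square-root domain: because $T$ is in general only $m$-accretive and not sectorial with vertex $0$, I cannot apply \eqref{ravhalf} to $T$ directly and must route through the genuinely sectorial operator $T-aI$ and the boundedness of the difference of square roots.
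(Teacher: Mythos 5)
Your proof is correct, and its central step takes a genuinely different route from the paper's. The paper settles part iii) at the operator level: from $\dom A^{1/2}\subset\dom B$ it deduces (citing \cite[Prop.\ 2.1.19]{MR2463978}) that $B$ is $A$-bounded with $A$-bound $0$, and then invokes the Hess--Kato theorem \cite[Cor.\ 1]{MR0282244} on adjoints of relatively bounded perturbations to obtain $T^*=A-\I B$, $\dom T^*=\dom A$, in one stroke; the sectoriality of $T-aI$ is then proved by essentially the numerical-range estimate you use (boundedness of $B(A-aI)^{-1/2}$), m-accretivity from accretivity of $T$ and $T^*$, and iv) exactly as you do, by applying \eqref{ravhalf} to $T-aI$ and quoting Kato's shift lemma \cite[Lemma A2]{MR0138005}. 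You instead realize $T$ as the operator associated with the closed sectorial form $\mathfrak t=\mathfrak a+\I(B\,\cdot,\cdot)$ on $\dom A^{1/2}$ and read off $T^*$ from the adjoint form via \cite[Thm.\ VI.2.5]{MR1335452}; the only hard analytic input is the relative form bound, which your closed-graph argument for $B(A+I)^{-1/2}$ correctly supplies (note that for this purely imaginary perturbation even a fixed constant, not arbitrarily small $\delta$, would suffice, since $\RE\mathfrak t=\mathfrak a$ is closed in any case). What your form route buys is self-containedness within \cite[Ch.\ VI]{MR1335452}: closedness of $T$ and the adjoint formula come out of the representation theorems simultaneously, with no appeal to the delicate stability theorem for adjoints of operator sums. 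What the paper's route buys is brevity given the citations, and it yields $T^*=A-\I B$ without having to identify the operator associated with $\mathfrak t$; in your argument that identification is the step where the hypothesis $\dom A^{1/2}\subset\dom B$ (rather than merely $\dom A\subset\dom B$) is used, since $Bu\in\sH$ for all $u$ in the form domain is what lets you conclude $A^{1/2}u\in\dom A^{1/2}$, hence $u\in\dom A$. Your justification of $\dom T^{1/2}=\dom(T-aI)^{1/2}$ via the integral representation in effect reproves the cited Lemma A2, which is fine.

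One point of precision: the criterion you invoke for m-accretivity must include that $T$ itself is accretive; ``densely defined, closed, with accretive adjoint'' alone is not sufficient (take $\cA=S^*$ for a closed non-negative symmetric but not selfadjoint $S$: then $\cA^*=S$ is accretive while $\cA$ is not). This costs nothing here, since $\RE(Tu,u)=(Au,u)\ge 0$ for $u\in\dom A$ because $B$ is symmetric -- but it should be said, as the paper does.
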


\begin{proof}
Clearly, $T$ is accretive. The assumption $\dom A^{1/2} \subset \dom B$ implies that $B$ is $A$-bounded with $A$-bound $0$,
see e.g.\ \cite[Prop.\ 2.1.19]{MR2463978}, and, since $B$ is symmetric, also that $B^*$ is $A^{1/2}$-bounded with $A$-bound $0$.
Hence, by \cite[Cor.\ 1]{MR0282244}, it follows that $T^*= A - \I B^* = A- \I B$, $\dom T^*=\dom A$, which proves the first claim in iii). Thus $T^*$ is accretive as well and so $T$ is $m$-accretive.

Let $a \in \big(\!-\!\infty, \min \sigma(A)\big) $ be arbitrary. Since $\dom (A-aI)^{1/2} = \dom A^{1/2}$, the operator $B(A-aI)^{-1/2}$
is bounded and we can estimate, for $f\in \dom T = \dom A$,
\begin{align*}
  \big| \IM  ((T-a I)f,f)\big| =
	\big|(Bf,f)\big| & = \big| (B(A-a I)^{-1/2} (A-a I)^{1/2} f,(A-a I)^{-1/2} (A-aI)^{1/2} f) \big| \\
	                 & \le \|B(A-aI)^{-1/2}\| \,\|(A-aI)^{-1/2}\| \,\|(A-aI)^{1/2}f\|^2 \\
			 			 & = \|B(A-aI)^{-1/2}\| \,\|(A-aI)^{-1/2}\| \,\RE \big( (T-aI) f,f \big).
\end{align*}
This proves that $T\!-\!aI$ is $m$-sectorial for all $a \in \big(\!-\!\infty, \min \sigma(A)\big)$; in particular, if $A$ is positive definite, we can choose $a=0$. This completes the proof of i) and ii).

Since $\dom T = \dom T^* = \dom A$, we have $\dom (T\pm T^*) = \dom A$ and so the second claim in iii) for the operators $\RE T$, $\IM T$ is  immediate.

Finally, if we fix $a \in \big(\!-\!\infty, \min \sigma(A)\big)$, \eqref{ravhalf} applied to the $m$-sectorial operator $T-aI$~yields
\[
  \dom (T- a I)^{1/2}=\dom (T^*-aI)^{1/2}  = \dom \mathfrak t_a = \dom \mathfrak a = \dom A^{1/2},
\]
where $\mathfrak t_a$ and $\mathfrak a$ are the forms induced by $T\!-\!aI$ and $A$, respectively. Now iv) follows if we use that
$\dom (T\!-\!aI)^{1/2}=\dom T^{1/2}$, $\dom (T^*\!-\!aI)^{1/2}=\dom T^{*1/2}$ by \cite[Lemma A2]{MR0138005}.
\end{proof}

\begin{theorem}
\label{rav2}
Let $\cB$ be a closed densely defined symmetric operator, let $F$ be a bounded positive definite selfadjoint operator in $\sH$,
and define a family $T(\mu)$, $\mu \in \dC$, of operators by
\begin{equation}\label{opfam}
   T(\mu):= \cB^*F\cB+\mu\cB,\quad \dom T(\mu):=\dom \cB^*F\cB, \qquad \mu \in \dC.
\end{equation}
Then $T(\mu)$, $\mu \!\in\! \dC$, is quasi-$m$-accretive, i.e.\ $T(\mu)\!+\! c I$ is $m$-accretive for some $c\!\in\!\dR$ with adjoint
\[
    T(\mu)^*\!=T(\overline{\mu})=\cB^*F\cB+\overline{\mu}\cB, \quad \dom T(\mu)^*\!=\dom T(\mu)=\dom \cB^*F\cB, \qquad \mu \in \dC,
\]
and it has the following properties:
\begin{enumerate}
\item[{\rm i)}] if $\mu = a \in \dR$, then $T(a)=\cB^*F\cB+a\cB$ is selfadjoint and bounded from below;
\item[{\rm ii)}] if $\mu \!=\! \I a$, $a\!\in\! \dR$, then $T(\I a)\!=\!\cB^*F\cB\!+\!\I a\cB$ is $m$-accretive,
$\dom T(\I a)^{1/2}\!=\!\dom T(\I a)^{*1/2}$ $=\!\dom\cB$ and, if $\cB$ has bounded inverse, $T(\I a)$ is $m$-sectorial with $0\in\rho(T(\I a))$;
conversely if $\ker \cB=\{0\}$ and if $\,T(\I a)$ is sectorial, then $\cB$ has bounded inverse.
\end{enumerate}
\noindent
If, in addition, $\,\cB$ is non-negative, we also have:
\begin{enumerate}
\item[{\rm iii)}]
if $\RE \mu \!\ge\! 0$, then $T(\mu)=\cB^*F\cB+\mu \cB$ is $m$-accretive, $\dom T(\mu)^{1/2}\!=\!\dom T(\mu)^{*1/2}\!=\!\dom\cB$
and, if $\cB$ is positive definite, $T(\mu)$ is $m$-sectorial with $0\in\rho(T(\mu))$;
\item[{\rm iv)}]
if $\RE \mu \!\ge\! 0$ and $\IM \mu\! \ge\! 0$, i.e.\ $\mu \!=\! a \e^{\I \alpha}$, $a \!\in\! \dR$, $\alpha \!\in\! [0,\frac \pi 2]$,
then, in addition to {\rm iii)}, $T(a\e^{\I \alpha})\!=\!\cB^*F\cB\!+\!a\e^{\I \alpha}\cB$
is $m$-dissi\-pative with
\begin{equation}
\label{numrT}
   W(T(a\e^{\I \alpha}))\subset \{ z\in \dC: 0\le \arg z \le \alpha\},
\end{equation}
$T(a\e^{\I \alpha})$ is $m$-sectorial with semi-angle $\alpha$ if $\alpha\! <\! \frac \pi 2$ and $T(a)$ is non-negative if $\alpha=0$.
\end{enumerate}
\end{theorem}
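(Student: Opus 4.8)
The plan is to reduce everything to Proposition \ref{rav1} by treating $T(\mu)$ as a perturbation of the selfadjoint part $T(\RE\mu)=\cB^*F\cB+\RE\mu\,\cB$ by $\I\,\IM\mu\,\cB$, after first analysing the non-negative form underlying $\cB^*F\cB$. Since $F$ is bounded and positive definite, there is $\delta>0$ with $F\ge\delta I$, so the form $\mathfrak c[u,v]:=(F\cB u,\cB v)$, $u,v\in\dom\cB$, is non-negative, and its form norm $\|u\|^2+(F\cB u,\cB u)$ is equivalent to the graph norm of $\cB$; as $\cB$ is closed, $\mathfrak c$ is closed. Hence $\mathfrak c$ is associated with a non-negative selfadjoint operator, readily identified with $C:=\cB^*F\cB$ on $\dom\cB^*F\cB$, with form domain $\dom C^{1/2}=\dom\cB$. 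Moreover $\|\cB u\|^2\le\delta^{-1}(F\cB u,\cB u)=\delta^{-1}\|C^{1/2}u\|^2$, so $\cB$ is $C^{1/2}$-bounded; this single estimate drives all the relative (form-)boundedness arguments below.

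For part i), real $\mu=a$, the perturbing form $a(\cB u,v)$ is symmetric (by symmetry of $\cB$) and, by the estimate above together with Young's inequality, $C^{1/2}$-form-bounded with relative bound $0$. The KLMN theorem then yields a selfadjoint, bounded-below operator with form domain $\dom\cB$; identifying the functional $v\mapsto\mathfrak c[u,v]+a(\cB u,v)$ with an element of $\sH$ shows its domain is exactly $\{u\in\dom\cB:F\cB u\in\dom\cB^*\}=\dom\cB^*F\cB$ and the operator is $\cB^*F\cB+a\cB=T(a)$. For $\mu=\I a$ (part ii) I would apply Proposition \ref{rav1} directly with $A=C\ge0$ and symmetric $B=a\cB$, using $\dom A^{1/2}=\dom\cB=\dom B$: this gives $T(\I a)$ $m$-accretive, $T(\I a)^*=C-\I a\cB=T(\overline{\I a})$ with $\dom T(\I a)^*=\dom\cB^*F\cB$, and, via \eqref{ravhalf} and \cite[Lemma A2]{MR0138005}, $\dom T(\I a)^{1/2}=\dom T(\I a)^{*1/2}=\dom C^{1/2}=\dom\cB$. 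If $\cB$ has bounded inverse, then $\|\cB u\|\ge c\|u\|$ forces $C$ positive definite, whence Proposition \ref{rav1} ii) gives $m$-sectoriality with $0\in\rho(T(\I a))$.

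For arbitrary $\mu$ I would write $T(\mu)=T(\RE\mu)+\I\,\IM\mu\,\cB$ and fix $c\in\dR$ with $\hat A:=T(\RE\mu)+cI\ge0$; since by part i) $\hat A$ is selfadjoint with $\dom\hat A=\dom\cB^*F\cB$ and $\dom\hat A^{1/2}=\dom\cB$, Proposition \ref{rav1} applied to $\hat A$ and $B=\IM\mu\,\cB$ shows $T(\mu)+cI$ is $m$-accretive, i.e.\ $T(\mu)$ is quasi-$m$-accretive, and yields $T(\mu)^*=T(\RE\mu)-\I\,\IM\mu\,\cB=\cB^*F\cB+\overline\mu\,\cB=T(\overline\mu)$ with $\dom T(\mu)^*=\dom T(\mu)=\dom\cB^*F\cB$. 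When $\cB\ge0$ and $\RE\mu\ge0$ (part iii), the operator $A:=T(\RE\mu)$ is itself non-negative (its form $(F\cB u,\cB u)+\RE\mu(\cB u,u)\ge0$), so no shift is needed and Proposition \ref{rav1} gives $m$-accretivity with $\dom T(\mu)^{1/2}=\dom T(\mu)^{*1/2}=\dom A^{1/2}=\dom\cB$; if moreover $\cB$ is positive definite, then $A$ is positive definite and Proposition \ref{rav1} ii) gives $m$-sectoriality with $0\in\rho$. For $\mu=a\e^{\I\alpha}$, $\alpha\in[0,\frac\pi2]$ (part iv), the inclusion \eqref{numrT} is a direct computation: $\RE(T(\mu)u,u)=(F\cB u,\cB u)+a\cos\alpha\,(\cB u,u)\ge0$ and $\IM(T(\mu)u,u)=a\sin\alpha\,(\cB u,u)\ge0$ (both using $\cB\ge0$), with $\IM(T(\mu)u,u)\le\tan\alpha\,\RE(T(\mu)u,u)$, giving semi-angle $\alpha$ for $\alpha<\frac\pi2$ and non-negativity for $\alpha=0$.

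The step I expect to be genuinely delicate is the converse in part ii): that sectoriality of $T(\I a)$ (with $a\ne0$, $\ker\cB=\{0\}$) forces $\cB$ to have bounded inverse. Sectoriality is equivalent to the form inequality $|a|\,|(\cB u,u)|\le\tan\alpha\,(F\cB u,\cB u)$ for all $u\in\dom\cB$, and this alone does not bound $\|u\|$ by $\|\cB u\|$, the obstruction being vectors with $\cB u$ nearly orthogonal to $u$. The plan is a contradiction argument: if $\cB$ is not bounded below, then $0\in\sigma_{\rm app}(\cB)$, and, since $\sigma_{\rm app}(\cB)\subset\dR$ for symmetric $\cB$ (as $\|(\cB-\lambda)u\|\ge|\IM\lambda|\,\|u\|$), one produces approximate eigenvectors $u_n$, $\|u_n\|=1$, for real approximate eigenvalues $\lambda_n\to0$, $\lambda_n\ne0$; then $(\cB u_n,u_n)\to\lambda_n$ while $(F\cB u_n,\cB u_n)=O(\lambda_n^2)$, so the ratio in the sectoriality inequality blows up like $|a|/(|\lambda_n|\,\|F\|)$, the desired contradiction. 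The one point needing care is excluding that $0$ is an isolated point of $\sigma_{\rm app}(\cB)$ unaccompanied by nearby nonzero approximate eigenvalues, which is exactly where the quadratic-versus-linear dependence on $\cB$ in the real and imaginary parts is essential.
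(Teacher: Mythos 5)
Most of your proposal is correct and runs along essentially the same lines as the paper's proof: you identify $C=\cB^*F\cB$ as a non-negative selfadjoint operator with $\dom C^{1/2}=\dom\cB$ (via the closed form $(F\cB u,\cB v)$, where the paper invokes von Neumann's theorem for $(F^{1/2}\cB)^*F^{1/2}\cB$), you produce the selfadjoint part $T(\RE\mu)+cI$ by perturbation (KLMN plus the domain identification, where the paper uses Kato--Rellich), and you then feed the pair $\big(T(\RE\mu)+cI,\ \IM\mu\,\cB\big)$ into Proposition \ref{rav1} to get quasi-$m$-accretivity, the adjoint formula, and parts i), iii), iv). These differences are immaterial.

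The genuine gap is exactly the point you flagged yourself: the converse in ii). Your argument needs nonzero \emph{real} points of $\sigma_{\rm app}(\cB)$ accumulating at $0$, and for a merely symmetric (non-selfadjoint) $\cB$ this can fail; worse, for such operators the sectoriality inequality can hold while $\cB$ is not bounded below, so there is no contradiction to be reached by \emph{any} repair of this route. Concretely, let $\sH=\sH_0\oplus\sH_1$ with orthonormal bases $\{e_n\}$ of $\sH_0$ and $\{f_n\}$ of $\sH_1$, let $A_1f_n=nf_n$ and $T:=0\oplus A_1$. Since $\sigma(T)\subset\{0\}\cup[1,\infty)$ we have $0\le (Tu,u)\le\|Tu\|^2$, so for \emph{every} restriction $\cB$ of $T$ and $F=I$ the operator $T(\I a)=\cB^*\cB+\I a\cB$ satisfies
\[
  \big|\IM (T(\I a)u,u)\big|=|a|\,(\cB u,u)\le |a|\,\|\cB u\|^2=|a|\,\RE (T(\I a)u,u),
\]
i.e.\ $T(\I a)$ is sectorial (indeed $m$-sectorial, being $m$-accretive by the first part of ii)). Now take $Ke_n:=n^{-2}f_n$, so that $\ran K=\dom A_1^2$, choose $W\subset\dom A_1$ dense in $\sH_1$, closed in the graph norm of $A_1$, with $W\cap\dom A_1^2=\{0\}$ (this is the equivalent formulation (v) of von Neumann's theorem quoted in Section \ref{sec:prelim}), and set $\cB:=T\uphar D$ with $D:=\{(x,Kx+w): x\in\sH_0,\,w\in W\}$. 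Then $\cB$ is symmetric, densely defined and closed (use $\|x\|\le\|u\|_T$ for $u=(x,Kx+w)$ together with the boundedness of $K$ and $A_1K$ and the graph-closedness of $W$), and $\ker\cB=\{0\}$ because $\ran K\cap W=\{0\}$; yet $u_n=(e_n,n^{-2}f_n)\in D$ gives $\|u_n\|\ge 1$ and $\|\cB u_n\|=n^{-1}\to0$, so $\cB$ has no bounded inverse, and moreover $\sigma_{\rm app}(\cB)\subset\sigma(T)=\{0\}\cup\dN$, so $0$ is an isolated point of $\sigma_{\rm app}(\cB)$. Thus the "point needing care" in your last paragraph is not a technicality: it is a fatal obstruction, and in fact the converse implication as stated requires a further hypothesis on $\cB$.

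For comparison, the paper proves the converse by polarization: sectoriality yields $|(\cB f,g)|\le\gamma\|\cB f\|\,\|\cB g\|$ on $\dom\cB$, equivalently $|(\psi,\cB^{-1}\phi)|\le\gamma\|\psi\|\,\|\phi\|$ for $\psi,\phi\in\ran\cB$, from which boundedness of $\cB^{-1}$ is concluded. Note that this last step tests $\cB^{-1}\phi$ only against vectors of $\ran\cB$, so it silently uses $\overline{\ran\cB}=\sH$, i.e.\ $\ker\cB^*=\{0\}$ --- which follows from $\ker\cB=\{0\}$ for selfadjoint but not for general symmetric $\cB$ (in the example above $\ker\cB^*\supset\sH_0$). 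Under the additional hypothesis of dense range the paper's polarization argument closes; your spectral argument, by contrast, is sound only in the selfadjoint case, where an isolated point of the spectrum of an injective operator cannot lie in the spectrum at all.
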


\begin{proof}
Since $F$ is a positive definite bounded selfadjoint operator, the operator $F^{1/2}\cB$ has dense domain $\dom(F^{1/2}\cB)=\dom \cB$ and is closed  with adjoint
\[
   (F^{1/2}\cB)^*=\cB^*F^{1/2},\quad \dom(F^{1/2}\cB)^*=F^{-1/2}\dom\cB^*.
\]
Thus, by \cite[Thm.\ V.3.24]{MR1335452} also due to von Neumann,
$\cB^*F\cB \!=\! (F^{1/2}\cB)^* F^{1/2}\cB$ is a non-ne\-ga\-tive selfadjoint operator, $\dom \cB^*F\cB$ is a core of $\cB$
and $\dom (\cB^*F\cB)^{1/2} \!=\! \dom F^{1/2}\cB \!=\! \dom \cB$.

The latter implies that $\cB$ is  $\cB^*F\cB$-bounded with $\cB^*F\cB$-bound 0,
see e.g.\ \cite[Prop.\ 2.1.19]{MR2463978}, and hence so is $\mu \cB$ for any $\mu\in \dC$. This yields, first, that $T(\mu)^*$ has the claimed form
by \cite[Cor.\ 1]{MR0282244} and, secondly, that $A(\mu):= \cB^*F\cB + \RE \mu \,\cB$ is selfadjoint and bounded from below by \cite[Thm.\ V.4.3, Thm.\ V.4.11]{MR1335452}, i.e.\ $A(\mu) + c I \ge 0$ for some $c\in \dR$.  Now Theorem \ref{rav1} applied to the operators $A(\mu)+c I$ and $\IM \mu \,\cB$ shows that $T(\mu)+c I$ is $m$-accretive.

i) The claims follow from what was shown above since in this case $\IM \mu \!=\! 0$  and~$T(a)\!=\!A(a)$.

ii) By what we proved above, the operators $\cB^*F\cB$ and $\cB$ satisfy the assumptions of Pro\-position \ref{rav1}.
Since $F$ is positive definite and $\cB$ has bounded inverse, $\cB^*F\cB$ is positive definite.
Thus all claims in ii) except for the last one follow from Proposition \ref{rav1}. To prove the last claim, let
$\ker \cB=\{0\}$ and let $T(\I a)$ be sectorial.
The latter implies that, for some~$\gamma \!>\!0$,
\[
   \left|\left(\cB f,g\right)\right|\le \gamma \left\|\cB f\right\|\left\|\cB g\right\|, \quad f,g\in\dom\cB,
\vspace{-0.5mm}
\]
and \vspace{-0.5mm} hence
\[
   \left|\left(\psi,\cB^{-1}\phi\right)\right|\le \gamma \left\|\psi\right\|\left\|\phi\right\|, \quad  \psi,\phi\in\dom\cB^{-1}.
\]
It follows that $\left\|\cB^{-1}\phi\right\|\le\left\|\phi\right\|$ $\phi\in\dom\cB^{-1}$.

iii)
Since $\RE\mu \ge 0$ and $\cB$ is non-negative, $A(\mu)\!=\!\cB^* F \cB \!+\! \RE \mu \,\cB$ is non-negative. Now all
claims follow from Proposition \ref{rav1} applied to the operators $A(\mu)$ and~$\IM \mu \,\cB$.

iv) The enclosure \eqref{numrT} is immediate since $\cB^*F\cB$ is always non-negative and $\cB$ is assumed to be non-negative.
Together with $\RE \mu = a \cos \alpha \ge 0$, the latter implies that $A(\mu) \ge 0$, so that we can choose $c=0$ and
$T(a \e^{\I \alpha})$ is $m$-accretive and, due to \eqref{numrT}, $m$-dissipative.
\end{proof}

\begin{corollary}
\label{rav2cor}
Under the assumptions of Theorem {\rm \ref{rav2}} with $\,\cB$ non-negative, the operators
\begin{alignat*}{2}
  T_1&\!:=\! T(\I)\!=\!\cB^*F\cB\!+\!\I \cB, & T_1^*&= T(-\I)\!=\!\cB^*F\cB\!-\!\I \cB,\\
  T_2&\!:=\! \I T(\I)^* \!=\! \cB+\I\cB^*F\cB, & T_2^*& = -\I T(\I) = \cB-\I\cB^*F\cB,   \\
  T_3&\!:=\!T(1\!+\!\I)= \cB^*F\cB\!+\!\cB + \I \cB,\ \ &  T_3^*&=\! T(1\!-\!\I) \!=\! \cB^*F\cB\!+\!\cB - \I \cB, \\
  T_4&\!:=\! (1\!+\!\I) T(\I)^* = \cB^*F\cB\!+\!\cB + \I (\cB^*F\cB\!-\!\cB),\ \ &  T_4^*&=\! (1\!-\!\I) T(\I) \!=\! \cB^*F\cB\!+\!\cB - \I (\cB^*F\cB\!-\!\cB),
\end{alignat*}
all defined on  $\dom T_i = \dom T_i^*= \dom \cB^*F\cB$, $i\!=\!1,2,3,4$, satisfy
\begin{itemize}[leftmargin=6mm]
\item[{\rm i)}] $T_1$, $T_2$ are $m$-accretive, $m$-dissipative and, if $\cB$ is positive definite, $0\!\in\! \rho(T_1)$, $0\!\in\! \rho(T_2)$;
\item[{\rm ii)}] $T_3$, $T_4$ are $m$-sectorial with semi-angle $\frac \pi 4$ and, if $\cB$ is positive~definite,
$0\!\in\! \rho(T_3)$,~$0\!\in\! \rho(T_4)$,
\end{itemize}
and $\dom T_i^{1/2}\! =\! \dom T_i^{*1/2} \!=\! \dom \cB$, $i\!=\!1,2,3$.
Besides, if $\,T_2$ is sectorial, then $\cB$ is bounded.
\end{corollary}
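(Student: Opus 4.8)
The plan is to read off every assertion from Theorem \ref{rav2} by evaluating the family $T(\mu)$ at the two arguments $\mu=\I$ and $\mu=1+\I$, and then transport the results to the remaining operators by bookkeeping of scalar rotations, since $T_1=T(\I)$, $T_3=T(1+\I)$, while $T_2=\I T(\I)^*$ and $T_4=(1+\I)T(\I)^*$. Throughout I would use that $\RE(T(\I)f,f)=(\cB^*F\cB f,f)\ge 0$ and $\IM(T(\I)f,f)=(\cB f,f)\ge 0$, so $W(T(\I))$ lies in the closed first quadrant, together with the elementary identities $\rho(cC)=c\,\rho(C)$ for $c\ne 0$ and $\rho(C^*)=\rho(C)^*$, and the fact that $W(C^*)=W(C)^*$.

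For i), $T_1=T(\I)$ is $m$-accretive by Theorem \ref{rav2} ii) and $m$-dissipative by Theorem \ref{rav2} iv) (with $\alpha=\pi/2$), and if $\cB$ is positive definite then $0\in\rho(T(\I))$ by Theorem \ref{rav2} iii). For $T_2$ I would argue by duality: since $T(\I)$ is $m$-dissipative, $T_2^*=-\I T(\I)$ is $m$-accretive, hence so is $T_2=(T_2^*)^*$; and since $T(\I)$ is $m$-accretive, $-\I T_2=T(\I)^*$ is $m$-accretive, i.e.\ $T_2$ is $m$-dissipative. The claim $0\in\rho(T_2)$ follows from $0\in\rho(T(\I))\Rightarrow 0\in\rho(T(\I)^*)\Rightarrow 0\in\rho(\I T(\I)^*)$. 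For ii), $T_3=T(1+\I)$ with $1+\I=\sqrt2\,\e^{\I\pi/4}$ is $m$-sectorial with semi-angle $\pi/4$ and, for positive definite $\cB$, satisfies $0\in\rho(T_3)$, both directly from Theorem \ref{rav2} iii), iv). For $T_4=(1+\I)T(\I)^*$ I would verify $m$-sectoriality from the definition as $\pi/4$-sectorial plus $m$-accretive: the numerical range satisfies $W(T_4)=\sqrt2\,\e^{\I\pi/4}W(T(\I)^*)\subset\overline{\cS(\pi/4)}$ because $W(T(\I)^*)=W(T(\I))^*$ lies in the closed fourth quadrant, and $T_4$ is $m$-accretive because $\rho(T_4)=(1+\I)\rho(T(\I)^*)\supset(1+\I)\dC_-$ contains points with negative real part (e.g.\ $-1-\I=(1+\I)(-1)$), so condition (ii) for $m$-accretivity applies to the accretive, densely defined, closed operator $T_4$. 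Again $0\in\rho(T_4)$ when $\cB$ is positive definite follows from $0\in\rho(T(\I))$.

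The square root domains for $i=1,3$ are part of Theorem \ref{rav2} ii), iii). For $i=2$, since $T_2=\I T(\I)^*$ and $T_2^*=-\I T(\I)$ are unit-modulus rotations of the $m$-accretive operators $T(\I)^*$ and $T(\I)$, I would invoke the rotation property of fractional powers of $m$-accretive operators, $(\e^{\I\theta}C)^{1/2}=\e^{\I\theta/2}C^{1/2}$, to get $\dom T_2^{1/2}=\dom T(\I)^{*1/2}$ and $\dom T_2^{*1/2}=\dom T(\I)^{1/2}$, both equal to $\dom\cB$ by Theorem \ref{rav2} ii). I expect this to be the most delicate point: the rotation angle $\theta=\pi/2$ sits exactly at the boundary of accretivity, since the numerical ranges of $T(\I)$ and $T(\I)^*$ reach the imaginary axis, so one must justify multiplicativity of fractional powers in the half-plane (angle $\pi/2$) case rather than the strictly sectorial case.

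Finally, for the last claim I would use a direct numerical-range estimate. If $T_2$ is sectorial with semi-angle $\alpha<\pi/2$, then for $f\in\dom\cB^*F\cB$, with $\RE(T_2f,f)=(\cB f,f)$ and $\IM(T_2f,f)=\|F^{1/2}\cB f\|^2$,
\[
  \|F^{1/2}\cB f\|^2\le\tan\alpha\,(\cB f,f)\le\tan\alpha\,\|\cB f\|\,\|f\|.
\]
Since $F$ is positive definite, $\|F^{1/2}\cB f\|^2\ge m_F\|\cB f\|^2$ with some $m_F>0$, whence $\|\cB f\|\le(\tan\alpha/m_F)\|f\|$. As $\dom\cB^*F\cB$ is a core of $\cB$ (established in the proof of Theorem \ref{rav2}), this bound extends to all of $\dom\cB$ and shows that $\cB$ is bounded.
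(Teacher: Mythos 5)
Your proposal is correct and follows essentially the same route as the paper: the paper's entire proof of everything except the final claim is the single remark that the assertions ``follow from the properties of $T(\I)$ and $T(1+\I)$ in Theorem \ref{rav2}\,iii), iv) and from the definitions of $T_i$'', and its proof of the final claim is exactly your computation $\|F^{1/2}\cB f\|^2\le\tan\alpha\,(\cB f,f)\le\tan\alpha\,\|\cB f\|\,\|f\|$ combined with the positive definiteness of $F$ and the core property of $\dom\cB^*F\cB$ (which the paper leaves implicit and you make explicit). Your bookkeeping for $T_2$ and $T_4$ --- adjoints, scalar rotations of numerical ranges and resolvent sets, and condition (ii) for $m$-accretivity --- is sound and simply fills in what the paper compresses into one sentence.

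The one step you flag as delicate, $\dom T_2^{1/2}=\dom T_2^{*1/2}=\dom\cB$, is indeed the only point requiring an argument beyond Theorem \ref{rav2}, and the paper glosses over it as well; but you should not try to justify the rotation identity at the boundary angle $\theta=\pi/2$ for general $m$-accretive operators, since for such operators $\e^{\I\pi/2}C$ need not even have well-defined fractional powers. Instead, use the quadrant structure you already noted: since $W(T(\I)^*)\subset\{z\in\dC:\RE z\ge 0,\ \IM z\le 0\}$, the operator $C:=\e^{\I\pi/4}\,T(\I)^*$ is $m$-sectorial with semi-angle $\pi/4$, and $T_2=\e^{\I\pi/4}C$, $T(\I)^*=\e^{-\I\pi/4}C$ are rotations of $C$ by only $\pi/4$, well inside the admissible range. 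The operators $\e^{\pm\I\pi/8}C^{1/2}$ are $m$-accretive (their numerical ranges lie in $\{z:0\le\arg z\le\pi/4\}$ and $\{z:-\pi/4\le\arg z\le 0\}$, respectively, and their resolvent sets contain points in the open left half-plane) with squares $T_2$ and $T(\I)^*$, so the uniqueness of $m$-accretive square roots of $m$-accretive operators gives $T_2^{1/2}=\e^{\I\pi/8}C^{1/2}$ and $\big(T(\I)^*\big)^{1/2}=\e^{-\I\pi/8}C^{1/2}$, whence $\dom T_2^{1/2}=\dom C^{1/2}=\dom T(\I)^{*1/2}=\dom\cB$ by Theorem \ref{rav2}\,ii); the same argument applied to $C^*=\e^{-\I\pi/4}\,T(\I)$ yields $\dom T_2^{*1/2}=\dom T(\I)^{1/2}=\dom\cB$.
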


\begin{proof}
All claims except for the last one follow from the properties of the operators $T(\I)$ and $T(1\!+\!\I)$ in Theorem~\ref{rav2} iii) and iv) and from the definitions of $T_i$, $i=1, 2,3,4$. To prove the last claim, let $T_2$ be sectorial. Then, for some $\gamma >0$,
\[
   \left(F\cB f,\cB f\right)\le\gamma(\cB f,f),\quad f\in\dom\cB^*F\cB,
\]
and hence, since $F$ is positive definite, for some $\beta>0$,
\[
   \|\cB f\|^2\le \beta\left(F\cB f,\cB f\right)\le\beta\gamma(Bf,f)\le\beta\gamma \|\cB f\| \,\|f\|,\quad f\in\dom\cB^*F\cB,
\]
which implies that $\cB$ is bounded.
\end{proof}

\begin{open}
\label{realpart}
If $\cA$ is an $m$-sectorial operator such that $\dom \cA=\dom \cA^*$, is it true that $\RE \cA= \frac 12 (\cA+\cA^*)$ is a selfadjoint operator?
\end{open}

In general, it is only clear that the Friedrichs extension of $\RE \cA$ of an $m$-sectorial operator $\cA$ coincides
with the selfadjoint operator $\cA_R$ called real part of $\cA$ in \cite[Sect.\ VI.3.1]{MR1335452}; the latter is defined as the non-negative selfadjoint operator associated with the real part ${\rm Re}\, {\mathfrak a} = \frac 12({\mathfrak a} + {\mathfrak a}^*)$ of the form ${\mathfrak a}$ corresponding to $\cA$.

The operators $T_i$, $i=1,2,3,4$, in Corollary \ref{rav2cor} show that for merely $m$-accretive $\cA$, the answer is negative. In fact,
the real part of the $m$-accretive operator $T_1 = \cB^*F\cB\!+\!\I \cB$ 
is non-negative selfadjoint, {whereas}
the real part of the $m$-accretive operator  $T_2=\cB+\I\cB^*F\cB$ 
is the non-closed non-negative densely defined symmetric operator $\cB\uphar\dom\cB^*F\cB$,
and if $\cB$ is unbounded non-negative with $\ker \cB=\{0\}$, but the inverse of $\cB$ is not bounded, then $T_1$, $T_2$ are not sectorial by Theorem \ref{rav2} and Corollary \ref{rav2cor}.
On the other hand,
the $m$-sectorial operators $T_3$, $T_4$ do have
non-negative selfadjoint real parts $\cB^*F\cB\!+\!\cB$.

\smallskip

The following proposition allows us to construct $m$-accretive and $m$-sectorial operators~$T$ with $\dom T\!=\!\dom T^*$ as in Theorem \ref{rav2} and Corollary \ref{rav2cor} which even have compact resolvent.

\begin{proposition}
\label{compres1}
Let $\cB$ be a closed densely defined symmetric operator in $\sH$ with bounded inverse
admitting a selfadjoint extension $\wt \cB$ in $\sH$ with compact inverse $\wt\cB^{-1} {\in \bB(\cH)}$.
Then,  for every bounded positive definite selfadjoint operator $F$ in $\sH$, the operator $(\cB^*F\cB)^{-1}$
{defined on $\dom (\cB^*F\cB)^{-1} \!\!=\! \ran \cB^*F\cB$} is compact.
\end{proposition}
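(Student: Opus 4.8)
The plan is to produce an explicit factorization $(\cB^*F\cB)^{-1} = \wt\cB^{-1}R$ with $R$ bounded, so that compactness follows immediately from the compactness of $\wt\cB^{-1}$. Recall first that, as shown in the proof of Theorem~\ref{rav2}, the operator $\cB^*F\cB = (F^{1/2}\cB)^*F^{1/2}\cB$ is non-negative selfadjoint and $\dom(\cB^*F\cB)\subseteq\dom\cB$ by definition. Since $F$ is positive definite there is $m>0$ with $F\ge mI$, and since $\cB$ has bounded inverse there is $c>0$ with $\|\cB f\|\ge c\|f\|$ for $f\in\dom\cB$; together these give $(\cB^*F\cB f,f)=(F\cB f,\cB f)\ge mc^2\|f\|^2$, so $\cB^*F\cB$ is positive definite, $0\in\rho(\cB^*F\cB)$, and $(\cB^*F\cB)^{-1}\in\bB(\sH)$ maps $\sH$ into $\dom(\cB^*F\cB)\subseteq\dom\cB$.

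Next I would show that $R:=\cB\,(\cB^*F\cB)^{-1}$, a priori only a linear map $\sH\to\sH$, is bounded. For $g\in\sH$ set $f:=(\cB^*F\cB)^{-1}g\in\dom\cB$, so that $\cB^*F\cB f=g$. The energy identity $(g,f)=(\cB^*F\cB f,f)=(F\cB f,\cB f)$ together with $F\ge mI$ yields $m\|\cB f\|^2\le (g,f)\le\|g\|\,\|f\|$, while positive definiteness gives $\|f\|\le (mc^2)^{-1}\|g\|$. Combining the two estimates produces $\|\cB f\|^2\le (m^2c^2)^{-1}\|g\|^2$, hence $\|Rg\|=\|\cB f\|\le (mc)^{-1}\|g\|$ and $R\in\bB(\sH)$.

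The key step is then to replace the only densely defined inverse of $\cB$ by the globally defined compact operator $\wt\cB^{-1}$. Since $\cB\subseteq\wt\cB$, we have $\cB f=\wt\cB f$ for $f=(\cB^*F\cB)^{-1}g\in\dom\cB$, whence $\wt\cB^{-1}Rg=\wt\cB^{-1}\wt\cB f=f=(\cB^*F\cB)^{-1}g$; that is,
\[
  (\cB^*F\cB)^{-1}=\wt\cB^{-1}\,R .
\]
By hypothesis $\wt\cB^{-1}\in\bB(\sH)$ is compact and $R\in\bB(\sH)$, so the right-hand side is compact, which proves the claim.

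The main point requiring care is the boundedness of $R$, i.e.\ the energy estimate controlling $\|\cB f\|$ by $\|g\|=\|\cB^*F\cB f\|$; this is where both the bounded invertibility of $\cB$ and the positive definiteness of $F$ genuinely enter. Once $R$ is bounded, the passage from $\cB^{-1}$ to the compact $\wt\cB^{-1}$ through the inclusion $\cB\subseteq\wt\cB$ is immediate and is exactly what transfers compactness from $\wt\cB$ to $\cB^*F\cB$. (Alternatively one may argue via form domains: the form norm of $\cB^*F\cB$ is equivalent to the graph norm of $\cB$, which coincides on $\dom\cB$ with the graph norm of $\wt\cB$, so the embedding $\dom\cB\hookrightarrow\sH$ factors through the compact embedding $\dom\wt\cB\hookrightarrow\sH$ and is therefore compact, giving compact resolvent of $\cB^*F\cB$.)
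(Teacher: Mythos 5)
Your proof is correct, and it takes a genuinely different route from the one in the paper. The paper introduces the second operator $\wt C:=\wt\cB F\wt\cB$ and compares it with $C:=\cB^*F\cB$ at the level of square roots: from the form identities $\|C^{1/2}f\|=\|F^{1/2}\cB f\|$ on $\dom\cB$ and $\|\wt C^{1/2}\wt f\|=\|F^{1/2}\wt\cB\wt f\|$ on $\dom\wt\cB$, together with $\cB\subset\wt\cB$, it concludes that $V:=\wt C^{1/2}C^{-1/2}$ is an isometry, factorizes $C^{-1/2}=\wt C^{-1/2}V$, and transfers compactness from $\wt C^{-1}=\wt\cB^{-1}F^{-1}\wt\cB^{-1}$ to $C^{-1/2}$ and then to $C^{-1}$ (via Kato's Thm.~V.3.49). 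You never form $\wt C$ or any square root: your energy estimate $m\|\cB f\|^2\le(g,f)$ combined with coercivity gives the explicit bound $\|\cB(\cB^*F\cB)^{-1}g\|\le(mc)^{-1}\|g\|$, and the extension property $\cB\subseteq\wt\cB$ enters exactly once, to justify the factorization $(\cB^*F\cB)^{-1}=\wt\cB^{-1}R$ with $R:=\cB(\cB^*F\cB)^{-1}$ bounded, whence compactness is immediate. Your route is more elementary and quantitative (explicit constants, no functional calculus and no appeal to the square-root compactness lemma); the paper's route --- essentially the form-norm comparison you sketch in your final parenthesis --- yields the slightly stronger by-product that already $(\cB^*F\cB)^{-1/2}$ is compact. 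Your preliminary observations (selfadjointness of $\cB^*F\cB$ via von Neumann's theorem, and positive definiteness giving $0\in\rho(\cB^*F\cB)$, so that the factorization indeed holds on all of $\sH$) are correct and supply exactly what the argument needs.
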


\begin{proof}
The operators $C:=\cB^*F\cB$ and $\wt C:=\wt \cB F\wt \cB$ are both selfadjoint and positive definite by \cite[Thm.\ V.3.24]{MR1335452}.
Then, by the assumptions on $\wt \cB$ and $F$, the inverse
\[
  \wt C^{-1}=\wt \cB^{-1}F^{-1}\wt \cB^{-1} {\in \bB(\cH)}
\]
is compact and, by \cite[Chapt.\ VI, (2.22)]{MR1335452},
\begin{align*}
   \|\wt C^{1/2}\wt f\|^2&= \|F^{1/2}\wt \cB\wt f\|^2,\quad \wt f\in \dom \wt C^{1/2}=\dom\wt \cB,\\
   \| C^{1/2}f\|^2 &= \|F^{1/2}\cB f\|^2,\quad f\in\dom C^{1/2}=\dom\cB,
\vspace{-2mm}
\end{align*}
which implies $\|\wt C^{1/2} f\|\!=\!\|C^{1/2}f\|$, $f\!\in\! \dom C^{1/2}$.
Since $\dom \cB\!\subset\!\dom\wt \cB$, we have $\ran C^{-1/2} = $ ${\dom C^{1/2} \!\subset\! \dom \wt C^{1/2}}$ and hence
$
\| \wt C^{1/2} C^{-1/2} h \|\!=\!\|h\|$, $h\!\in\! {\dom C^{1/2}}
$.
It follows that $V\!:=\!\wt C^{1/2} C^{-1/2}$ is an isometry on $\dom V \!:=\! \dom C^{-1/2}$.
Since $C^{-1/2}\!=\!\wt C^{-1/2}V$ and $\wt C^{-1/2}$ is compact,
$C^{-1/2}$ is compact and hence so is $C^{-1}\!=\!\left(\cB^*F\cB\right)^{-1}$ by \cite[Thm.~V.3.49]{MR1335452}.
\end{proof}

\begin{corollary}\label{compres11}
Under the assumptions of Proposition {\rm \ref{compres1}}, the operator {$T_1=\cB^*F\cB+\I\cB$, $\dom T_1\!=\!\dom\cB^*F\cB$, in Corollary {\rm \ref{rav2cor}}}
is $m$-sectorial and has compact inverse. If, in addition, $\cB$ is positive definite, then the same holds for all operators $T_i$, $i\!=\!2,3,4$, in Corollary~{\rm\ref{rav2cor}}.
\end{corollary}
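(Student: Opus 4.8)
The plan is to reduce all four operators to the single positive definite self-adjoint operator $C:=\cB^*F\cB$, whose inverse square root $C^{-1/2}$ is compact: the proof of Proposition~\ref{compres1} establishes exactly this (before passing to $C^{-1}$). From that proof I also take the two facts $\dom C^{1/2}=\dom\cB$ and $\|C^{1/2}f\|=\|F^{1/2}\cB f\|$ for $f\in\dom\cB$; since $F$ is bounded and positive definite, the latter yields constants $c_1,c_2>0$ with $c_1\|\cB f\|\le\|C^{1/2}f\|\le c_2\|\cB f\|$, $f\in\dom\cB$. The central object is the sandwiched operator
\[
   K:=C^{-1/2}\,\cB\,C^{-1/2},\qquad \dom K=\sH.
\]

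First I would check that $K\in\bB(\sH)$ and $K=K^*$. For boundedness, given $h\in\sH$ set $g:=C^{-1/2}h\in\ran C^{-1/2}=\dom C^{1/2}=\dom\cB$; then
\[
   \|Kh\|=\|C^{-1/2}\cB g\|\le\|C^{-1/2}\|\,\|\cB g\|\le c_1^{-1}\|C^{-1/2}\|\,\|C^{1/2}g\|=c_1^{-1}\|C^{-1/2}\|\,\|h\|.
\]
For self-adjointness, using that $\cB$ is symmetric and $C^{-1/2}$ is bounded and self-adjoint, one obtains $(Ku,v)=(\cB C^{-1/2}u,C^{-1/2}v)=(C^{-1/2}u,\cB C^{-1/2}v)=(u,Kv)$ for all $u,v\in\sH$, so the bounded, everywhere defined operator $K$ is self-adjoint. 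Since $\cB=C^{1/2}KC^{1/2}$ and $C=C^{1/2}C^{1/2}$ on $\dom C=\dom T_i$, a direct computation then yields the factorizations
\[
   T_1=C^{1/2}(I+\I K)C^{1/2},\quad T_1^*=C^{1/2}(I-\I K)C^{1/2},\quad T_3=C^{1/2}\big(I+(1+\I)K\big)C^{1/2},
\]
while $T_2=\I T_1^*$ and $T_4=(1+\I)T_1^*$ directly from their definitions in Corollary~\ref{rav2cor}.

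Because $K=K^*$ has real spectrum, $\sigma(K)\subset\dR$, every middle factor is boundedly invertible: the values of $\lambda$ that would make $I+\I K$, $I-\I K$, or $I+(1+\I)K$ singular are $\lambda=\I$, $\lambda=-\I$, and $\lambda=(-1+\I)/2$ respectively, none of which is real and hence none lies in $\sigma(K)$. Consequently each $T_i=C^{1/2}M_iC^{1/2}$ is boundedly invertible with
\[
   T_i^{-1}=C^{-1/2}M_i^{-1}C^{-1/2}\ (i=1,3),\qquad T_2^{-1}=-\I\,(T_1^*)^{-1},\qquad T_4^{-1}=(1+\I)^{-1}(T_1^*)^{-1},
\]
and, as $C^{-1/2}$ is compact while $M_i^{-1}$ is bounded, each $T_i^{-1}$ is compact. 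The accretivity and sectoriality assertions are inherited from the earlier results: $T_1$ is $m$-sectorial by Theorem~\ref{rav2}\,ii) (using that $\cB$ has bounded inverse), and under the extra hypothesis that $\cB$ is positive definite Corollary~\ref{rav2cor} gives $0\in\rho(T_i)$ together with the sectorial/accretive type of each $T_i$, $i=2,3,4$; the only genuinely new point is the compactness of the inverse.

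I expect the sole real obstacle to be the rigorous treatment of $K=C^{-1/2}\cB C^{-1/2}$: one must justify that the unbounded symmetric $\cB$, sandwiched between the two compact factors $C^{-1/2}$, extends to a bounded self-adjoint operator on all of $\sH$. This rests entirely on the domain identity $\dom C^{1/2}=\dom\cB$ and the norm equivalence $c_1\|\cB f\|\le\|C^{1/2}f\|\le c_2\|\cB f\|$ supplied by Proposition~\ref{compres1}; once $K$ is in hand, the factorizations and the compactness of the $T_i^{-1}$ are routine bounded-operator manipulations.
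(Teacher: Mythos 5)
Your proof is correct and takes essentially the same route as the paper: the paper's own proof rests on exactly the factorization $T_1^{-1}=(\cB^*F\cB)^{-1/2}\bigl(I+\I\,(\cB^*F\cB)^{-1/2}\cB(\cB^*F\cB)^{-1/2}\bigr)^{-1}(\cB^*F\cB)^{-1/2}$ together with the compactness of $(\cB^*F\cB)^{-1/2}$ from Proposition \ref{compres1}, handling $T_2,T_3,T_4$ ``similarly'' via Corollary \ref{rav2cor}. Your additions (boundedness and self-adjointness of the sandwiched operator $K$, hence bounded invertibility of the middle factors, and the reduction $T_2=\I T_1^*$, $T_4=(1+\I)T_1^*$) merely spell out details the paper leaves implicit.
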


\begin{proof}
By Theorem \ref{rav2} ii) the operator $T_1$ is $m$-sectorial and $0\in \rho(T_1)$. \vspace{-1mm} Since
\[
   T_1^{-1}
	=\left(\cB^*F\cB^*\right)^{-1/2}\left(I+\I\left(\cB^* F\cB\right)^{-1/2}\cB\left(\cB^*F\cB\right)^{-1/2}\right)^{-1}\left(\cB^*F\cB^*\right)^{-1/2}
\vspace{-1mm}
\]
and  $\left(\cB^*F\cB^*\right)^{-1/2}$ is compact,  $T_1^{-1}$ is compact as well. The proof for $T_i$, $i=2,3,4$, is similar if we use Corollary \ref{rav2cor}.
\end{proof}

In the following we present some concrete examples of differential operators to illustrate Theorem \ref{rav2} and Corollary \ref{rav2cor}.

\begin{example}
\emph{
For a subinterval $I\subset \dR$, we denote by $W^k_2(I)$, $k\in \dN$, the Sobolev spaces of order $k$
corresponding to $L_2(I)$, and we abbreviate the first order derivative by $\D:= \frac{\rm d}{{\rm d} x}$. }

\smallskip

\emph{ 1) In $\sH=L_2(0,1)$ we consider the operators
\[
   \cB:=-\I\,\D, \quad \dom \cB=\{f\in W^1_2(0,1):f(0)=f(1)=0\},
\]
and, for an arbitrary function $p \in L_\infty(0,1)$ with ess\,inf $p>0$, the multiplication operator
\[
(F_{p}h)(x)=p(x)h(x),\quad  h\in L_2(0,1), \ x \in (0,1).
\]
Then the operator $T_1=\cB^*F_{p}\cB+\I\cB$, $\dom T_1\!=\!\dom\cB^*F_p\cB$, is given by
\[
   T_1=-\D p\D +\D ,\quad \left\{f\in W^1_2(0,1) : f(0)=f(1)=0,\, (pf')'\in L_2(0,1)\right\}.
\]
Since $\cB$ has bounded inverse, but is not non-negative, Theorem \ref{rav2} ii) and Corollary \ref{rav2cor} i) show that
$T_1$ is $m$-sectorial with adjoint
\[
   T_1^*=-\D p\D -\D ,\quad \dom T_1=\dom T_1^*,
\vspace{-1mm}
\]
and
\[
   \dom T_1^{1/2}=\dom T_1^{*1/2} = \{f\in W^1_2(0,1):f(0)=f(1)=0\}.
\]
Clearly, $T_1$ has compact resolvent.}

\smallskip

\emph{
2) In $\sH=L_2(0,\infty)$ we consider the operators
\[
   \cB:=-\D^2,\quad \dom \cB= \{f\in W^2_2(0,\infty): f(0)=f'(0)=0\},
\]
and, for an arbitrary function $p \in L_\infty(0,\infty)$ with ess\,inf $p>0$, the multiplication operator
\[
    (F_{p}h)(x)=p(x)h(x),\quad  h\in L_2(0,\infty), \ x\in (0,\infty).
\]
Then the operator $T_1=\cB^*F_{p}\cB+\I\cB$, $\dom T_1\!=\!\dom\cB^*F_p\cB$, is given by
\[
   T_1=\D^2 p \D^2 -\I\D^2, \quad  \dom T_1 =\left\{ f\in W^2_2(0,\infty):f(0)=f'(0)=0, \,\D^2 p \D^2 \in L_2(0,\infty)\right\}.
\]
Since $\cB$ is non-negative with $\sigma(\cB)=[0,\infty)$, Theorem \ref{rav2} ii) and Corollary \ref{rav2cor} i) show that
$T_1$ is $m$-accretive and $m$-dissipative with adjoint
\[
   T_1^*=\D^2 p \D^2 +\I\D^2, \quad \dom T_1^*=\dom T_1,
\]
and
\[
    \dom T_1^{1/2}=\dom T_1^{*1/2}=\{f\in W^2_2(0,\infty):f(0)=f'(0)=0\}.
\]
Moreover, by Proposition \ref{rav1}, $T_1-aI$ is $m$-sectorial for every $a\in (-\infty,0)$.
}

\end{example}

\begin{remark}
In \cite{MR1155715} an example of a hyponormal unbounded $m$-sectorial operator $T$ with $\dom T=\dom T^*$ was constructed, which is given by a tri-diagonal semi-infinite matrix in $l_2(\dN)$; note that, by definition, a hyponormal operator $T$ satisfies $\dom T\cap \dom T^* = \dom T$.
\end{remark}

We conclude this section with some known results on the coincidence of domains of an operator and its adjoint when both are extensions of
a closed non-densely defined symmetric operator $\dot A$ in a Hilbert space $\sH$.

A closed densely defined operator $T$  in $\sH$ is called  \emph{quasi-selfadjoint extension} of $\dot A$  if $T\supset \dot A$ and $ T^*\supset \dot A$. If $\dot A$ is densely defined, then $T$ is a quasi-selfadjoint extension of $\dot A$ if and only if $T\subset \dot A^*$; in this case the equality $\dom T\!=\!\dom T^*$ yields that $T$ is a selfadjoint extension of $\dot A$.

Vice versa, if $T$ is a closed densely defined non-selfadjoint operator in $\sH$ and the symmetric operator $\dot A$ given \vspace{-1mm}by
\begin{equation}
\label{oppa}
   \dom \dot A:=\{f\in\dom T\cap\dom T^*: Tf=T^*f\}, \quad {\dot A := T \uphar \dom \dot A},
\vspace{-1mm}
\end{equation}
satisfies $\dom \dot A\!\ne\!\{0\}$, then $\dot A$ is a closed symmetric operator \cite{MR2828331}, 
\cite{MR1383821} and $T$ is a quasi-selfadjoint extension of $\dot A.$ Besides, if $\dom\dot A$ is dense, then $\dom \dot A$ is neither a core of $T$ nor of~$T^*$.

In \cite[Chapt.\ 1, Thm.\ 2.6]{MR1383821} certain properties of the operator $\dot A$ defined by \eqref{oppa} were established
in the case $\rho(T)\ne\emptyset$ and $\dom T=\dom T^*$. Moreover,
in \cite[Chapt.\ 3, Thm.\ 2.1]{MR1383821} criteria for the equality $\dom T=\dom T^*$ were derived in terms of characteristic functions
for quasi-selfadjoint extensions $T$ of a non-densely defined symmetric operator with finite deficiency indices.
Further, in \cite{MR2828331} special classes of closed non-densely defined symmetric operators, called \emph{regular $O$-operators}, were
studied, and the domain equality $\dom T=\dom T^*$ was proved for a class of quasi-selfadjoint extensions $T$ of a regular $O$-operator in
\cite[Thm.~4.1.12]{MR2828331}.
Finally, in \cite[Thm.\ 2]{MR715553}, using a factorization of the perturbation determinant, a criterion for the
equality $\dom T=\dom T^*$ for a class of $m$-dissipative operators with nuclear imaginary part was established.


\section{Stability results for $\dom \cA \cap \dom \cA^*$: Accretive and non-accretive operators}
\label{sec:additive-perturb}

In this section we provide stability results for the domain intersections $\dom \cA \cap \dom \cA^*$ of an $m$-sectorial operator $\cA$ in a Hilbert space $\sH$.
More precisely, we construct operators $\wh T$  in a possibly larger Hilbert space $\wh \sH$ such that $\dom \wh T \cap \dom \wh T^*$ has the same properties as $\dom \cA \cap \dom \cA^*$,
$\rho(\wh T)\ne \emptyset$ and either $\wh T$ is $m$-accretive but not sectorial or $\wh T$ is not even accretive, i.e.\ $T$ has full numerical range $W(\wh T) = \dC$.

\begin{proposition}
\label{addapert}
Let $A$ be an unbounded $m$-sectorial operator and $B$ a bounded selfadjoint operator
in the Hilbert \vspace{1mm} space~$\sH$. Then $T\!:=\!A\!+\!\I B$
satisfies $\dom T \cap \dom T^* = \dom A \cap \dom A^*$ and  has the following properties.
\begin{enumerate}
\item[{\rm i)}]
If $\,\ran A\!=\!H$ $($and hence $\ker A = \{0\})$,
then $T$ is $m$-\vspace{1mm} sectorial in~$\sH$;
\item[{\rm ii)}]
If $\,\ker A\!=\!\{0\}$ \vspace{-2mm} and
\begin{equation}
\label{ceghtv}
 \sup\limits_{u\in\dom A {\setminus \{0\}}} \cfrac{{|}(Bu,u){|}}{\RE (Au,u)}=\infty
\end{equation}
$(${and hence} $\ran A\!\ne\! \sH)$, then
$T$ is $m$-accretive, but not sectorial, i.e.\ there is no sector $\overline{\cS(\alpha)}=\left\{z\in\dC:|\arg z |\le \alpha \right\}$
with $\alpha\in(0,\pi/2)$ that contains $W(T)$, and the same holds for $T^*\!$.
Moreover, the $m$-accretive operators in $\sH\oplus \sH$ given~by
\begin{alignat}{2}
\label{chaitat1}
   \wh T&:=\begin{bmatrix}A\!+\!\I B\!&0\cr 0& \!A^*\!-\!\I B \end{bmatrix},
   	& \quad \dom \wh T&=\dom A\oplus\dom A^*\!\!,\\
\label{chaitat2}
   \wt T&:=\begin{bmatrix}A&\I B\cr \I B&A\end{bmatrix},
	 	& \quad \dom\wt T&=\dom A\oplus\dom A,
\end{alignat}
satisfy
$\dom \wh T \cap \dom \wh T^* \!=\! (\dom A \cap \dom A^*) \!\oplus\!  (\dom A \cap \dom A^*) \!=\! \dom \wt T \cap \dom \wt T^*\!$,
and neither $W(\wh T)$ nor $W(\wt T)$ are contained in any sector $\cS$ of opening angle $<\pi$ in the closed right half-plane.
\end{enumerate}
\end{proposition}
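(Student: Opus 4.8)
The plan is to dispatch the domain identity and parts (i), (ii) first, and then the two operator matrices. Since $B$ is bounded and selfadjoint, $\dom T=\dom A$ and $T^*=A^*-\I B$ with $\dom T^*=\dom A^*$, so $\dom T\cap\dom T^*=\dom A\cap\dom A^*$ is immediate. Because $(Bu,u)\in\dR$, I also record $\RE(Tu,u)=\RE(Au,u)$ for $u\in\dom A$ and $\RE(T^*u,u)=\RE(A^*u,u)$ for $u\in\dom A^*$, while $\IM(Tu,u)=\IM(Au,u)+(Bu,u)$.

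For (i) I would first note that an $m$-sectorial $A$ with $\ran A=\sH$ is coercive: as $\ker A=\{0\}$ and $A$ is closed, $\ran A=\sH$ forces $0\in\rho(A)$ by the closed graph theorem, and for $m$-sectorial operators this is equivalent to the existence of $m>0$ with $\RE(Au,u)\ge m\|u\|^2$ on $\dom A$ (see \cite[Sect.~VI]{MR1335452}). Combining $\|u\|^2\le m^{-1}\RE(Au,u)$, the bound $|(Bu,u)|\le\|B\|\,\|u\|^2$ and the $\alpha$-sectoriality of $A$ gives $|\IM(Tu,u)|\le|\IM(Au,u)|+|(Bu,u)|\le(\tan\alpha+\|B\|/m)\,\RE(Tu,u)$, so $T$ is sectorial. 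As $T$ is closed and densely defined and $T^*=A^*-\I B$ is accretive (because $A^*$ is accretive), $T$ is $m$-accretive by the characterization in Section~\ref{sec:prelim}, hence $m$-sectorial.

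For (ii) the same argument shows $T$ and $T^*$ are $m$-accretive. If $T$ were sectorial with semi-angle $\beta<\pi/2$, then for every $u\in\dom A$ with $\RE(Au,u)>0$ I would get $|(Bu,u)|\le|\IM(Tu,u)|+|\IM(Au,u)|\le(\tan\beta+\tan\alpha)\,\RE(Au,u)$, contradicting \eqref{ceghtv}; the same computation shows \eqref{ceghtv} forces $\ran A\ne\sH$. The only subtlety is $T^*$, since \eqref{ceghtv} is stated over $\dom A$ whereas the analogue over $\dom A^*$ is needed. I would resolve this by passing to the real-part form: with $A_R$ the non-negative selfadjoint operator of $\RE\sa$ one has $\RE(Au,u)=\|A_R^{1/2}u\|^2$ on $\dom A$ and $\RE(A^*u,u)=\|A_R^{1/2}u\|^2$ on $\dom A^*$; both $\dom A$ and $\dom A^*$ are cores of $A_R^{1/2}$ (they are cores of $\sa$ and $\sa^*$, whose real parts coincide), and $u\mapsto(Bu,u)$ is continuous in the form norm, so the supremum of $|(Bu,u)|/\RE(A^*u,u)$ over $\dom A^*$ equals the supremum in \eqref{ceghtv} and is infinite. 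The same contradiction then shows $T^*$ is not sectorial. I expect this transfer between $\dom A$ and $\dom A^*$ to be the main obstacle.

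Finally, $\wh T=\mathrm{diag}(T,T^*)$ and $\wt T=\mathrm{diag}(A,A)+\I K$, with $K$ the bounded selfadjoint off-diagonal matrix with entries $B$, are closed and densely defined, and they are $m$-accretive because their adjoints are accretive (on each block the real part of the diagonal is $\ge 0$). The domain identities follow from $\dom\wh T=\dom A\oplus\dom A^*$, $\dom\wh T^*=\dom A^*\oplus\dom A$, $\dom\wt T=\dom A\oplus\dom A$, $\dom\wt T^*=\dom A^*\oplus\dom A^*$ upon intersecting. For the numerical range, both operators are $m$-accretive, so $W\subset\overline{\dC_+}$; a sector of opening $<\pi$ inside $\overline{\dC_+}$ is $\{z:\theta_1\le\arg z\le\theta_2\}$ with $[\theta_1,\theta_2]\subset[-\pi/2,\pi/2]$ and $\theta_2-\theta_1<\pi$, an interval that cannot contain values arbitrarily close to both $\pi/2$ and $-\pi/2$. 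Hence it suffices to exhibit points of argument tending to $+\pi/2$ and to $-\pi/2$. For $\wt T$ I would test with $x=(u,\sigma u)$, $\sigma=\pm1$, $u\in\dom A$, which gives $(\wt Tx,x)/\|x\|^2=(Au,u)/\|u\|^2+\sigma\I(Bu,u)/\|u\|^2$; choosing $u_n\in\dom A$ with $|(Bu_n,u_n)|/\RE(Au_n,u_n)\to\infty$ from \eqref{ceghtv}, the two signs $\sigma=\pm1$ produce points whose arguments tend to $+\pi/2$ and $-\pi/2$. For $\wh T$ I would use $W(T),W(T^*)\subset W(\wh T)$ via vectors supported on a single block, together with the form-core argument: one fixed sign of $(Bu,u)/\|A_R^{1/2}u\|^2$ is unbounded on both $\dom A$ and $\dom A^*$, so $W(T)$ approaches one of $\pm\pi/2$ through $\dom A$ and $W(T^*)$ (where $B$ enters with opposite sign) approaches the other through $\dom A^*$. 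In both cases $W$ meets neither restriction, so it lies in no sector of opening $<\pi$ in $\overline{\dC_+}$.
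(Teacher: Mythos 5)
Your proposal is correct, and at the two points where the real work lies it takes routes different from the paper's; the rest (the domain identities, $m$-accretivity via accretivity of the adjoints, and the test vectors $(u,\pm u)^{\rm t}$ for $\wt T$) coincides with what the paper does.

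In (i), the paper never passes through $0\in\rho(A)$: from Kato's representation $A=L(I+\I G)L$ with $L=A_R^{1/2}$ and $G$ bounded selfadjoint it gets $\ran A\subseteq\ran L$, so $\ran A=\sH$ forces $\ran L=\sH$, hence (as $L=L^*\ge 0$) $\ker L=\{0\}$ and $L$ is a bijection; thus $\RE(Tu,u)=\RE(Au,u)=\|Lu\|^2\ge m\|u\|^2$, and the sectoriality estimate follows as in your computation. You instead invoke, with only a vague pointer to Kato, the equivalence ``$0\in\rho(A)$ iff $A$ coercive'' for $m$-sectorial $A$. The direction you need ($0\in\rho(A)\Rightarrow$ coercivity) is true but genuinely nontrivial: the inclusion $\sigma(A)\subseteq\overline{W(A)}$ is of no use here, since for non-normal operators $0\notin\sigma(A)$ does not prevent $0\in\overline{W(A)}$, and the standard proof of this direction is exactly the representation argument just described. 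So this is the one soft spot of your proof: not wrong, but the ``known fact'' you cite is precisely what the paper proves at this point, and you should either prove it (three lines, as above) or give a precise reference.

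In (ii) and for $\wh T$, the obstacle you single out --- hypothesis \eqref{ceghtv} lives on $\dom A$, while $T^*$ lives on $\dom A^*$ --- is indeed the crux, and the two resolutions differ. The paper first proves the reflection identity $\overline{W(T)}=\overline{W(T^*)}^*$, using that $W(T+aI)$ and $W(T^*+aI)$ are dense in the numerical ranges of the forms $\mathfrak t_a$ and $\mathfrak t_a^*$ and that $W(\mathfrak t_a)=W(\mathfrak t_a^*)^*$, and then transfers both non-sectoriality and the presence of numerical-range points near the rays $\arg z=\pm\pi/2$ from $T$ to $T^*$ by reflection. Your form-core transfer ($\RE(Au,u)=\RE(A^*u,u)=\|A_R^{1/2}u\|^2$ on the respective domains, both of which are cores of $\RE{\mathfrak a}$ because the form norms of ${\mathfrak a}$, ${\mathfrak a}^*$ and $\RE{\mathfrak a}$ coincide, and $u\mapsto(Bu,u)$ is continuous in that norm) is a correct alternative built on the same representation theorems; what it buys is exactly the statement your $\wh T$ argument needs --- one fixed sign of $(Bu,u)/\|A_R^{1/2}u\|^2$ is unbounded on both $\dom A$ and $\dom A^*$ --- which the paper instead extracts from the reflection identity. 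A last remark: your explicit reading of ``sector of opening angle $<\pi$ in $\overline{\dC_+}$'' as a sector with vertex $0$ agrees with what the paper's proof implicitly requires, and it is essential, not cosmetic: all numerical-range points produced in either proof have real part tending to $0$ and imaginary part bounded by $\|B\|$, and in fact $W(\wt T)\subseteq\{z\in\overline{\dC_+}:|\IM z|\le\tan\alpha\,\RE z+\|B\|\}$, a set contained in a sector of opening $\frac{\pi}{2}+\alpha$ with vertex $\I\|B\|$; so for sectors with shifted vertex the conclusion would even be false, and only for vertex $0$ does ``arguments accumulate at both $\pm\pi/2$'' finish the proof.
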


\begin{proof}
Clearly, since $B$ is bounded, $T\!+\!aI$ is $m$-sectorial for every $a>0$. Therefore $\overline{W(T\!+\!aI)}=\overline{W(T^*\!+\!aI)}^* = \{ z\in \dC: \bz \in \overline{W(T^*\!+\!aI)}\}$
since both $W(T\!+\!aI)$ and $W(T^*\!+\!aI)$ are dense in the numerical ranges of their respective forms $W(\mathfrak t_a)$ and $W(\mathfrak t_a^*)$, respectively, and $W(\mathfrak t_a)=W(\mathfrak t_a^*)^*$
by \cite[Cor.~VI.2.3 and Thm.~VI.2.5]{MR1335452}. This shows that $\overline{W(T)}=\overline{W(T^*)}^*$.

By \cite[Thm.\ VI.3.2]{MR1335452},  there exist an unbounded selfadjoint operator $L$ and a bounded selfadjoint operator $G$ in $\sH$
such that $A=L(I+\I G)L$.
This implies that $\dom A \subset \dom L$, $\ker A= \ker L$, $\ran A \subset \ran L$ and thus, for $T:=A+\I B$, $\dom T=\dom A$,
\begin{equation}
\label{sectAiB}
   \RE(Tu,u) \!=\! \|Lu\|^2\ge 0, \quad
   \IM (Tu,u) \!=\!  (GLu,Lu) \!+\! (Bu, u),  \quad u \!\in\! \dom T,
\end{equation}
hence, in particular, $T$ is accretive.
Since $A$ is $m$-sectorial, $A^*$ is sectorial by \cite[Thm.~VI.2.5]{MR1335452} and thus also $T^*=A^*-\I B$, $\dom T^*=\dom A^*$, is accretive.
This proves that both $T$ and $T^*$ are $m$-accretive.

i) Suppose that $\ran A\!=\!\sH$. Then, since $A$ is closed, we have $\ker A\!=\!\ker A^*\!=\!(\ran A)^\perp \!=\!\{0\}$. \linebreak Further,
$\ker L \!=\! \{0\}$, $\ran L\!=\!\sH$ and so there exists $m>0$ with $\|Lu\|\!\ge\! m\|u\|$, $u\!\in\!\dom L$.~Thus
\[
   |(Bu,u)|\le \|B\|\, \|u\|^2\le{\|B\| m^{-1}} \|Lu\|^2, \quad u\in\dom L,
\]
and so for $T=A+\I B,$ $\dom T=\dom A$, \eqref{sectAiB} yields that
\[
  |\IM (Tu,u) | \le \big( \|G\| + \|B\| m^{-1}  \big) \,\RE(Tu,u), \quad u \in \dom T,
\]
and hence $T$ is $m$-sectorial.

ii)  Suppose that $\ker A = \{0\}$ and \eqref{ceghtv} holds.
Then $\overline{\ran A} =  (\ker A^*)^\perp = (\ker A)^\perp = \sH$. Further, since $B$ is bounded,
\eqref{ceghtv} necessitates that there exists a sequence  $\{u_n\}_{n\in\dN} \subset \dom A$, $\|u_n\|=1$, $\RE(A u_n,u_n)\to 0$, $n\to \infty$, and thus, since $A$
is sectorial, $(A u_n,u_n)\to 0$, $n\to \infty$. This shows that $0 \in \sigma_{\rm app}(A) \subset \sigma(A)$ and hence
$\overline{\ran A}\ne \sH$.
By \eqref{sectAiB} and noting $\ker L = \ker A = \{0\}$, we have
\[
 \cfrac{\IM (Tu,u)}{\RE(Tu,u)} = \cfrac{(GLu,Lu)}{\|Lu\|^2}+\cfrac{(Bu,u)}{\|Lu\|^2}
 =: \mu_u + t_u, \quad u \in \dom T \setminus \{0\},
\]
where $\mu_u \in [-\|G\|, \|G\|]$ since $G$ is bounded. Now \eqref{ceghtv} implies that $t_{u_n} \to \infty$ or $t_{u_n} \to -\infty$ for some sequence
$\{u_n\}_{n\in\dN} \subset \dom A \setminus\{0\}$ and thus
\[
   \sup\limits_{u\in\dom T    {\setminus \{0\}}}\cfrac{|\IM (Tu,u)|}{\RE(Tu,u)}=\infty,
\]
which proves that $T$ is not sectorial. Since $\overline{W(T)}=\overline{W(T^*)}^*$, the same is true for $T^*$.

Clearly, $\wh T$ and $\wt T$ are of the form $\wh T \!=\! \wh A \!+\!\I \wh B$, $\wt T \!=\! \wt A \!+\!\I \wt B$ with $\wh A$, $\wt A$ $m$-sectorial
and bounded selfadjoint $\wh B$, $\wt B$, and thus $\wh T$ and $\wt T$ are $m$-accretive by what was shown~above.

By ii), for every $\epsilon \in (0,\pi/2]$, we either have
$\overline{W(T)} \cap  \Sigma(\epsilon) \ne \emptyset$ or $\overline{W(T)} \cap  \Sigma(\epsilon)^* \ne \emptyset$
for the sector $\Sigma(\epsilon):=\{ z\!\in\! \dC: \pi/2\!-\!\epsilon \!<\! \arg z \!\le\! \pi/2 \}$.
Since $\overline{W(T)}\!=\!\overline{W(T^*)}^*\!$, e.g.\ in the former case, it follows that
$\overline{W(T^*)} \cap  \Sigma(\epsilon)^* \ne \emptyset$. Hence, for $\overline{W(\wh T)} = \overline{W(T)} \cup \overline{W(T^*)}$,
for every $\epsilon \in (0,\pi/2]$ we have $\overline{W(\wh T)} \cap  \Sigma(\epsilon) \ne \emptyset$ and $\overline{W(\wh T)} \cap  \Sigma(\epsilon)^* \ne \emptyset$, which proves the claim for $\wh T$.

The claim for $\wt T$ follows if we let $u_\pm = (u,\pm u)^{\rm t} \in \dom A \oplus \dom A$ with $u\in\dom A$,
$\|u\|= 1/\sqrt2$ so that $\|u_\pm\| = 1$, \vspace{-1mm} observe
\[
  \big(\wt T u_\pm,u_\pm \big) = \frac {(Au,u)}{\|u\|^2} \pm \I  \frac {(Bu,u)}{\|u\|^2} \in W(\wt T),
  \vspace{-1mm}
\]
and take into account \eqref{ceghtv} arguing as above.
\end{proof}

\begin{remark}
The operators $T$, $\wh T$ and $\wt T$ in Proposition \ref{addapert} ii) become sectorial
after a right shift, i.e.\ $T$, $\wh T\!+\!aI$ and $\wt T\!+\!a I$ with $a>0$ are boundedly invertible and $m$-sectorial.
\end{remark}

\begin{remark}
If $B$ is non-negative, then condition \eqref{ceghtv} in Proposition \ref{addapert} ii) is equivalent~to
\[
   \ran B^{1/2}\nsubseteq\ran A_R^{1/2}
\]
where $A_R\!:=\!L$ is the non-negative selfadjoint operator associated with the real part $\RE {\mathfrak a}$ of the closed sectorial form ${\mathfrak a}$ associated with $A$. Note that
$\ker A_R \!=\! \ker A \!=\! \{0\}$ was assumed in ii) and that \eqref{ceghtv} equally holds with $\dom A$ replaced by $\dom A_R^{1/2}$
since $\dom A$ is a core of ${\mathfrak a}$, and thus for $\RE {\mathfrak a}$ and~$A_R^{1/2}$.

Indeed, {\eqref{ceghtv} does not hold} if and only if the densely defined operator
$D:= B^{1/2}A_R^{-1/2}$\!, $\dom D=\ran A_R^{1/2}$, is bounded.
The latter is, in turn, equivalent to $D^*$ being bounded and everywhere defined by \cite[Satz~2.40 c)]{MR1887367}.
Since $B$ is bounded and both $B$, $A_R^{1/2}$ are selfadjoint, we have $D^* = A_R^{-1/2} B^{1/2}$ by \cite[Satz~2.43 b)]{MR1887367}
and so $D^*$ is everywhere defined if and only if $\ran B^{1/2}\subseteq\ran A_R^{1/2}$.
\end{remark}

Finally, we show that all possible phenomena for $\dom T \cap \dom T^*$ may also occur for closed operators
whose numerical range is the entire complex plane, $W(T)=\dC$, with non-empty resolvent set, or even with compact resolvent.

\begin{proposition}
\label{prprt1}
Let $T$ be a closed densely defined linear operator in $\sH$
that is not of the form $T=\eta(A+\I C)$ with a symmetric operator $A$, a bounded selfadjoint operator $C$ and $\eta \in \dC$.
Then, for all bounded operators $X$, $Y\in\bB(\sH)$, the closed linear operator $\wh T$ given by 
\[
    \wh T=\begin{bmatrix} T&X\cr Y&-T \end{bmatrix}, \quad
   \dom\wh T=\dom T\oplus\dom T,
\]
in $\sH \oplus \sH$ satisfies
\[
   \dom \wh T \cap \dom \wh T^* = (\dom T \cap \dom T^*) \oplus (\dom T \cap \dom T^*), \quad
	 W(\wh T)=\dC.
\]
Moreover, if $\,T$ has compact resolvent, then so does $\wh T$ and, for $X=0$ or $Y=0$, i.e.\ for
\[
   \wh T=\begin{bmatrix} T&X\cr 0&-T \end{bmatrix}  \quad \mbox{or } \quad
	 \wh T=\begin{bmatrix} T&0\cr Y&-T \end{bmatrix},
\]	
$\,0\!\in\!\rho(T)$ implies $0 \!\in\! \rho\big(\wh T\big)$.
\end{proposition}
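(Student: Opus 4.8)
The plan is to regard $\wh T$ as the bounded perturbation $\wh T=D+K$ of the block-diagonal operator $D$ given by $Du=(Tu_1,-Tu_2)$ for $u=(u_1,u_2)\in\dom D:=\dom T\oplus\dom T$, where $K\in\bB(\sH\oplus\sH)$ has off-diagonal entries $X$, $Y$. First I would settle the domain intersection. Since $T$ is closed and densely defined, so is $D$, with $D^*u=(T^*u_1,-T^*u_2)$ on $\dom D^*=\dom T^*\oplus\dom T^*$; because $K$ is bounded, $\wh T=D+K$ is closed and densely defined and $\wh T^*=D^*+K^*$, so that $\dom\wh T^*=\dom T^*\oplus\dom T^*$ (with off-diagonal entries $Y^*$, $X^*$). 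Intersecting the domains then gives
\[
   \dom\wh T\cap\dom\wh T^*=(\dom T\oplus\dom T)\cap(\dom T^*\oplus\dom T^*)=(\dom T\cap\dom T^*)\oplus(\dom T\cap\dom T^*).
\]

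Next I would reformulate the hypothesis geometrically: $T=\eta(A+\I C)$ for some symmetric $A$, bounded selfadjoint $C$ and $\eta\in\dC$ if and only if $\overline{W(T)}$ is contained in a strip of finite width between two parallel lines. The implication ``$\Rightarrow$'' is immediate, since then $\eta^{-1}W(T)\subset\dR+\I[-\|C\|,\|C\|]$. For ``$\Leftarrow$'', if $\RE(\overline{e}\,z)$ stays bounded on $W(T)$ for some unit $e\in\dC$, then the real part of the sesquilinear form of $\overline{e}\,T$ is a bounded symmetric form on $\dom T$; representing it by a bounded selfadjoint $C$ and setting $A:=-\I(\overline{e}\,T-C)$ up to a real shift absorbing the strip's offset, a short computation shows that $A$ is symmetric and that $T$ has the excluded form with $\eta=\I e$.

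The heart of the matter, and the main obstacle, is to show $W(\wh T)=\dC$. For unit vectors $v,w\in\dom T$ put $u=(v/\sqrt2,\,w/\sqrt2)$, so that $\|u\|=1$ and
\[
   (\wh Tu,u)=\tfrac12\big[(Tv,v)-(Tw,w)\big]+\tfrac12\big[(Xw,v)+(Yv,w)\big],
\]
where the last bracket has modulus at most $\|X\|+\|Y\|$. Already the diagonal part satisfies $W(D)\supseteq\tfrac12\big(W(T)-W(T)\big)=\dC$, since a convex set not contained in any strip has difference set all of $\dC$; the difficulty is that the bounded coupling $K$ could a priori distort $W(\wh T)$. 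To rule this out I would argue by contradiction. If $W(\wh T)\ne\dC$, then $\overline{W(\wh T)}\ne\dC$ (a convex subset of $\dC$ dense in $\dC$ must equal $\dC$), so there is a point outside $\overline{W(\wh T)}$ and Hahn--Banach separation provides a unit $e\in\dC$ and $c\in\dR$ with $\RE(\overline{e}\,z)\le c$ for all $z\in\overline{W(\wh T)}$. Inserting the vectors $u$ above and using the uniform bound on the coupling term yields
\[
   \RE\big(\overline{e}\,(Tv,v)\big)-\RE\big(\overline{e}\,(Tw,w)\big)\le 2c+\|X\|+\|Y\|\quad\text{for all unit }v,w\in\dom T,
\]
so $\RE(\overline{e}\,\cdot\,)$ is bounded on $W(T)$, i.e. $\overline{W(T)}$ lies in a strip. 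By the reformulation this would force $T$ into the excluded form, contradicting the hypothesis. Hence $W(\wh T)=\dC$.

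It remains to treat the resolvent. As $\wh T=D+K$ with $K$ bounded and $\rho(D)=\dC\setminus(\sigma(T)\cup(-\sigma(T)))\ne\emptyset$, and as $(D-\lambda I)^{-1}={\rm diag}\big((T-\lambda I)^{-1},-(T+\lambda I)^{-1}\big)$ is compact for $\lambda\in\rho(D)$ whenever $T$ has compact resolvent, the identity $(\wh T-\lambda I)^{-1}=(D-\lambda I)^{-1}-(\wh T-\lambda I)^{-1}K(D-\lambda I)^{-1}$ (valid for $\lambda\in\rho(D)$ deep enough that $\|K(D-\lambda I)^{-1}\|<1$) shows that $\wh T$ then has compact resolvent as well. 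Finally, for the triangular cases with $0\in\rho(T)$ I would exhibit explicit bounded two-sided inverses: for $Y=0$,
\[
   \wh T^{-1}=\begin{bmatrix}T^{-1}&T^{-1}XT^{-1}\\ 0&-T^{-1}\end{bmatrix},\qquad\text{and for }X=0,\quad
   \wh T^{-1}=\begin{bmatrix}T^{-1}&0\\ T^{-1}YT^{-1}&-T^{-1}\end{bmatrix},
\]
where $T^{-1}XT^{-1}$ and $T^{-1}YT^{-1}$ are bounded with range in $\dom T$; a direct multiplication confirms $\wh T\wh T^{-1}=\wh T^{-1}\wh T=I$, whence $0\in\rho(\wh T)$.
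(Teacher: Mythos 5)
Your treatment of the domain identity and of $W(\wh T)=\dC$ is correct, and at its core it is the same argument as the paper's: assume $W(\wh T)\ne\dC$, use convexity to separate $\overline{W(\wh T)}$ by a line, deduce that $W(T)$ lies in a strip, and contradict the hypothesis on $T$. Two differences are worth recording. First, the paper evaluates the quadratic form of $\wh T$ only at vectors $(v,0)$ and $(0,w)$ supported in a single component; then the coupling terms $X,Y$ drop out identically and one gets the clean inclusion $W(T)\cup W(-T)\subseteq W(\wh T)$, from which the strip follows at once. You instead use mixed vectors $(v,w)/\sqrt 2$, so the coupling term $\tfrac12\big[(Xw,v)+(Yv,w)\big]$ enters; since it is uniformly bounded it merely widens the strip, so your version works equally well, just less economically. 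Second, where the paper cites \cite[Lemma~5.1]{MR3940390} for the implication ``$W(T)$ contained in a strip $\Longrightarrow T=\eta(A+\I C)$'', you prove it yourself: the real part of the rotated form is bounded on the diagonal, hence (by polarization, which for Hermitian forms needs no semiboundedness) bounded as a sesquilinear form and represented by a bounded selfadjoint $C$, and the remainder has purely imaginary numerical range, so multiplying by $-\I$ gives a symmetric operator. This makes your proof self-contained, which is a genuine plus over the paper's citation.

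The gap is in the compact-resolvent claim for general $X,Y$. Your Neumann series needs some $\lambda\in\rho(D)$ with $\|K(D-\lambda I)^{-1}\|<1$, and for non-normal $T$ such $\lambda$ need not exist: the resolvent norm of an operator with compact resolvent need not become small anywhere on the resolvent set. Concretely, let $T=\bigoplus_n J_{k_n}(\lambda_n)$ be an orthogonal sum of Jordan blocks whose eigenvalues $\lambda_n$ run through a grid of mesh $1$ covering $\dC$ (so $|\lambda_n|\to\infty$, $\sigma(T)=\{\lambda_n\}$ is discrete, and $T$ has compact resolvent) and whose sizes $k_n$ are all at least some large $k$. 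Every $\lambda\in\rho(T)$ lies within $1/\sqrt2$ of some $\lambda_n$, whence
\[
  \|(T-\lambda I)^{-1}\|\;\ge\;|\lambda-\lambda_n|^{-k_n}\;\ge\;2^{k/2},
\]
so $\inf_{\lambda\in\rho(T)}\|(T-\lambda I)^{-1}\|$ exceeds any prescribed constant; taking $X=Y=I_\sH$, so that $K$ is unitary, gives $\|K(D-\lambda I)^{-1}\|=\|(D-\lambda I)^{-1}\|>1$ for every $\lambda\in\rho(D)$, and your argument never starts. The claim should be split: once $\rho(\wh T)\ne\emptyset$ is known, compactness follows for $\mu\in\rho(\wh T)$, $\lambda\in\rho(D)$ from the identity
\[
  (\wh T-\mu I)^{-1}=(D-\lambda I)^{-1}+(\wh T-\mu I)^{-1}\big((\mu-\lambda)I-K\big)(D-\lambda I)^{-1},
\]
which requires no smallness; the remaining issue is precisely $\rho(\wh T)\ne\emptyset$ for general bounded $X,Y$, which your proof leaves open (and which, in fairness, the paper's own one-line ``the remaining claims are immediate'' does not address either). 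In the triangular cases $X=0$ or $Y=0$ --- the only ones the paper actually uses later, in Proposition~\ref{lauss} and Corollary~\ref{newnumr} --- your explicit inverse formulas are correct and complete, and they give both $0\in\rho(\wh T)$ and compactness of the resolvent directly, so no repair is needed there.
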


\begin{proof}
Suppose that $W(\wh T)\ne \dC$. Since $W(\wh T)$ is convex, there exist $\nu$, $\mu \!\in\! \dC$ such that the numerical range of
$\nu \wh T\!+\!\mu I$ is contained in the closed upper half-plane,~i.e.,
\[
   \nu W(\wh T)+\mu\subseteq \{ z \in \dC : \IM z \ge 0\}.
\]
Since $W(T) \cup W(-T) \subseteq W(\wh T)$, it follows that, for every $z\in W(T)$,
\[
   \IM(\nu z+\mu)\ge 0,\quad
   \IM(-\nu z+\mu)\ge 0
\]
and hence
\[
   -\IM \mu\le \IM(\nu z)\le\IM\mu.
\]
This means that $W(\nu T)$, and hence $W(T)$, is contained in a strip. By \cite[Lemma~5.1]{MR3940390}, the latter implies that $T$ is of the form excluded by assumption, a contradiction.

The remaining claims are immediate since $X$ and $Y$ are bounded.
\end{proof}

\begin{remark}
\label{corso}
If $T$ were of the form $T=\eta(A+\I C)$ with symmetric $A$, bounded selfadjoint $C$ and $\eta \in \dC$, then we would have
$\dom T \subseteq \dom T^*$ and hence, in particular, in this case $\dom T \cap \dom T^* = \dom T$ is always dense and a core of $T$.
\end{remark}

By Remark \ref{corso}, in all possible cases (1) to (5) for $\dom T \cap \dom T^*$, see Introduction, the operators $T$ satisfy the assumptions of Proposition \ref{prprt1}.
Thus only the cases  (6), (7) remain to be considered where $\dom T \cap \dom T^*$ is a core or even $\dom T = \dom T^*$, see Section \ref{sec:equality}.

\begin{proposition}\label{lauss}
Let $\cB$ be a positive definite closed unbounded symmetric
operator in $\sH$, $F$ a positive definite bounded selfadjoint operator, $T_1=\cB^*F\cB+\I\cB$, $\dom T_1\!=\!\dom\cB^*F\cB$,
as in Theorem {\rm \ref{rav2}}, Corollary {\rm \ref{rav2cor}}, and $X\in \bB(\sH)$ bounded.
Then the operator
\[
   \wh T=\begin{bmatrix}T_1&0\cr X&-T_1  \end{bmatrix}, \quad {\dom \wh T = \dom T_1 \oplus \dom T_1,}
\]
in the Hilbert space $\sH\oplus \sH$ satisfies
\[
   \dom \wh T=\dom \wh T^*,\quad W(\wh T)=\dC, \quad 0\in\rho(\wh T),
\]
and if $\,T_1$ has compact resolvent, so does $\wh T$.
\end{proposition}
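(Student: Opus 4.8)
The plan is to recognise $\wh T$ as the lower block-triangular instance of Proposition~\ref{prprt1} with $T=T_1$, vanishing upper-right corner and lower-left corner $Y=X$, and to read off the four assertions from there once the single nontrivial hypothesis of that proposition has been checked. The domain equality, however, I would settle directly: since $X\in\bB(\sH)$, the operator $\wh T$ is a bounded perturbation of $\mathrm{diag}(T_1,-T_1)$, so $\dom\wh T=\dom T_1\oplus\dom T_1$ and
\[
   \wh T^*=\begin{bmatrix}T_1^*&X^*\cr 0&-T_1^*\end{bmatrix},\qquad \dom\wh T^*=\dom T_1^*\oplus\dom T_1^*.
\]
By Theorem~\ref{rav2} (see also Corollary~\ref{rav2cor}) one has $\dom T_1=\dom T_1^*=\dom\cB^*F\cB$, whence $\dom\wh T=\dom\wh T^*$.

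The decisive step is to verify the hypothesis of Proposition~\ref{prprt1}, namely that $T_1$ is \emph{not} of the form $\eta(A+\I C)$ with $A$ symmetric, $C$ bounded selfadjoint and $\eta\in\dC$. I would argue through the numerical range: any such operator satisfies $W(\eta(A+\I C))=\eta\,W(A+\I C)\subset\eta\{z\in\dC:|\IM z|\le\|C\|\}$, i.e.\ its numerical range lies in a strip, so it suffices to show that $W(T_1)$ lies in no strip. For unit $u\in\dom T_1$,
\[
   (T_1u,u)=(F\cB u,\cB u)+\I(\cB u,u),
\]
and, with $\epsilon:=\min\sigma(F)>0$ and the Cauchy--Schwarz bound $\|\cB u\|\ge(\cB u,u)$, one gets $(F\cB u,\cB u)\ge\epsilon\|\cB u\|^2\ge\epsilon(\cB u,u)^2$. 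Recalling from the proof of Theorem~\ref{rav2} that $\dom T_1=\dom\cB^*F\cB$ is a core of the unbounded operator $\cB$, there is a sequence of unit vectors $u_n\in\dom T_1$ with $(\cB u_n,u_n)\to\infty$; then for every $\phi\in[0,2\pi)$ the quantity $\RE\big(\e^{-\I\phi}(T_1u_n,u_n)\big)=\cos\phi\,(F\cB u_n,\cB u_n)+\sin\phi\,(\cB u_n,u_n)$ is unbounded (the quadratic term dominates when $\cos\phi\ne0$, and it reduces to $\pm(\cB u_n,u_n)\to\pm\infty$ when $\cos\phi=0$). Hence $W(T_1)$ escapes every strip, $T_1$ is not of the excluded form, and Proposition~\ref{prprt1} yields $W(\wh T)=\dC$. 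I expect this strip estimate to be the main obstacle, as everything else is essentially bookkeeping.

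For the spectral claims I would use that $\cB$ positive definite forces $0\in\rho(T_1)$ by Corollary~\ref{rav2cor}~i). Since $\wh T$ is lower block-triangular with invertible diagonal entries $\pm T_1$, it is boundedly invertible with
\[
   \wh T^{-1}=\begin{bmatrix}T_1^{-1}&0\cr T_1^{-1}XT_1^{-1}&-T_1^{-1}\end{bmatrix},
\]
so $0\in\rho(\wh T)$; this is exactly the triangular case of the final assertion of Proposition~\ref{prprt1}. If, in addition, $T_1$ has compact resolvent, then $T_1^{-1}$ is compact, hence so are all three nonzero blocks above (each carries a compact factor), and therefore $\wh T^{-1}$ is compact, i.e.\ $\wh T$ has compact resolvent.
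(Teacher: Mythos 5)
Your proof is correct, and it reaches the same endpoint as the paper (reduction to Proposition \ref{prprt1} plus Theorem \ref{rav2} ii) and Corollary \ref{rav2cor} i)), but the decisive step --- verifying that $T_1$ is not of the form $\eta(A+\I C)$ with $A$ symmetric and $C$ bounded selfadjoint --- is done by a genuinely different argument. The paper argues by contradiction on the operator level: if $T_1=(x+\I y)(A+\I C)$, then $\dom T_1=\dom T_1^*$ forces $A$ to be selfadjoint; comparing imaginary parts of the quadratic forms gives $W\big(\cB\uphar\dom\cB^*F\cB-(xC+yA)\big)=\{0\}$, hence $\cB\uphar\dom\cB^*F\cB=xC+yA$ would be selfadjoint and in particular closed, while unboundedness of $F^{1/2}\cB$ yields $\dom\cB^*F\cB\subsetneq\dom\cB$ by a cited theorem, so this restriction of $\cB$, whose domain is a core, cannot be closed --- a contradiction. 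You instead argue through the numerical range: operators of the excluded form have numerical range in a strip, and you show quantitatively that $W(T_1)$ escapes every strip, using $\min\sigma(F)>0$, Cauchy--Schwarz, and unit vectors $u_n\in\dom T_1$ with $(\cB u_n,u_n)\to\infty$; such $u_n$ exist because an unbounded non-negative symmetric operator has an unbounded quadratic form (Cauchy--Schwarz for the form plus density of $\dom\cB$), and the core property transports this to $\dom\cB^*F\cB$ --- a step you state tersely but which is sound. Your route is more elementary and self-contained: it avoids the cited result on the proper inclusion $\dom\cB^*F\cB\subsetneq\dom\cB$, uses only the easy implication ``excluded form $\Rightarrow$ numerical range in a strip'', and in fact your strip-escaping estimate would let one bypass the nontrivial converse (the lemma invoked inside the proof of Proposition \ref{prprt1}) altogether; what the paper's argument buys instead is that it needs no coercivity estimate, only structural facts about selfadjointness and cores. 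Your remaining steps --- the adjoint of $\wh T$ as a bounded perturbation of the diagonal part, the explicit triangular inverse giving $0\in\rho(\wh T)$, and compactness of $\wh T^{-1}$ blockwise --- are correct and coincide in substance with what Proposition \ref{prprt1} delivers.
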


\begin{proof}
The claims follow from Proposition \ref{prprt1} if we
show that $T_1$ is not of the form $\eta(A+\I C)$ where $A$ is symmetric and $C$ is bounded selfadjoint. 
Otherwise, for some $x$, $y\in\dR$,
\[
   T_1  =(x+\I y)(A+\I C)
	\quad
	T_1^*	=(x-\I y)(A^*-\I C).
\]
Because $\dom T_1=\dom T_1^*$ by Theorem \ref{rav2} ii) and $C$ is bounded, it follows that $\dom A = \dom T_1=\dom T_1^* = \dom A^*$ and hence
$A$ is selfadjoint. Taking scalar products with all $f\in \dom T_1 = \dom \cB^*F\cB$ on both sides of
\[
   T_1=\cB^*F\cB+\I\cB=(xA-yC)+\I(xC+yA)
\]
and then imaginary parts, we conclude that the operator $S\!:=\!\cB \uphar \dom \cB^*F\cB - (xC+yA)$ satisfies $W(S)\!=\!\{0\}$. This implies that
$S$ is bounded and further that $S=0$, see \cite[Thm.\ 2.51]{MR1887367}, i.e.\
$\cB{\uphar \dom \cB^*F\cB}=xC+yA$; note that $\sH$ is always assumed to be a complex Hilbert space.
Since $C$ is bounded selfadjoint, it follows that $\cB{\uphar \dom \cB^*F\cB}$ is selfadjoint and therefore, in particular, closed.
By assumption the operator $\cB$,  and hence also $F^{1/2} \cB$, is unbounded. This implies that $\dom \cB^*F\cB \!\subsetneq\! \dom \cB$ by \cite[Thm.\ 3.3]{MR760619} and thus $\cB{\uphar \dom \cB^*F\cB}$ is not closed, a contradiction.
\end{proof}


\section{{Operator families with $\dom T(z)\cap\dom T(z)^*=\{0\}$, $z\in \dC$}}
\label{sec:more-ex}

In this section we return to the first example of a densely defined operator $T=UA$ with $\dom T \cap \dom T^* = \{0\}$
in a separable Hilbert space, directly derived from von Neumann's theorem, Theorem \ref{vnthm},
with unbounded selfadjoint $A$ and unitary $U$.

The following more general theorem shows that such examples are not isolated,
in the sense that there exists a strongly continuous operator family $T(z)$, $z\in \dC$,
whose values are selfadjoint operators in a Krein space with this property; here we rely on
\cite[Thms.~3.7, 3.9 and 3.10]{MR3294393}.

\begin{theorem}
\label{new1}
Let $S$ be an unbounded selfadjoint operator in a separable Hilbert space~$\sH$. Then there exists a nonconstant operator-norm continuous family
of fundamental \vspace{-1mm}symmetries
\[
   \dC {\to \bB(\sH),} \quad z\mapsto J(z),
\]
with $J(z)=-J(-1/\bar z)$, $z\ne 0$, such that the operator \vspace{-1mm} function
\[
   T(z):=J(z)S,\quad {\dom T(z)=\dom S,} \quad z\in\dC,
\]
has the \vspace{-1mm} property
\begin{align}
\label{yjdsth}
   \dom T(z)\cap\dom T(z)^*=\{0\}, \quad z\in\dC.
\end{align}
\vspace{-1mm}Moreover,
\begin{align}
\label{yjdsth1}
   \dom T(\I x)^*\cap\dom T(\I y)^*&=\{0\},  \quad x,y\in\dR, \ x\ne y,\ 1+xy\ne 0,\\
\label{yjdsth2}
   \dom T(\zeta)^*\,\cap\,\dom T(\xi)^*\hspace{1mm}&=\{0\},  \quad|\zeta|=|\xi|=1,\ \zeta\ne\pm \xi.
\end{align}
\end{theorem}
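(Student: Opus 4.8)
The plan is to first reduce all three assertions to von~Neumann-type domain-disjointness statements, then to exhibit $J(z)$ as the stereographic image of a two-sphere of fundamental symmetries, and finally to arrange the three generating symmetries so that the required disjointness holds uniformly.

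First I would compute the adjoint. Since $J(z)$ is bounded with $J(z)^*=J(z)=J(z)^{-1}$ and $S=S^*$, the adjoint rule for an everywhere-defined bounded factor gives
\[
   T(z)^*=S\,J(z), \quad \dom T(z)^*=J(z)\dom S, \quad z\in\dC,
\]
so that $\dom T(z)\cap\dom T(z)^*=\dom S\cap J(z)\dom S$ and, more generally, since each $J(z)$ is a bijection,
\[
   J(z)\dom S\cap J(w)\dom S=\{0\}\iff \dom S\cap J(w)J(z)\dom S=\{0\}, \quad z,w\in\dC.
\]
Thus \eqref{yjdsth} asks that each $J(z)$ be a \emph{von Neumann symmetry} for $S$, i.e.\ $\dom S\cap J(z)\dom S=\{0\}$, while \eqref{yjdsth1} and \eqref{yjdsth2} ask the same for the unitaries $J(\I y)J(\I x)$ and $J(\xi)J(\zeta)$.

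Next I would build the family from three mutually anticommuting fundamental symmetries $J_1,J_2,J_3$ in $\sH$ (so $J_k^*=J_k=J_k^{-1}$ and $J_jJ_k=-J_kJ_j$ for $j\ne k$), setting
\[
   J(z):=\frac{(1-|z|^2)\,J_1+2\,\RE z\,J_2+2\,\IM z\,J_3}{1+|z|^2}, \quad z\in\dC.
\]
The three coefficients are the stereographic coordinates of a unit vector in $\dR^3$, so $J(z)^2=I$ and $J(z)=J(z)^*$, i.e.\ each $J(z)$ is a fundamental symmetry; the family is operator-norm continuous and nonconstant (the $J_k$ being distinct by anticommutativity). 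Replacing $z$ by $-1/\bz$ negates each of the three coordinates, whence $J(-1/\bz)=-J(z)$, the required relation. A direct computation gives $J(\I x)=c_xJ_1+a_xJ_3$ with $c_x=\tfrac{1-x^2}{1+x^2}$, $a_x=\tfrac{2x}{1+x^2}$, and hence
\[
   J(\I y)J(\I x)=\cos\phi\, I+\sin\phi\, N, \quad N:=J_3J_1, \quad N^2=-I,
\]
where $\cos\phi=c_xc_y+a_xa_y$; one checks $\sin\phi=0$ exactly when $x=y$ or $1+xy=0$, precisely the cases excluded in \eqref{yjdsth1}. Likewise, for $|\zeta|=|\xi|=1$ the symmetries $J(\zeta),J(\xi)$ lie on the ``equator'' (their $J_1$-component vanishes) and $J(\xi)J(\zeta)=\cos\psi\,I+\sin\psi\,N'$ with a complex structure $N'$ in the span of $J_2J_3,J_3J_1,J_1J_2$, where $\sin\psi=0$ exactly for $\zeta=\pm\xi$, the cases excluded in \eqref{yjdsth2}.

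When $\sin\phi\ne0$, solving $(\cos\phi\,I+\sin\phi\,N)v=u$ with $u,v\in\dom S$ forces $Nv\in\dom S$, and since $N^{-1}=-N$ this gives $v\in\dom S\cap N\dom S$; hence \eqref{yjdsth1} and \eqref{yjdsth2} reduce to $\dom S\cap N\dom S=\{0\}$ for the relevant complex structures $N$, while \eqref{yjdsth} reduces to $\dom S\cap J(z)\dom S=\{0\}$ for the unit symmetries $J(z)$. The hard part will therefore be to choose $J_1,J_2,J_3$ \emph{adapted to the given $S$} so that \emph{every} unit-vector symmetry $a_1J_1+a_2J_2+a_3J_3$ and \emph{every} complex structure arising above moves $\dom S$ to a subspace meeting $\dom S$ only in $\{0\}$, i.e.\ is a von Neumann unitary for $S$. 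The existence of a single such symmetry is von~Neumann's theorem, Theorem~\ref{vnthm} (in the equivalent formulation (iii)); the construction of an entire compatible triple $J_1,J_2,J_3$ realizing this two-sphere of disjointness conditions, together with the verification of \eqref{yjdsth}--\eqref{yjdsth2}, is exactly what \cite[Thms.~3.7, 3.9, 3.10]{MR3294393} provide, and assembling those statements completes the proof.
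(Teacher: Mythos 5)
Your overall route coincides with the paper's: compute $T(z)^*=SJ(z)$ with $\dom T(z)^*=J(z)\dom S$, reduce all three assertions to von Neumann--type disjointness statements, and invoke \cite[Thms.~3.7, 3.9, 3.10]{MR3294393} for the hard construction. Your algebra is correct: the stereographic coefficients do produce fundamental symmetries with $J(-1/\bz)=-J(z)$, and your computations of when $\sin\phi$ and $\sin\psi$ vanish reproduce exactly the excluded parameter sets in \eqref{yjdsth1} and \eqref{yjdsth2}. In fact your ansatz is the paper's construction in disguise: the paper applies the cited theorems to $A:=(S^2+I)^{-1}$, obtains a norm-continuous family of subspaces $\cM(z)$ in generic position with respect to $\ran A^{1/2}=\dom S$, and sets $J(z):=2P_{\cM(z)}-I$; if $\cM(z)$ is a graph subspace $\{(u,zu):u\in\sK\}$ in a decomposition $\sH=\sK\oplus\sK$, then
\[
   2P_{\cM(z)}-I=\frac{1}{1+|z|^2}\begin{bmatrix} (1-|z|^2)I & 2\bz I\\ 2zI & -(1-|z|^2)I \end{bmatrix},
\]
which is precisely your formula with the Pauli-type anticommuting triple.

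The gap is the final step. What your argument needs is the existence of an anticommuting triple $J_1,J_2,J_3$ \emph{adapted to the given} $S$ such that every unit combination $a_1J_1+a_2J_2+a_3J_3$ and the products $J_3J_1$, $J_2J_3$ map $\dom S$ onto a subspace meeting $\dom S$ trivially. This simultaneous family of disjointness conditions is the entire analytic content of the theorem: von Neumann's theorem, Theorem~\ref{vnthm}, produces a \emph{single} such symmetry, and there is no evident way to upgrade one symmetry to a compatible triple. You assert this triple is ``exactly what \cite[Thms.~3.7, 3.9, 3.10]{MR3294393} provide,'' but that is a guess about the reference's content: as the paper uses those theorems, they supply a family of \emph{subspaces} $\cM(z)$ with the generic-position and continuity properties, from which the paper forms $J(z)=2P_{\cM(z)}-I$; nothing in your proposal shows that this family carries your linear two-sphere structure. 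As a self-contained argument the crucial existence step is therefore unproven. It becomes a proof only after either (a) verifying that the reference's $\cM(z)$ are graph-type subspaces so that the identification displayed above applies, or (b) discarding the triple scaffolding and citing the reference the way the paper does -- for the subspaces $\cM(z)$ themselves, with $J(z):=2P_{\cM(z)}-I$ -- in which case your reduction, though correct, is no longer needed.
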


\begin{proof}
The claims follow from \cite[Thms.~3.7, 3.9 and 3.10]{MR3294393} if we apply the former two with $A:= (S^*S\!+\!I)^{-1}\!=(S^2\!+\!I)^{-1}$ and then set $J(z)\!:=\! P_{\cM(z)} \!-\! (I\!-\!P_{\cM(z)}) \!=\! 2 P_{\cM(z)} \!-\! I$, $z\!\in\! \dC$,
where $P_{\cM(z)}$ is the orthogonal projection onto the subspace $\cM(z)$ therein. Note that $A$ is a bounded, uniformly positive selfadjoint operator with $\ran A \subset \ran A^{1/2} =\dom S \ne \sH$ since $S$ is unbounded and that $\dom T(z)^*=\dom(S J(z)^*)= J(z) \dom S$ since $J(z)$ is bounded and a fundamental symmetry for $z\in\dC$.
\end{proof}

\begin{remark}
We mention that the operator family $T(z)$, $z\in \dC$, in Theorem \ref{new1} has the interesting property that
$\dom T(z)=\dom T(z')=\dom S$ is constant for all $z,z'\in \dC$, but
the domain intersections of the adjoint operators $\dom T(z)^* \cap \dom T(z')^*$ are trivial for certain pairs of $z,z'\in \dC$
by \eqref{yjdsth1}, \eqref{yjdsth2}.
\end{remark}

In general, selfadjoint operators in Krein spaces may have empty resolvent set \cite[Sect.~1.2]{MR0341174}.
Proposition \ref{prprt1} enables us to construct operator families $\wh T(z)$, $z\in \dC$,
for which $\wh T(z)$ has full numerical range, but non-empty resolvent set or even compact resolvent and for which
the properties \eqref{yjdsth}, \eqref{yjdsth1}, \eqref{yjdsth2} in Theorem \ref{new1} still hold.

\begin{corollary}
\label{newnumr}
Let $S$ and $J(z)$, $z\in \dC$, be as in Theorem {\rm \ref{new1}}. Suppose, in addition, that $S$ is chosen such that $0\in \rho(S)$,
and let the operators $\wh T(z)$, $z\in\dC$, be of the form
\[
  \wh T(z):=\begin{bmatrix}J(z)S&X\cr 0&-J(z)S  \end{bmatrix} \quad \mbox{or} \quad
	\wh T(z) =\begin{bmatrix}J(z)S&0\cr Y&-J(z)S  \end{bmatrix}
\]
with $X$, $Y\in\bB(\sH)$. Then
\[
   \dom \wh T(z)\cap \dom \wh T(z)^*=\{0\}, \quad  W\big(\wh T(z)\big)=W\big(\wh T(z)^*\big)=\dC, \quad
	 0\in\rho\big(\wh T(z)\big),
	 \qquad z\in\dC;
\]
moreover, if $S$ is chosen to have compact resolvent, then so does $\wh T(z)$, $z\in\dC$.
\end{corollary}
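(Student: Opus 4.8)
The plan is to combine Theorem~\ref{new1} with Proposition~\ref{prprt1}, using the extra hypothesis $0\in\rho(S)$ to obtain the resolvent and compactness statements. First I would set $T:=J(z)S$ for a fixed $z\in\dC$ and verify that this operator meets the requirements of Proposition~\ref{prprt1}. The off-diagonal block structure of $\wh T(z)$ is exactly the one treated there, with the diagonal operator $T=J(z)S$ playing the role of the generic $T$, so once the hypotheses are checked the conclusions
\[
   \dom\wh T(z)\cap\dom\wh T(z)^* = \big(\dom T\cap\dom T^*\big)\oplus\big(\dom T\cap\dom T^*\big),
   \quad W\big(\wh T(z)\big)=\dC,
\]
follow immediately, and \eqref{yjdsth} from Theorem~\ref{new1} gives $\dom T\cap\dom T^*=\{0\}$, hence triviality of the intersection for $\wh T(z)$.

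The main step is to confirm the exclusion hypothesis of Proposition~\ref{prprt1}: that $T=J(z)S$ is \emph{not} of the form $\eta(A+\I C)$ with $A$ symmetric, $C$ bounded selfadjoint, $\eta\in\dC$. I would argue by contradiction via Remark~\ref{corso}: if $T$ had this form, then $\dom T\subseteq\dom T^*$ and in particular $\dom T\cap\dom T^*=\dom T=\dom S$ would be dense. But by \eqref{yjdsth} we have $\dom T\cap\dom T^*=\{0\}$, while $\dom S$ is dense since $S$ is a densely defined (selfadjoint) operator in the separable Hilbert space $\sH$; this is the desired contradiction. Thus $T$ is admissible and Proposition~\ref{prprt1} applies.

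For the numerical range of the adjoint, I would note that $\wh T(z)^*$ is again a block operator of the same triangular type with diagonal entries $\pm(J(z)S)^*$ and a single bounded off-diagonal block, namely $X^*$ in the lower-left corner (or $Y^*$ in the upper-right). Since $(J(z)S)^*=S J(z)$ is a selfadjoint operator in the Krein space framework and, viewed as an operator in $\sH$, is likewise not of the excluded form $\eta(A+\I C)$ (otherwise $\dom\wh T(z)^*$ would be dense, contradicting the triviality just established, or one argues directly that the excluded form is preserved under adjunction up to the conjugate $\eta$), Proposition~\ref{prprt1} applied to $\wh T(z)^*$ yields $W\big(\wh T(z)^*\big)=\dC$ as well.

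Finally, the resolvent claims use $0\in\rho(S)$. Because $J(z)$ is a bounded boundedly invertible fundamental symmetry, $0\in\rho(S)$ gives $0\in\rho(J(z)S)=\rho(T)$, so the diagonal blocks of $\wh T(z)$ are boundedly invertible. For the triangular matrices with $X=0$ or $Y=0$, Proposition~\ref{prprt1} already records that $0\in\rho(T)$ implies $0\in\rho\big(\wh T(z)\big)$, and the explicit inverse can be written using the Schur complement, which here is diagonal. The compactness statement follows likewise: if $S$ has compact resolvent then so does $J(z)S$ (bounded invertible $J(z)$ times $S$), and the bounded perturbations $X$, $Y$ do not affect compactness of the resolvent of the block operator, exactly as asserted in Proposition~\ref{prprt1}. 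The only point requiring a little care is the uniformity in $z$: since $J(z)$ depends continuously on $z$ in operator norm by Theorem~\ref{new1} and is a fundamental symmetry for every $z$, all the above holds for each $z\in\dC$, giving the stated conclusions for the whole family.
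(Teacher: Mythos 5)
Your proof is correct and takes essentially the same route as the paper: the paper's entire proof consists of applying Proposition~\ref{prprt1} to $T(z)=J(z)S$ after noting that, by \eqref{yjdsth} and Remark~\ref{corso}, $T(z)$ cannot be of the excluded form $\eta(A+\I C)$, with the adjoint, resolvent and compactness claims left implicit exactly as you spell them out. Two small blemishes in your adjoint step: what would become dense if $(J(z)S)^*=SJ(z)$ had the excluded form is the intersection $\dom \wh T(z)\cap\dom \wh T(z)^*$, not $\dom \wh T(z)^*$ itself (the latter is always dense, since $\wh T(z)$ is closed and densely defined), and the parenthetical alternative --- that the excluded form is preserved under adjunction --- is unreliable, because the adjoint of a symmetric operator need not be symmetric; your primary density argument, however, is exactly the intended one.
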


\begin{proof}
All claims follow from Proposition \ref{prprt1} if we note that $T(z)=J(z)S$, $z\in\dC$, cannot be of the
form $T(z)=\eta(A+\I C)$ with a symmetric operator $A$, a bounded selfadjoint operator $C$ and $\eta\in\dC$ since,
by \eqref{yjdsth}, $\dom T(z)\cap\dom T(z)^*=\{0\}$ is not dense, see Remark \ref{corso}.
\end{proof}


\section{Holomorphic families of $m$-sectorial operators and domain intersections}
\label{last}

In this last section we show that the extreme phenomenon of $m$-sectorial operators having
domain intersection $\{0\}$ with its adjoint is not isolated. There are classes of holomorphic families $T(z)$,
$z\in S(\frac {\pi}{2}- \alpha)$, of type (B), see \cite[Sect.~VII.4.2]{MR1335452}, associated with holomorphic families of sectorial forms defined in some sector $S(\frac {\pi}{2}- \alpha) \subset \dC_+$ for which the domain intersections $\dom T(z) \cap \dom T(\zeta)$, $z \ne \zeta$, and
$\dom T(z) \cap \dom T(z)^*$ may be dense or $\{0\}$.

We mention that the density of domain intersections $\bigcap_{t\in\dR} \dom (T(t)) $for families $T(t)$, $t\in\dR$, of closed densely defined
operators plays a role in measurability properties of such families, see \cite[Lemma 4.5]{MR2897729}.

\begin{theorem}\label{selfmam}
Let $L$ be an unbounded non-negative selfadjoint operator in a Hilbert space $\sH$ with $\ker L=\{0\}$ and let
$G$ be a non-negative bounded selfadjoint operator in $\sH$ with
\begin{equation}\label{hth1}
   \ker G=\{0\}, \quad \ran G\cap\dom L=\{0\}.
\end{equation}
Then the operators
\[
   \Lambda(z):=L(I+zG)L, \quad \RE z\ge 0,
\]
are $m$-sectorial and form a holomorphic family of type {\rm (B)} in the open right half-plane~$\dC_+$~with
\begin{alignat}{3}
\label{hth2}
&\dom\Lambda(\xi)\cap\dom\Lambda(\mu)&&=\{0\}, &  \quad \xi,\mu \in \dC,& \; \RE\xi\ge 0,\;\RE\mu\ge 0, \; \xi\ne\mu,\\
\label{hth3}
&\dom\Lambda(z)\cap\dom\Lambda(z)^*&&=\{0\},   &  \quad z \in \dC,& \;\RE z\ge 0,\;\IM z\ne 0.
\end{alignat}
\end{theorem}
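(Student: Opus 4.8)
The plan is to reduce the holomorphic-family statement to the structural facts already established for the single-operator case in Theorem \ref{subsec:max-sect-dim=0}, and then to supply the two extra ingredients: holomorphy of type (B) and the two domain-intersection identities \eqref{hth2}, \eqref{hth3}. The operators $\Lambda(z)=L(I+zG)L$ are precisely of the shape that appears in the proof of Theorem \ref{subsec:max-sect-dim=0}, with the constant coefficient $I+\I G$ replaced by the $z$-dependent coefficient $I+zG$. So I would first recall, via the first representation theorem \cite[Sect.~VI.2.1]{MR1335452}, that $\Lambda(z)$ is the $m$-sectorial operator associated with the closed sectorial form
\[
   \lambda_z[u,v]=((I+zG)Lu,Lv),\quad \dom\lambda_z=\dom L,
\]
which is sectorial because $\RE z\ge 0$ and $G\ge 0$ force $\RE\lambda_z[u,u]=\|Lu\|^2+\RE z\,(GLu,Lu)\ge 0$, with the imaginary part controlled by $|\IM z|\,(GLu,Lu)$. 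This same computation gives the characterization, exactly as in \eqref{intersection},
\[
   \dom\Lambda(z)=\{u\in\dom L:(I+zG)Lu\in\dom L\},\quad \Lambda(z)u=L(I+zG)Lu.
\]

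Next I would establish the \emph{type (B)} holomorphy. The family of forms $\lambda_z$ all share the fixed form domain $\dom L$ (independent of $z$), and for each fixed $u,v\in\dom L$ the map $z\mapsto\lambda_z[u,v]=(Lu,Lv)+z(GLu,Lv)$ is affine, hence entire, in $z$. This is the defining condition of a holomorphic family of forms of type (a) in the sense of \cite[Sect.~VII.4.2]{MR1335452}, and the uniform sectoriality on compact subsets of $\dC_+$ (which follows from $G$ being bounded and $\RE z$ bounded below by a positive constant on such subsets) upgrades this to a holomorphic family of type (B) of the associated operators $\Lambda(z)$. I would cite the relevant theorem in \cite[Sect.~VII.4.2]{MR1335452} rather than reprove it.

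The substantive work is the two intersection identities. For \eqref{hth3}, fix $z$ with $\IM z\ne 0$; the adjoint form is $\lambda_z^*[u,v]=((I+\bar z G)Lu,Lv)$, so $\dom\Lambda(z)^*=\{u\in\dom L:(I+\bar z G)Lu\in\dom L\}$. If $u$ lies in both domains, then both $(I+zG)Lu$ and $(I+\bar zG)Lu$ lie in $\dom L$, hence so does their difference $(z-\bar z)GLu=2\I\,\IM z\,GLu$; since $\IM z\ne 0$ this gives $GLu\in\dom L$, and since $Lu=(I+zG)Lu-zGLu$ with both terms now in $\dom L$ we get $Lu\in\dom L$, i.e.\ $u\in\dom L^2$. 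Thus $u\in\dom L^2$ and $GLu\in\dom L$, which is exactly the condition in \eqref{intersection}; by the properties \eqref{hth1} of $G$ and $\ker L=\{0\}$ — argued verbatim as at the end of the proof of Theorem \ref{subsec:max-sect-dim=0}, namely $GLu\in\dom L\cap\ran G=\{0\}$ forces $Lu=0$ hence $u=0$ — we conclude $\dom\Lambda(z)\cap\dom\Lambda(z)^*=\{0\}$. For \eqref{hth2}, the key observation is that the adjoint of $\Lambda(\mu)$ is $\Lambda(\bar\mu)$, so an intersection of two \emph{operator} domains $\dom\Lambda(\xi)\cap\dom\Lambda(\mu)$ with $\xi\ne\mu$ is handled by the identical subtraction trick: membership in both domains yields $(I+\xi G)Lu,(I+\mu G)Lu\in\dom L$, hence $(\xi-\mu)GLu\in\dom L$ with $\xi-\mu\ne 0$, so $GLu\in\dom L$ and then $Lu\in\dom L$, again landing in the condition of \eqref{intersection} and forcing $u=0$. \textbf{The main obstacle} I anticipate is not these algebraic reductions, which are routine, but rather verifying carefully that the \emph{type (B)} holomorphy genuinely holds under the stated hypotheses — in particular checking that the sectoriality constants can be chosen locally uniform across the closed half-plane boundary $\RE z=0$ where the real part of the form degenerates to $\|Lu\|^2$; one must confirm that the family is still of type (B) on all of the open right half-plane $\dC_+$ as claimed, and treat the boundary $\IM z\ne 0$, $\RE z=0$ cases of \eqref{hth3} with the understanding that there $\Lambda(\I x)$ is still $m$-sectorial by the preliminary sectoriality computation even though $\RE\lambda_{\I x}[u,u]=\|Lu\|^2$ carries no contribution from $G$.
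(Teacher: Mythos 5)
Your proposal is correct and follows essentially the same route as the paper's proof: the closed sectorial forms $((I+zG)Lu,Lv)$ on the fixed domain $\dom L$, affine (hence holomorphic) in $z$, give the type (B) family via the first representation theorem, and the subtraction trick $(\xi-\mu)GLu\in\dom L$ combined with \eqref{hth1} kills the intersections — the paper merely proves \eqref{hth2} first and then obtains \eqref{hth3} from $\Lambda(z)^*=\Lambda(\bar z)$, whereas you argue \eqref{hth3} directly. Your anticipated ``main obstacle'' is a non-issue: Kato's definition of a type (a) form family requires only pointwise sectoriality, closedness on a constant domain and holomorphy of $z\mapsto t(z)[u]$, so no locally uniform sectoriality constants need to be verified, and the boundary $\RE z=0$ is irrelevant to the type (B) claim since it is made only on the open half-plane.
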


\begin{proof}
First we show that the operator $B(z)=I+zG$ is $m$-sectorial and coercive if $\RE z\ge 0$.
For $f\in H$ and $z=\I t$, $t\in\dR$, we have
\begin{align*}
  &\RE (B(\I t)f,f)=\|f\|^2,\\
  &\left|\IM (B(\I t)f,f)\right|=|t|\,(Gf,f)\le |t| \,\|G\| \,\|f\|^2 = |t| \,\|G\| \,\RE (B(\I t)f,f),
\end{align*}
while for $z=r\exp(\I \varphi)$, $r>0$, $\varphi\in(- \frac \pi 2, \frac \pi 2)$,
\begin{align*}
  & \RE(B(z)f,f) =\|f\|^2+\RE z\, (Gf,f)\ge \|f\|^2, \\
  & \left|\IM(B(z)f,f)\right|= \frac{\IM z}{\RE z} \RE z \, (Gf,f) \le r\tan|\varphi| \,\RE (B(z)f,f).
\end{align*}
Note that $B(x)$ is a selfadjoint non-negative operator if $x\ge 0$.

Being $m$-sectorial and coercive, the operators $B(z)$, $z\in\dC_+$, give rise to closed sectorial sesquilinear forms
\[
   \sb_L(z)[h,g]:=(B(z)Lh,Lg)=\left((I+zG)Lh, Lg\right),\quad h,g\in\dom L.
\]
Because the form $z\mapsto \sb_L(z)$ is holomorphic on $\dC_+$, $\sb_L(z)$, $z\in \dC_+$, is closed on $\dom L$ and
the operator $\Lambda(z)=LB(z)L$ is associated with $\sb_L(z)$ by the first representation theorem,
the family $\Lambda(z)$ is holomorphic of type (B). Using the equality
\[
    \dom \Lambda(z)=\left\{h\in\dom L: (I+z G)Lh\in\dom L\right\}
\]
together with \eqref{hth1} and $\ker L= \ker G= \{0\}$,  we conclude that $y \in \dom\Lambda(\xi)\cap\dom\Lambda(\mu)$ satisfies
$(\xi-\mu)GLy \in \dom L$ and hence $y=0$ if $\xi\ne \mu$, which proves \eqref{hth2}.
Since $\Lambda(z)^*=\Lambda(\bar z)$, \eqref{hth2} implies \eqref{hth3}.
\end{proof}

\begin{theorem}\label{holsemgr}
Let $A$ be an $m$-$\alpha$-sectorial operator in a Hilbert space $\sH$ with $\alpha \!\in\! [0,\frac \pi 2)$ and  \vspace{-2mm} let
\[
   T(z):=\exp(-zA), \quad z\in\cS\left(\frac{\pi}2\!-\!\alpha\right):=\left\{z\in\dC:|\arg z| < \cfrac{\pi}{2}\!-\! \alpha\right\}
\]	
be the holomorphic contractive semigroup generated by $-A$, see \cite[Thm. IX.1.24]{MR1335452}. Then
\begin{equation}\label{psifi}
   \Psi(z):=A^*\left(I+T(z)\right)A, \quad \Phi(z):= A^*\left(I+T(z)\right)^{-1}A, \quad z\in \mathcal{S}\left(\frac{\pi}2\!-\!\alpha\right),
\end{equation}
are $m$-$(\alpha\!+\!|\arg z|)$-sectorial operators and form holomorphic families of type {\rm (B)} with
\begin{equation}
\label{yjdjt}
    \dom \Psi(z)^*=\dom \Phi(z)^*=\dom A^*A, \quad z\in \mathcal{S}\left(\frac{\pi}2\!-\!\alpha\right),
\end{equation}
and hence both $\Psi(\bar z)^*$ and $\Phi(\bar z)^*$, $z\!\in\! \mathcal{S}(\frac{\pi}2\!-\!\alpha)$,
form holomorphic families of type {\rm (A)}, see \cite[Sect.~VII.2]{MR1335452}. Moreover
\begin{enumerate}
\item[{\rm i)}] 
if $\,\dom A=\dom A^*$,
\vspace{-2mm}then
\begin{equation}\label{semgr3}
   \dom \Psi(z) \!=\! \dom \Psi(z)^* \!\!=\! \dom \Phi(z) \!=\! \dom \Phi(z)^* \!\!=\!\dom A^*A,
   \quad z\!\in\! \cS\left(\frac{\pi}2\!-\!\alpha\right), \hspace{-10mm} \vspace{-1mm}
\end{equation}
and both $\Psi(z)$ and $\Phi(z)$, $z\!\in\! \mathcal{S}(\frac{\pi}2\!-\!\alpha)$, form holomorphic families of type \vspace{2mm} {\rm (A)};
\item[{\rm ii)}] 
if $\,\dom A\cap\dom A^*=\{0\}$,
then
\begin{equation}
\label{semgr4}
   \dom\Psi(\xi)\cap\dom\Psi(\mu)^*\!\!=\! \dom\Phi(\xi)\cap\dom\Phi(\mu)^*\!\!=\!\{0\}, \quad \xi,\mu\!\in\!\mathcal{S}\left(\frac{\pi}2\!-\!\alpha\right).
	\hspace{-6mm}
\end{equation}
\end{enumerate}
\end{theorem}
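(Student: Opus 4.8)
The plan is to represent $\Psi(z)$ and $\Phi(z)$ through the closed sectorial forms they induce on the fixed domain $\dom A$ and to extract every domain statement from the associated adjoint forms together with the smoothing of the analytic semigroups generated by $-A$ and $-A^*$. Fix $z\in\cS(\frac\pi2-\alpha)$ and put $\beta:=\alpha+|\arg z|<\frac\pi2$; then $zA$ is $m$-$\beta$-sectorial, so $T(z)=\exp(-(zA))$ is an injective contraction ($\|T(z)\|\le1$) mapping $\sH$ into $\bigcap_k\dom A^k$, and by the spectral mapping theorem for analytic semigroups $\sigma(T(z))\setminus\{0\}=\{\e^{-z\lambda}:\lambda\in\sigma(A)\}$ avoids $-1$ (since $|\arg(z\lambda)|\le\beta<\frac\pi2$), so $I+T(z)$ is boundedly invertible. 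On $\dom A$ I consider
\[
   \psi_z[u,v]:=((I+T(z))Au,Av),\qquad \phi_z[u,v]:=((I+T(z))^{-1}Au,Av),
\]
which depend holomorphically on $z$ and are associated with $\Psi(z)=A^*(I+T(z))A$ and $\Phi(z)=A^*(I+T(z))^{-1}A$. From $\|T(z)\|\le1$ and $-1\notin\sigma(T(z))$ one gets that $I+T(z)$ is \emph{uniformly} accretive: if $(T(z)w_n,w_n)\to-1$ along unit vectors, then $\|T(z)w_n+w_n\|^2\le2+2\RE(T(z)w_n,w_n)\to0$ would give $-1\in\sigma(T(z))$, a contradiction; hence $\RE\psi_z[u]\ge\delta\|Au\|^2$ for some $\delta>0$, the norm $\|u\|^2+\RE\psi_z[u]$ is equivalent to the graph norm of $A$, and $\psi_z$ is closed because $A$ is closed. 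The substitution $v=(I+T(z))^{-1}w$ transfers both sectoriality and uniform accretivity from $I+T(z)$ to its inverse, so $\phi_z$ is closed as well.

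Granting sectoriality (below), $\Psi(z)$ and $\Phi(z)$ are $m$-sectorial and form holomorphic families of type (B), and their adjoints are the operators of the adjoint forms, namely $\Psi(z)^*=A^*(I+T(z)^*)A$ and $\Phi(z)^*=A^*(I+T(z)^*)^{-1}A$ with $T(z)^*=\exp(-\overline z\,A^*)$, by the first representation theorem and \cite[Thm.~VI.2.5]{MR1335452}. The decisive point is that $\exp(-\overline z\,A^*)$ maps $\sH$ into $\bigcap_k\dom(A^*)^k\subset\dom A^*$: for $u\in\dom A$ this gives $T(z)^*Au\in\dom A^*$, so $(I+T(z)^*)Au\in\dom A^*\iff Au\in\dom A^*$, and writing $v=(I+T(z)^*)^{-1}Au=Au-T(z)^*v$ yields the same equivalence for $\Phi$. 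Hence
\[
   \dom\Psi(z)^*=\dom\Phi(z)^*=\{u\in\dom A:Au\in\dom A^*\}=\dom A^*A,
\]
which is \eqref{yjdjt}; since this domain is independent of $z$ and $z\mapsto\Psi(\overline z)^*u$, $z\mapsto\Phi(\overline z)^*u$ are holomorphic for each fixed $u$, the families $\Psi(\overline z)^*$, $\Phi(\overline z)^*$ are of type (A).

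For i), the hypothesis $\dom A=\dom A^*$ forces also $T(z)Au\in\bigcap_k\dom A^k\subset\dom A^*$, so the same computation collapses all four domains to $\{u\in\dom A:Au\in\dom A\}=\dom A^2=\dom A^*A$, whence \eqref{semgr3} and, by constant domain and holomorphy, type (A) for $\Psi(z)$, $\Phi(z)$ themselves. For ii), I argue directly: if $u\in\dom\Psi(\xi)\cap\dom\Psi(\mu)^*$, then $Au\in\dom A^*$ (from $\dom\Psi(\mu)^*=\dom A^*A$) and $(I+T(\xi))Au\in\dom A^*$ (from $\dom\Psi(\xi)$), so $T(\xi)Au\in\dom A^*$; as $T(\xi)Au\in\dom A$ by smoothing, $T(\xi)Au\in\dom A\cap\dom A^*=\{0\}$, and injectivity of $T(\xi)$ gives $Au=0$. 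Since $A$ is $m$-accretive, $\ker A\subseteq\dom A\cap\dom A^*=\{0\}$ by \eqref{kernel-accr}, so $u=0$; the same argument applied to $v=(I+T(\xi))^{-1}Au$ settles $\Phi$, proving \eqref{semgr4}.

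The one genuinely technical ingredient, which I expect to be the main obstacle, is the sectoriality angle, i.e.\ the numerical range inclusion $W(I+T(z))\subset\overline{\cS(\beta)}$ with $\beta=\alpha+|\arg z|$; granting it, $\psi_z[u]=((I+T(z))Au,Au)$ is a nonnegative multiple of a point of $W(I+T(z))$, hence lies in $\overline{\cS(\beta)}$, and the inversion substitution gives the same for $\phi_z$. At the scalar level this is the elementary estimate $1+\e^{-w}\in\overline{\cS(\beta)}$ for $|\arg w|\le\beta$, equivalently $\e^{-u}\sin(v-\beta)\le\sin\beta$ for $w=u+\I v$ with $|v|\le u\tan\beta$, which a one–variable maximization confirms (the maximum equals $\e^{-2\beta\cot\beta}\sin\beta<\sin\beta$). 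The difficulty is that numerical ranges are not preserved by the functional calculus, so passing from $\sigma(A)\subset\overline{\cS(\alpha)}$ to $W(I+T(z))$ for the non-normal operator $T(z)$ requires more; I would obtain it from the resolvent representation $T(z)=\frac1{2\pi\I}\int_\Gamma\e^{-z\lambda}(\lambda-A)^{-1}\,d\lambda$ together with the control of $W((\lambda-A)^{-1})$ by $W(A)$, or by invoking the corresponding numerical-range estimate for sectorial holomorphic semigroups.
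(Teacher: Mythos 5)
Your proposal is correct and follows essentially the same route as the paper: closed sectorial forms on $\dom A$, the spectral mapping theorem to exclude $-1$ from $\sigma(T(z))$, smoothing and injectivity of the semigroups $T(z)$, $T(z)^*$ to identify $\dom \Psi(z)^*=\dom\Phi(z)^*=\dom A^*A$, and the same difference-plus-injectivity arguments (using $\ker T(\xi)=\{0\}$ and $\ker A\subset\dom A\cap\dom A^*=\{0\}$) for parts i) and ii), with your approximate-spectrum coercivity argument being a harmless variant of the paper's identity $(I+K^*)(I+K)+I-K^*K=2(I+\RE K)$. The one step you flag as the main obstacle, the precise semi-angle $\alpha+|\arg z|$, is settled in the paper exactly by your second suggested route, namely by citing the known numerical-range estimate $|\IM (T(r\e^{\pm\I\varphi})f,f)|\le\frac12\tan(\alpha+\varphi)\big(\|f\|^2-\|T(r\e^{\pm\I\varphi})f\|^2\big)$ for holomorphic contraction semigroups, combined with the elementary inequality $\|f\|^2-\|T(z)f\|^2\le 2\,\RE\big((I+T(z))f,f\big)$; your alternative contour-integral route is not used there and would be considerably harder to control.
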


\begin{proof}
First we note that, since $A$ is $m$-sectorial, $A^*$ is $m$-sectorial as well with corresponding holomorphic contractive semigroup $T^*(\zeta):= \exp(-\zeta A^*)$,
$\zeta\in \cS(\frac \pi 2\!-\!\alpha)$, satisfying $T(z)^*=\exp(-\overline z A^*) = T^*(\overline z)$,  $z\in \cS(\frac \pi 2\!-\!\alpha)$.
By \cite[Sect.~IX.1.6, (1.53), Rem.~1.20]{MR1335452}, we have
\begin{equation}\label{hol}
   \cfrac{{\rm d}^n T(z)}{{\rm d}z^n}=(-1)^nA^nT(z), \quad  n\in\dN, \; z\in \mathcal{S}\left(\frac{\pi}2\!-\!\alpha\right),
\end{equation}
and similary for $T^*(\zeta)= \exp(-\zeta A^*)$, $\zeta\in \cS(\frac \pi 2\!-\!\alpha)$.
Because $T(z)$ and its derivatives are bounded and hence everywhere defined, and analogously for $T^*(\zeta)$, it follows that
\begin{equation}
\label{ran-sgr}
   \hspace*{-3mm}
   \ran T(z)\!\subset\!\bigcap\limits_{n\in\dN}\dom A^n\!\subset\!\dom A, \quad
	 \ran T(z)^*\!\!\subset\!\bigcap\limits_{n\in\dN}\dom A^{*n}\!\subset\!\dom A^*\!, \quad z\!\in\! \cS\!\left(\frac{\pi}2\!-\!\alpha\right).
\end{equation}
Since $z\mapsto T(z)$ is holomorphic, \eqref{hol}  yields that
\begin{equation}
\label{kernel-sgr}
    \ker T(z)=\ker T(z)^*=\{0\}, \quad z\!\in\! \cS\big(\frac \pi 2\!-\!\alpha\big).
\end{equation}
In fact, if $z_0 \!\in\! \cS\left(\frac{\pi}2\!-\!\alpha\right)$ and $T(z_0)f=0$ for $f\in H$, then \eqref{hol} yields that $T(z)f=0$ in some neighbourhood of
$z_0$ and hence $T(z)f=0$ for all $z\!\in\! \cS\big(\frac \pi 2\!-\!\alpha\big)$ by the identity theorem. Now
$\lim_{z\to 0 \atop \!\!\!\!\!z\in\cS(\frac \pi 2 \!-\! \alpha)\!\!\!} T(z)f = f$ shows that $f=0$.

Using the spectral mapping theorem for holomorphic semigroups, see \cite[Cor.~IV.3.12 (iii)]{MR1721989}\footnote{Note that $-A$ is sectorial of angle $\frac \pi 2 \!-\!\alpha$ in the sense of \cite[Def.~II.4.1~(iii)]{MR1721989}.}, \cite[Thm.~6.4]{MR2132380}, one can prove that
\begin{equation}
\label{spec-mapp}
    \exp(-z\sigma(A))=\sigma(T(z))\setminus\{0\}, \quad z \!\in\! \cS\big(\frac \pi 2\!-\!\alpha\big).
\end{equation}
To see this, let $z_0 \!\in\! \cS\big(\frac \pi 2\!-\!\alpha\big)$ be arbitrary and write $z_0=t_0\e^{\I\psi}$ with $t_0 \in [0,\infty)$, $|\psi| < \frac \pi 2\!-\!\alpha$.
Then $\widetilde A:=\e^{\I\psi} A$ is $m$-sectorial with semi-angle $\alpha+|\psi|<\frac \pi 2$. The corresponding semigroup
$\widetilde T(z):= \exp(-z\widetilde A) = \exp( -z \e^{\I\psi}A)$, $z \!\in\!\cS(\frac \pi 2 \!-\! (\alpha \!+\!|\psi|))$,
is holomorphic in $\cS(\frac \pi 2 \!-\! (\alpha\!+\!|\psi|))$ and \cite[Cor.~IV.3.12~(iii)]{MR1721989} applied to $-\widetilde A$ and $\widetilde T$ yields that
\[
   \exp(-t_0\e^{\I\psi}\sigma(A))=
   \exp(-t_0\sigma(\widetilde A))=\sigma(\widetilde T(t_0))\setminus\{0\} =
	 \sigma(\exp(-t_0\e^{\I\psi} A))\setminus\{0\}.  	
\]
Since $A$ is $m$-sectorial, we have $-1 \!\notin\! \exp(-z\sigma(A))$ and hence
$-1\in\rho(T(z))$ for all $z\!\in\!\cS(\frac \pi2\!-\!\alpha)$.
Altogether we have shown that $\ran (I+T(z))=H$ and $\ker T(z)=\{0\}$ for $z\!\in\!\cS(\frac \pi 2\!-\!\alpha)$.

For an arbitrary contraction $K$ in a Hilbert space $H$ with real part $\RE K\!=\!(K\!+\!K^*)/2$ we claim that
\begin{equation}
\label{verylast}
   \ran (I+K)=H  \implies \ran (I+\RE K)=H;
\end{equation}
in fact, one can show that even equivalence holds.  To prove \eqref{verylast} we note that, since the
eigenvectors of $K$ and $K^*$ at $-1$ coincide, see e.g.\ \cite[\S~2]{MR0016556},
we have $\ker (I+K)= \ker (I+K^*) = \ran (I+ K)^\perp = \ran(I+K^*)^\perp$ and hence
$\ran (I+K)=H$ implies that $-1\in\rho(K)$ and, further, $-1 \in \rho(K^*)$. Then
the left hand side of the equality
\begin{equation}
\label{ineq1}
   (I+K^*)(I+K)+I-K^*K=2(I+\RE K)
\end{equation}
is uniformly positive which implies, in particular,  that $\ran (I+\RE K)=H$.

Thus, it follows that $\ran(I\!+\!\RE T(z))\!=\!H$, and even $-1 \!\in\! \rho(\RE T(z))$, for all $z\!\in\! \cS(\frac \pi 2\!-\!\alpha)$.
This implies that, for every $z\!\in\! \cS(\frac \pi 2\!-\!\alpha)$, there exists $c(z)>0$ such that
\[
     \RE\big((I+T(z))f,f\big)\ge c(z) \|f\|^2, \quad f\in H,
\]
and hence
\[
   \left| \IM \big(T(z)f,f\big)\right| \le \|f\|^2\le \cfrac{1}{c(z)}\RE\big((I+T(z))f,f\big), \quad f\in H.
\]
This inequality shows that $I+T(z)$ is a coercive $m$-sectorial operator for each $z\!\in\! \cS(\frac \pi2\!-\!\alpha)$.
In fact, a more precise estimate was established in \cite[Thm.~1, Prop.~5]{MR925792} which yields that, for $\varphi\!\in\! [0,\frac \pi 2\!-\!\alpha)$ and $r\!>\!0$,
\[
   \left| \IM \big( T\big(r\exp(\pm \I\varphi)\big)f,f\big)\right|
   \le  \cfrac{1}{2}  \tan(\alpha+\varphi)\left( \|f\|^2-\left\|T\big(r\exp(\pm \I\varphi)\big)f\right\|^2\right), \quad f\in H.
\]
Hence taking into account \eqref{ineq1}, we conclude that
\[
   \left|\IM\big(T\big(r\exp(\pm \I\varphi)\big)f,f\big)\right| \le \tan(\alpha+\varphi) \RE\big((I+T\left(r\exp(\pm \I\varphi)\big))f,f\right), \quad
   \quad f\in H.
\]
This means that the bounded operator $I\!+\!\left(r\exp(\pm \I\varphi)\right)$ is  $m$-$(\alpha\!+\!\varphi)$-sectorial if $\varphi\!\in\! [0,\frac \pi 2\!-\!\alpha)$, and
the same is true for the inverse $\left(I\!+\!T\left(r\exp(\pm \I\varphi)\right)\right)^{-1}.$

It is not difficult to check that, for $z\!\in\! \cS(\frac \pi2\!-\!\alpha)$, the sesquilinear forms
\begin{alignat*}{2}
\psi(z)[f,g]&:=\left((I+T(z))Af,Ag\right), \quad  && f,g\in\dom A,\\
\phi(z)[f,g]&:=\left((I+T(z))^{-1}Af,Ag\right), \quad && f,g\in\dom A,
\end{alignat*}
are  sectorial, closed on $\dom A$ and holomorphic in the (open) sector $\cS(\frac \pi 2\!-\!\alpha)$.
The associated $m$-sectorial operators are the operators $\Psi(z)$ and $\Phi(z)$ given by \eqref{psifi}, respectively.  
Hence the latter form holomorphic families of type (B) with
\begin{equation}\label{domain}
\begin{array}{l}
\dom\Psi(z)=\left\{f\in \dom A: (I+T(z))Af\in\dom A^*\right\},\\[0.5mm]
\dom\Phi(z)=\left\{f\in \dom A: (I+T(z))^{-1}Af\in\dom A^*\right\},
\end{array}
\end{equation}
and, since $\ran T(z)^*\subset\dom A^*$ by \eqref{ran-sgr},
\begin{equation}\label{domain1}
\begin{array}{l}
\dom\Psi(z)^*=\left\{f\in \dom A: (I+T(z)^*)\,Af\hspace{2.2mm}\in \dom A^*\right\} = \dom A^*A,\\[0.5mm]
\dom\Phi(z)^*=\left\{f\in \dom A: (I+T(z)^*)^{-1}Af\!\in\dom A^*\right\}= \dom A^*A,
\end{array}
\end{equation}
which proves \eqref{yjdjt}.

i) Suppose that $\dom A=\dom A^*$. Then $\ran T(z)\subset\dom A=\dom A^*$ by \eqref{ran-sgr}
and $\dom \Psi(z)=\dom A^*A$ follows directly from \eqref{domain}.

If $f\!\in\!\dom\Phi(z)$, then $f\!\in\!\dom A$ and $h\!=\! (I+T(z))^{-1}Af\in\dom A^*$. Since $Af\!=\!(I+T(z))h$ and $T(z)h\!\in\!\dom A\!=\!\dom A^*$, we obtain
$Af\!\in\!\dom A^*$, i.e.\ $f\!\in\!\dom A^*A$.
Con\-versely, if $f\!\in\!\dom A^*A$ and we set $h\!:=\! (I+T(z))^{-1}Af$, then $(I+T(z))h\!=\!Af\!\in\!\dom A^*$. Now
$T(z)h\in\dom A=\dom A^*$ implies that $h\in\dom A^*$ and hence $f\in\dom \Phi(z)$.
This proves $\dom \Phi(z)=\dom A^*A$ and thus, together with \eqref{domain1}, that \eqref{semgr3} holds.

ii) Suppose that $\dom A \cap\dom A^*\!=\!\{0\}$.
First let $f\!\in\!\dom\Psi(\xi)\cap\Psi(\mu)^*$. Then \eqref{domain} and \eqref{domain1} yield that
\[
   \left\{\begin{array}{l}f\in\dom A,\\
   Af+T(\xi)Af\in\dom A^*,\\
   Af+T(\mu)^*Af\in\dom A^*,
   \end{array}\right.
\implies \
   \left\{\begin{array}{l}f\in\dom A,\\[1mm]
   T(\xi)Af-T(\mu)^*Af\in\dom A^*.
   \end{array}\right.
\]
Since $T(\mu)^*Af \!\in\!\dom A^*$ and $\ran T(\xi)\!\subset\! \dom A$, this
implies $T(\xi)Af\!\in\! \dom A \cap\dom A^*\!=\!\{0\}$.
Hence, by \eqref{kernel-sgr} and \eqref{kernel-accr}, it follows that $f\in \ker A \subset \dom A \cap \dom A^* = \{0\}$.

Now let $f\in\dom\Phi(\xi)\cap\Phi(\mu)^*$. Then \eqref{domain} and \eqref{domain1} yield that
\[
\left\{\!\!\begin{array}{l}
f\in\dom A,\\
g\!=\!(I+T(\xi))^{-1}Af\in\dom A^*\!,\\
h\!=\!(I\!+\!T(\mu)^*)^{-1}Af\!\in\!\dom A^*\!,
\end{array}\right.
\!\!\Longleftrightarrow
\left\{\!\!\begin{array}{l}f\in\dom A,\\
(I+T(\xi))g=Af,\\
(I\!+\!T(\mu)^*)h\!=\!Af,\\
g,h\in\dom A^*,
\end{array}\right.
\!\!\Longrightarrow \left\{\!\!\begin{array}{l}f\in\dom A,\\
T(\xi)g\!=\!h+T(\mu)^*h\!-\!g,\\
g,h\in\dom A^*.
\end{array}\right.
\]
Since $h\!+\!T(\mu)^*h\!-\!g\in\dom A^*$, we obtain $T(\xi)g\!\in\!\dom A  \cap \dom A^* =\{0\}$.
By \eqref{kernel-sgr}, we con\-clude $g\!=\!0$ and hence $Af\!=\!0$, which again implies $f\!=\!0$ by \eqref{kernel-accr}, as above.
This completes the proof of \eqref{semgr4}.
\end{proof}

In the last theorem we consider operator functions of the form $L^*(I+T(z))L$ and \linebreak $L^*(I+T(z))^{-1}L$
with a closed densely defined operator $L$.
Note that Theorem~\ref{holsemgr} is not a special case of Theorem~\ref{yfdmt} since $A$ therein does not satisfy the
second assumption for $L$ in \eqref{below} below.

\begin{theorem}
\label{yfdmt}
Let $A$ be an $m$-$\alpha$-sectorial operator in a Hilbert space $\sH$ with $\alpha \in [0,\frac \pi 2)$, let
\[
   T(z):=\exp(-zA), \quad z\in\cS\big(\frac \pi2\!-\!\alpha\big),
\]
be the holomorphic contractive semigroup generated by $-A$, and let
$L$ be 
a closed densely defined operator such that
\begin{equation}
\label{below}
   \ker L=\{0\}, \quad \dom L^* \cap\dom A^{1/2}_R=\{0\},
\end{equation}
\vspace{-1mm}Then
\begin{equation}
\label{OmegaTheta}
   \Omega(z):=L^*(I+T(z))L, \quad \Theta(z):=L^*(I+T(z))^{-1}L, \quad z\in\cS\big(\frac \pi2\!-\!\alpha\big),
\end{equation}
form holomorphic families of type {\rm (B)} and, if $\dom A\cap\dom A^*=\{0\}$, then
\begin{equation}\label{hth3a}
   \dom\Omega(\xi)\cap\dom\Omega(\mu)^*=\dom\Theta(\xi)\cap\dom\Theta(\mu)^*=\{0\},
	 \quad  \xi,\mu\!\in\!\cS\big(\frac \pi2\!-\!\alpha\big),
\vspace{-1mm}	
\end{equation}
\vspace{-1mm} and
\begin{equation}\label{hth4}
   \bigcap\limits_{z\in\cS(\frac \pi2-\alpha)}\!\!\!\dom\Omega(z)=\{0\}, \quad
   \bigcap\limits_{z\in\cS(\frac\pi 2-\alpha)}\!\!\!\dom\Theta(z)=\{0\}.
\end{equation}
\end{theorem}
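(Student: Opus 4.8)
The plan is to follow the strategy of the proof of Theorem \ref{holsemgr}, replacing $A$ by the closed operator $L$ and using the hypotheses \eqref{below} in place of $\ker A=\{0\}$. First I would collect the facts about the semigroup already available from the proof of Theorem \ref{holsemgr}: for every $z\in\cS(\frac{\pi}{2}-\alpha)$ the bounded operator $I+T(z)$ and its inverse $(I+T(z))^{-1}$ are coercive and $m$-$(\alpha+|\arg z|)$-sectorial, $\ker T(z)=\ker T(z)^*=\{0\}$ by \eqref{kernel-sgr}, and $\ran T(z)\subset\dom A$, $\ran T(z)^*\subset\dom A^*$ by \eqref{ran-sgr}. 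Since the operator domains of the $m$-sectorial operators $A$ and $A^*$ are both contained in the common form domain $\cD[\sa]=\dom A_R^{1/2}$ (the adjoint form $\sa^*$, which is the form of $A^*$, has the same domain and real part as $\sa$), this yields
\[
   \ran T(z)\subset\dom A_R^{1/2}, \qquad \ran T(z)^*\subset\dom A_R^{1/2}, \qquad z\in\cS(\tfrac{\pi}{2}-\alpha),
\]
which together with the second hypothesis in \eqref{below} will drive the whole argument.

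To establish the type~(B) property I would introduce on $\dom L$ the forms $\omega(z)[f,g]:=((I+T(z))Lf,Lg)$ and $\theta(z)[f,g]:=((I+T(z))^{-1}Lf,Lg)$. The form $(Lf,Lg)$ on $\dom L$ is closed since $L$ is closed, and coercivity together with boundedness of $I+T(z)$ and $(I+T(z))^{-1}$ make $\RE\omega(z)$ and $\RE\theta(z)$ comparable to $\|Lf\|^2$; hence $\omega(z)$, $\theta(z)$ are closed sectorial forms, holomorphic in $z$ because $z\mapsto T(z)$ and $z\mapsto(I+T(z))^{-1}$ are. The first representation theorem then identifies the associated $m$-sectorial operators with $\Omega(z)$, $\Theta(z)$ in \eqref{OmegaTheta} and gives
\[
   \dom\Omega(z)=\{f\in\dom L:(I+T(z))Lf\in\dom L^*\}, \qquad \dom\Omega(z)^*=\{f\in\dom L:(I+T(z)^*)Lf\in\dom L^*\},
\]
and analogously for $\Theta(z)$ with $(I+T(z))^{-1}$, $(I+T(z)^*)^{-1}$.

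For \eqref{hth3a} with $\Omega$, take $f\in\dom\Omega(\xi)\cap\dom\Omega(\mu)^*$; subtracting the two membership conditions gives $(T(\xi)-T(\mu)^*)Lf\in\dom L^*$, while the inclusions above give $(T(\xi)-T(\mu)^*)Lf\in\dom A_R^{1/2}$, so \eqref{below} forces $T(\xi)Lf=T(\mu)^*Lf$. This common vector lies in $\dom A\cap\dom A^*=\{0\}$, whence $T(\xi)Lf=0$, and injectivity of $T(\xi)$ and of $L$ give $f=0$. For $\Theta$ I would set $g=(I+T(\xi))^{-1}Lf$ and $h=(I+T(\mu)^*)^{-1}Lf$, both in $\dom L^*$, and use $(I+T(\xi))g=Lf=(I+T(\mu)^*)h$ to get $g-h=T(\mu)^*h-T(\xi)g\in\dom L^*\cap\dom A_R^{1/2}=\{0\}$; then $g=h$ and $T(\xi)g=T(\mu)^*g\in\dom A\cap\dom A^*=\{0\}$, so $g=0$ and $Lf=(I+T(\xi))g=0$, i.e.\ $f=0$.

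For \eqref{hth4}, note first that $\dom A\cap\dom A^*=\{0\}$ and \eqref{kernel-accr} give $\ker A=\{0\}$. If $f\in\bigcap_z\dom\Omega(z)$, then for all $\xi,\mu$ one has $(T(\xi)-T(\mu))Lf\in\dom L^*\cap\dom A_R^{1/2}=\{0\}$, so $z\mapsto T(z)Lf$ is constant; letting $\mu\to0$ along $\dR_+\subset\cS(\frac{\pi}{2}-\alpha)$ and using strong continuity of the semigroup up to $0$ gives $T(\xi)Lf=Lf$, hence $Lf\in\dom A$ with $ALf=0$, so $Lf=0$ and $f=0$. The same constancy argument applied to $g(z):=(I+T(z))^{-1}Lf$ settles $\Theta$. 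I expect the main obstacle to be purely bookkeeping: pinning down the adjoint-form domains $\dom\Omega(z)^*$, $\dom\Theta(z)^*$ and justifying $\dom A,\dom A^*\subset\dom A_R^{1/2}$; once the two range inclusions into $\dom A_R^{1/2}$ are secured, hypothesis \eqref{below} does the essential work.
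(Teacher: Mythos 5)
Your proposal is correct and follows essentially the same route as the paper's proof: the form-theoretic construction of $\Omega(z)$, $\Theta(z)$ via the first representation theorem, the key inclusions $\ran T(z)\subset\dom A\subset\dom A_R^{1/2}$ and $\ran T(z)^*\subset\dom A^*\subset\dom A_R^{1/2}$, and the subtraction trick that uses $\dom L^*\cap\dom A_R^{1/2}=\{0\}$ to force equality and then $\dom A\cap\dom A^*=\{0\}$ together with $\ker T(z)=\ker L=\{0\}$ to force vanishing. The only (immaterial) deviation is the finishing step of \eqref{hth4}: the paper differentiates the constant function $z\mapsto T(z)Lf$ to get $AT(z)Lf=0$, whereas you let $z\to 0^+$ along $\dR_+$ and invoke the generator definition — both conclude via $\ker A\subset\dom A\cap\dom A^*=\{0\}$ from \eqref{kernel-accr}.
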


\begin{proof}
The sesquilinear forms
\begin{alignat*}{2}
\omega(z)[f,g]&:=\left((I+T(z))Lf,Lg\right), \quad && f,g\in\dom L,\\
\theta(z)[f,g]&:=\left((I+T(z))^{-1}Lf,Lg\right), \quad && f,g\in\dom L,
\end{alignat*}
are sectorial, closed and holomorphic for $z\in\cS(\frac \pi 2\!-\!\alpha)$.
Then $\Omega(z)$ and $\Theta(z)$ in \eqref{OmegaTheta} are the $m$-sectorial operators associated with the forms
$\omega(z)$ and $\theta(z)$, respectively, and hence they form holomorphic families of type (B).

The following properties of the domains of $m$-sectorial operators follow from the representations $A=A_R^{1/2}(I+\I G) A_R^{1/2}$,
$A^*=A_R^{1/2}(I-\I G) A_R^{1/2}$ where $A_R$ is the real part of $A$ and $G$ is a bounded selfadjoint operator, see
\cite[Thm.~VI.3.2]{MR1335452} and \eqref{intersection},
\begin{align}
   \hspace{-2mm}&\dom A\!\subset\!\dom A^{1/2}_R, \quad  \dom A^*\!\subset\!\dom A^{1/2}_R, \label{dom1}\\
   \hspace{-2mm}&\dom A\cap\dom A^*\!=\!\{0\} \iff \dom A\cap\dom A_R \!=\!\{0\} \iff \dom A^*\cap\dom A_R \!=\!\{0\}. \hspace{-2mm}
	\label{dom2}
\end{align}

First let $f\in\dom\Omega(\xi)\cap\dom\Omega(\mu)^*$. Then
\[
   (I+T(\xi))Lf\in\dom L^*, \quad  (I+T(\mu)^*)Lf\in\dom L^*
\]
and hence
\[
  T(\xi)Lf-T(\mu)^*Lf\in\dom L^*.
\]
Since $T(\xi)Lf\!\in\!\dom A\!\subset\!\dom A^{1/2}_R\!$, $T(\mu)^*Lf\!\in\!\dom A^*\!\!\subset\!\dom A^{1/2}_R$ by \eqref{ran-sgr}, \eqref{dom1},
we~obtain
\[
    T(\xi)Lf-T(\mu)^*Lf\in\dom A^{1/2}_R.
\]
Because $\dom L^* \cap \dom A^{1/2}_R\!=\!\{0\}$ by assumption, it follows that $ T(\xi)Lf\!-\!T(\mu)^*Lf\!=\!0$.
More\-over,  $T(\xi)Lf\in\dom A$,  $T(\mu)^*Lf\in\dom A^*$ by \eqref{ran-sgr} and thus the assumption
$\dom A\cap\dom A^*=\{0\}$ yields $T(\xi)Lf=0$, $T(\mu)^*Lf=0$. Because $\ker T(z)=\{0\}$ and $\ker L=\{0\}$ by \eqref{kernel-sgr} and assumption,
$f=0$ follows.

Now let $h\in\dom\Theta(\xi)\cap\dom\Theta(\mu)^*$. Then
\[
   \psi:=(I+T(\xi))^{-1}Lf\in\dom L^*, \quad  \phi:=(I+T(\mu)^*)^{-1}Lf\in\dom L^*.
\]
It follows that $(I+T(\xi))\psi=Lf=(I+T(\mu)^*)\phi$ and hence, again by \eqref{ran-sgr}, \eqref{dom1},
\[
   \dom A_R^{1/2} \ni T(\xi)\psi-T(\mu)^*\phi=\phi-\psi\in\dom L^*.
\]
Since $\dom L^* \cap \dom A^{1/2}_R=\{0\}$ by assumption, we conclude that
$\phi\!=\!\psi$ and $T(\xi)\psi\!=\!T(\mu)^*\psi\in\dom A\cap\dom A^* \!=\! \{0\}$, and $f\!=\!0$ follows in the same way as above,
which proves~\eqref{hth3a}.

Next let $f\in \bigcap_{z\in\cS(\frac \pi 2-\alpha)}\dom\Omega(z)$ or,  \vspace{-1.5mm} equivalently,
\[
   (I+T(z))Lf\in\dom L^*, \quad z\!\in\!\cS\left(\frac \pi 2\!-\!\alpha\right).
\vspace{-1mm}
\]
By \eqref{ran-sgr}, \eqref{dom1}, this implies \vspace{-1mm} that
\[
  \dom A_R^{1/2} \ni(T(z_1)-T(z_2))Lf\in\dom L^*, \quad z_1, z_2\!\in\!\cS\big(\frac \pi 2\!-\!\alpha\big), \, z_1 \ne z_2.
\vspace{-1mm}
\]
Since $\dom L^* \cap \dom A^{1/2}_R=\{0\}$ by assumption, we conclude that $T(z)Lf =: g$ is constant for all
$z\!\in\!\cS\big(\frac \pi 2\!-\!\alpha\big)$ or, \vspace{-1mm} equivalently,
\[
   0=\cfrac{{\rm d} T(z)Lf}{{\rm d}z}=-AT(z)Lf, \quad z\!\in\!\cS\left(\frac \pi 2\!-\!\alpha\right).
\]
By \eqref{kernel-accr}, it follows that $T(z)Lf\in \ker A \subset \dom A \cap \dom A^* = \{0\}$ and hence
$f=0$ by \eqref{kernel-sgr} and because $\ker L = \{0\}$.

Finally, let $f\in \bigcap_{z\in\cS(\frac \pi 2-\alpha)}\dom\Theta(z)$. \vspace{-1mm} Then
\[
   h(z):=(I+T(z))^{-1}Lf\in\dom L^*, \quad z\!\in\!\cS\big(\frac \pi2\!-\!\alpha\big),
\]
and hence $(I+T(z))h(z)=Lf$ is constant for all $z\!\in\!\cS\left(\frac \pi2\!-\!\alpha\right)$. \vspace{-1mm}
Then
\[
  \dom A_R^{1/2} \ni T(z_1)h(z_1)-T(z_2)h(z_2)=h(z_2)-h(z_1)\in\dom L^*,
	\quad z_1, z_2\!\in\!\cS\big(\frac \pi 2\!-\!\alpha\big), \, z_1 \ne z_2.
\]
In a similar way as above, we conclude that $h(z)=h$, $T(z)h=g$ are constant for all $z\!\in\!\cS\big(\frac \pi2\!-\!\alpha\big)$
and $f=0$ which proves \eqref{hth4}.
\end{proof}

{\small
\bibliography{Arl_Tretter}{}
\bibliographystyle{acm}
}
\end{document}